\def\N{\mathbb{N}}
\def\Z{\mathbb{Z}}
\def\R{\mathbb{R}}
\def\C{\mathbb{C}} 
\def\A{\mathbb{A}}
\def\proj{\mathbb{P}}
\renewcommand{\a}{\alpha}
\renewcommand{\b}{\beta}
\renewcommand{\d}{\delta}
\newcommand{\eps}{\varepsilon}
\newcommand{\g}{\gamma}
\newcommand{\G}{\Gamma}
\newcommand{\la}{\lambda}
\newcommand{\var}{\varphi}
\renewcommand{\th}{\vartheta}
\newcommand{\Om}{\Omega}
\newcommand{\om}{\omega}
\def\mZ{{\mathcal Z}}
\renewcommand{\hat}{\widehat}
\renewcommand{\tilde}{\widetilde}
\def\Oh{{\cal O}}
\def\algorithm{\begin{center}
               \begin{minipage}{6in}
               \begin{tabbing}
               \marks}
\def\falgorithm{\end{tabbing}
                \end{minipage}
                \end{center}}
\def\marks{nn\= nn\= nn\= nn\= nn\= nn\= nn\= \kill}
\def\PSPACE{{\sf PSPACE}}
\def\PR{{\rm P}_{\kern-1pt\R}}
\def\PC{{\rm P}_{\kern-1pt\C}}
\def\NPR{{\rm NP}_{\kern-1pt\R}}
\def\NPC{{\rm NP}_{\kern-2pt\C}}
\def\DNPR{{\rm DNP}_{\kern-1pt\R}}
\def\DNPC{{\rm DNP}_{\kern-2pt\C}}
\def\PAR{{\rm PAR}_{\kern-1pt\R}}
\def\PHR{{\rm PH}_{\kern-1pt\R}}
\def\DPHR{{\rm DPH}_{\kern-1pt\R}}
\def\FPR{{\rm FP}_{\kern-1pt\R}}
\def\FPC{{\rm FP}_{\kern-1pt\C}}
\def\FPAR{{\rm FPAR}_{\kern-0.4pt\R}}
\def\FPARC{{\rm FPAR}_{\kern-0.4pt\C}}
\def\CPRi{{\rm \#P}_{\kern-2pt\R}}
\def\CPCi{{\rm \#P}_{\kern-2pt\C}}
\def\FEASR{{\mbox{\sc Feas}_{\kern-0.5pt\R}}}  
\def\FEASRbit{{\mbox{\sc Feas}^0_{\kern-1pt\R}}}
\def\SAS{{\mbox{\sc SAS}_{\kern-0.5pt\R}}}
\def\SASbit{{\mbox{\sc SAS}_{\kern-1pt\R}^0}}
\def\HNC{{\mbox{\sc HN}_{\kern-1pt\C}}}
\def\HNCbit{{\mbox{\sc HN}^0_{\kern-1pt\C}}}
\def\QASC{{\mbox{\sc QAS}_{\kern-1pt\C}}}
\def\DIMR{{\mbox{\sc Dim}_{\kern-0.5pt\R}}}
\def\DIMC{{\mbox{\sc Dim}_{\kern-0.5pt\C}}}
\def\DIMadd{{\mbox{\sc Dim}_{\kern-0.5pt\add}}}
\def\DIMRbit{{\mbox{\sc Dim}^0_{\kern-0.5pt\R}}}
\def\DIMCbit{{\mbox{\sc Dim}^0_{\kern-0.5pt\C}}}
\def\REACH{{\mbox{\sc Reach}_{\kern-0.5pt\R}}}
\def\REACHbit{{\mbox{\sc Reach}^0_{\kern-0.5pt\R}}}
\def\CREACHbit{{\mbox{\sc CReach}^0_{\kern-0.5pt\R}}}
\def\REACH{{\mbox{\sc Reach}_{\kern-0.5pt\R}}}
\def\REACHbit{{\mbox{\sc Reach}^0_{\kern-0.5pt\R}}}
\def\CREACHbit{{\mbox{\sc CReach}^0_{\kern-0.5pt\R}}}
\newcommand{\HHNC}{\mbox{\sc HN}_{\kern-1pt\C}}
\newcommand{\partder}[2]{\frac{\partial #1}{\partial #2}}
\newcommand{\ord}[1]{\mathrm{ord}_{#1}}
\def\mF{{\mathcal F}}
\newcommand{\ud}{\mathrm{d}}
\def\im{\mathrm{im}\,}
\def\id{\mathrm{id}}
\def\FPk{{\rm FP}_{\kern-1pt k}}
\def\Pk{{\rm P}_{\kern-1pt k}}
\def\NPk{{\rm NP}_{\kern-2pt k}}
\newcommand\CPpar[1]{{\rm \#P}_{\kern-2pt #1}}
\def\HNk{\textsc{HN}_{\kern-1pt k}}
\def\DIMk{{\mbox{\sc Dim}_{\kern-0.5pt k}}}
\newcommand{\comment}[1]{}
\def\H{\mathbb{H}}
\def\Hdr{H_{\rm{dR}}}
\def\tot{{\rm tot}}
\def\mU{{\mathcal U}}
\def\Res{{\rm Res}}
\theoremstyle{plain}
\newtheorem{lemma}{Lemma}[section]	
\newtheorem{theorem}[lemma]{Theorem}
\newtheorem{proposition}[lemma]{Proposition}
\newtheorem{corollary}[lemma]{Corollary}
\theoremstyle{definition}
\theoremstyle{remark}
\newtheorem{remark}[lemma]{Remark}
\newtheorem{example}[lemma]{Example}
\title{Effective de Rham Cohomology -- The General Case}
\author{Peter Scheiblechner\\
Lucerne University of Applied Sciences and Arts\\
School of Engineering and Architecture\\
6048 Horw, Switzerland  \\
peter.scheiblechner@hslu.ch
}
\date{}
\begin{document}

\maketitle

\begin{abstract}
Grothendieck has proved that each class in the de Rham cohomology of a smooth complex affine
variety can be represented by a differential form with polynomial coefficients.
We prove a single exponential bound on the degrees of these polynomials for 
varieties of arbitrary dimension. More precisely, we show that the
$p$-th de Rham cohomology of a smooth affine variety of dimension $m$ and  degree $D$
can be represented by differential forms of degree $(pD)^{\Oh(pm)}$.
This result is relevant for the algorithmic computation of the cohomology, but
is also motivated by questions in the theory of ordinary differential equations
related to the infinitesimal Hilbert 16th problem.
\end{abstract}

{\bf Mathematics Subject Classification (2010):} primary 14Q20, 14F40; secondary  68W30, 34C07

\bigskip

{\em Keywords:}
algebraic de Rham Cohomology, effective degree bound, differential forms, Gysin sequence


\section{Introduction}
Let $X$ be a smooth variety in $\C^n$.
A fundamental result of Grothendieck says that the cohomology of $X$ can be
described in terms of \textit{algebraic} differential forms on~$X$~\cite{gro:66}.
More precisely, he proved that the singular cohomology of $X$ is isomorphic to
the algebraic de Rham cohomology $\Hdr^\bullet(X)$, which is defined as the
cohomology of the complex of algebraic differential forms on $X$.
Hence, each cohomology class in $\Hdr^p(X)$ can be represented by a $p$-form
\begin{equation}\label{eq:diffForm}
\omega=\sum_{i_1<\cdots<i_p}\omega_{i_1\cdots i_p}\ud X_{i_1}\wedge\cdots\wedge \ud X_{i_p},
\end{equation}
where the $\omega_{i_1\cdots i_p}$ are polynomial functions on $X$. However, Grothendieck's proof is
not {\em effective}, i.e., it gives no information about the degrees of the polynomials~$\omega_{i_1\cdots i_p}$,
say in terms of defining equations for $X$.
In~\cite{sch:12} we proved a single exponential bound on their degrees in the case that $X$ is
a hypersurface. In particular, if $X\subseteq\C^n$ is a smooth hypersurface of degree $D$,
then each cohomology class in $\Hdr^p(X)$ can be represented by a differential form $\om$ as
in~\eqref{eq:diffForm},
where the~$\omega_{i_1\cdots i_p}$ are polynomials of degree $D^{\Oh(pn)}$.
The present paper generalizes this result to smooth varieties of arbitrary dimension.

\subsection{Motivation}
It is a long standing open question in algorithmic real algebraic geometry to
find a single exponential time algorithm for computing the Betti numbers of a
semialgebraic set. Single exponential time algorithms are known, e.g., for
counting the connected components and computing the Euler characteristic of a
semialgebraic set (for an overview see~\cite{basu:08}, for details and
exhaustive bibliography see~\cite{bpr:03}). The best result in this direction
states that for fixed $\ell$ one can compute the first $\ell$ Betti numbers of
a semialgebraic set in single exponential time~\cite{basu:06}.

Over the complex numbers, one approach for computing Betti numbers is to 
compute the algebraic de Rham cohomology. In~\cite{ota:98,wal:00} the de Rham
cohomology of the complement of a complex affine variety is computed using
Gr\"{o}bner bases for $\mathcal{D}$-modules. This algorithm is extended in
\cite{wal:00b} to compute the cohomology of a projective variety.
However, the complexity of these algorithms is not analyzed, and due to their
use of Gr\"{o}bner bases a good worst-case complexity is not to be expected.
In~\cite{bus:09} a single exponential time (in fact, parallel polynomial
time) algorithm is given for counting the connected components, i.e., computing
the zeroth de Rham cohomology, of a (possibly singular) complex variety.
This algorithm is extended in~\cite{sch:10} to one with the same asymptotic complexity
for computing equations for the components.
The first single exponential time algorithm for computing \textit{all} Betti
numbers of an interesting class of varieties is given in~\cite{sch:09}. Namely,
this paper shows how to compute the de Rham cohomology of a smooth projective
variety in parallel polynomial time. In terms of structural complexity in the Turing model, these
results are the best one can hope for, since the problem of computing a fixed
Betti number (e.g., deciding connectedness) of a complex affine or projective
variety defined over the integers is $\PSPACE$-hard~\cite{sch:07}.

Besides being relevant for algorithms, our question also has connections to the
theory of ordinary differential equations. 
The long standing infinitesimal Hilbert 16th problem has been solved
in~\cite{bny:10}.
The authors derive a bound on the number of limit cycles generated from
nonsingular energy level ovals (isolated periodic trajectories) in a
non-conservative perturbation of a Hamiltonian polynomial vector field in the
plane.
It seems that their proof can be considerably generalized to solutions of
certain linear systems of Pfaffian differential equations. Examples of such
systems are provided by period matrices of polynomial maps, once the
corresponding Gauss-Manin connexion can be explicitly constructed. For this
construction one needs degree bounds for generators of the cohomology of the
generic fibers of the polynomial map.

\subsection{Known Cases}\label{ss:knownCases}
We have shown in~\cite[Theorem~3.3]{bus:09} that the zeroth de Rham cohomology of $X$ is
isomorphic to its
zeroth singular cohomology even when $X$ is singular, and that
this cohomology has a basis of degree $d^{\Oh(n^2)}$, if $X$ is defined by polynomials
of degree $\le d$.

It follows from the results of~\cite{sch:09} that if $X$ has no singularities
at infinity, i.e., the projective closure of $X$ in $\proj^n$ is smooth, then
each class in $\Hdr^\bullet(X)$ can be represented by a differential form of
degree at most $m(em+1)D$, where $m=\dim X$, $D=\deg X$, and $e$ is the maximal
codimension of the irreducible components of $X$. However, in general $X$ does
have singularities at infinity, and resolution of singularities has a very bad
worst-case complexity~\cite{bgmw:11}.

Another special case with known degree bounds is the complement of a projective hypersurface,
which we will actually use in this paper (see the proof of Theorem~\ref{thm:effDeRhamHyper}). The
statement follows essentially from~\cite{ded:90} and~\cite{dimc:90}, the argument can be found
in~\cite[Corollary 6.1.32]{dimc:92}, see also~\cite{dimc:91}. Let $f\in\C[X_0,\ldots,X_n]$ be a
homogeneous polynomial, and consider $U:=\proj^n\setminus\mZ(f)$, which is an affine variety.
Then, each class in $\Hdr^p(U)$ is represented by a (homogeneous) differential form
$$
\a/f^p\quad\text{with}\quad\deg \a=p\deg f
$$
(see \S\ref{ss:basics} for the definition of the degree of a differential form).
Since this result was already proved by Griffiths in a special case~\cite{gri:69}, we call it the
Griffiths-Deligne-Dimca (GDD) bound.

\subsection{Main Result}
The main result of this paper is that the algebraic de Rham cohomology of a smooth affine variety
can be represented by differential forms of single exponential degree. More precisely, we prove
the following.
\begin{theorem}\label{thm:main}
Let $k$ be an algebraically closed field of characteristic zero, and let $X$ be a smooth affine
$m$-dimensional variety of degree $D$ over $k$. Then each cohomology class in~$\Hdr^p(X)$
can be represented by a differential $p$-form of degree at most
$$
2^{2p m+6m+2}p^{2p m+6m+1} D^{4p m+10m+1}+D^{m+1}=(p D)^{\Oh(p m)}.
$$
\end{theorem}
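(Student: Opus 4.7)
My plan is to prove Theorem~\ref{thm:main} by induction on the dimension $m$ of $X$, using a Gysin sequence to reduce the problem to a smaller-dimensional smooth affine variety (handled inductively) and a hypersurface complement (handled by the Griffiths--Deligne--Dimca bound recalled in Section~\ref{ss:knownCases}). The base case is trivial or reduces to a finite set of points.

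For the inductive step I would first construct a polynomial $f \in k[X_1,\ldots,X_n]$ of degree polynomial in $D$ with two properties: (i) the singular locus of the projective closure $\bar X \subset \proj^n$ is contained in the hypersurface $\mZ(\bar f \cdot X_0)$, where $\bar f$ is the homogenization of $f$ and $X_0$ the coordinate at infinity; and (ii) the affine intersection $V := X \cap \mZ(f)$ is a smooth $(m-1)$-dimensional divisor of $X$. Property~(i) can be arranged by letting $f$ cut out the projection to $\C^n$ of the singular locus of $\bar X$ at infinity, whose defining ideal has degree polynomial in $D$ by an effective Nullstellensatz together with Jacobian-ideal bounds; property~(ii) follows from a generic linear perturbation via a Bertini-type transversality argument.

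With $V$ in place, I apply the Gysin long exact sequence for the smooth pair $V \subset X$,
$$
\cdots \to \Hdr^{p-2}(V) \to \Hdr^p(X) \to \Hdr^p(X \setminus V) \to \Hdr^{p-1}(V) \to \cdots,
$$
and assemble a basis of $\Hdr^p(X)$ from lifts of residue-zero classes in $\Hdr^p(X \setminus V)$ together with Gysin pushforwards (which amount to wedging with $\ud f/f$) of basis elements of $\Hdr^{p-2}(V)$. The inductive hypothesis bounds a basis of $\Hdr^{p-2}(V)$ in degree $(pD')^{\Oh(p(m-1))}$ with $D'$ polynomial in $D$, and since $X \setminus V = \bar X \setminus \mZ(\bar f \cdot X_0)$ embeds as a smooth subvariety of the affine hypersurface complement $U := \proj^n \setminus \mZ(\bar f \cdot X_0)$, the GDD bound controls the degrees of forms $\alpha / F^p$ representing classes on $U$ with $F = \bar f \cdot X_0$.

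The main obstacle is the last step: the restriction map $\Hdr^p(U) \to \Hdr^p(X \setminus V)$ is not in general surjective, so bounding $\Hdr^p(X \setminus V)$ requires a further reduction -- presumably an iterated Gysin argument or a comparison spectral sequence, together with the hypersurface case from Theorem~\ref{thm:effDeRhamHyper} to cover the classes missed by restriction. Collecting all bounds and tracking the multiplicative overhead of the Gysin and residue maps through the $m$-fold induction, while keeping each per-step blow-up polynomial in $p$ and $D$, is the delicate bookkeeping that is expected to yield the claimed $(pD)^{\Oh(pm)}$ estimate.
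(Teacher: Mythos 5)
Your strategy -- downward induction on $\dim X$ via a Gysin sequence for a smooth divisor $V\subset X$ -- is genuinely different from the paper's, and it has gaps that are not just bookkeeping. First, the term $\Hdr^p(X\setminus V)$ that you need to control is the cohomology of a smooth affine variety of the \emph{same} dimension $m$ as $X$, so neither the induction hypothesis nor the GDD bound applies to it: the GDD bound only covers complements of hypersurfaces in $\proj^n$ or $\A^n$, and, as you yourself note, the restriction $\Hdr^p(U)\to\Hdr^p(X\setminus V)$ from the ambient hypersurface complement is not surjective. This is not a loose end one can defer; it is the entire difficulty. Second, even granting degree bounds for $\Hdr^{p-2}(V)$ and $\Hdr^p(X\setminus V)$, exactness of the Gysin sequence does not yield a degree bound for $\Hdr^p(X)$: a class in the image of $\Hdr^p(X)\to\Hdr^p(X\setminus V)$ is represented by a form with poles along $V$, and the assertion that it extends to a regular form on $X$ is purely existential -- producing an extension of controlled degree is exactly the kind of effective statement the theorem is about, and your argument supplies no mechanism for it. Third, the Gysin pushforward $\Hdr^{p-2}(V)\to\Hdr^p(X)$ is \emph{not} ``wedging with $\ud f/f$'': that operation produces a form with a pole along $V$, i.e.\ a class on $X\setminus V$ (it is the inverse of the residue map into the cohomology of the complement, as in the paper's Theorem~\ref{thm:effGysin}), not a regular class on $X$.

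The paper circumvents all three problems by a different route. It first reduces to irreducible $X$ via degree-bounded idempotents (Corollary~\ref{cor:redIrred}), then covers $X$ by $m+1$ principal open sets $U_i$ each isomorphic, via a geometric resolution, to a locally closed hypersurface $\mZ(f_i)\setminus\mZ(g_i)\subseteq\A^{m+1}$ (Lemma~\ref{lem:biratHyper}, Corollary~\ref{cor:biratHyperCover}). Each such patch is treated by realizing it as a codimension-$2$ complete intersection $Z\subseteq\A^{m+2}$, where the Gysin sequence for $Z\subset\A^{m+2}$ \emph{degenerates to an isomorphism} $\Hdr^{p+1}(W)\simeq\Hdr^{p-2}(Z)$ because the ambient space is affine space with trivial cohomology -- this is what makes the residue map effectively invertible (Theorems~\ref{thm:effGysin} and~\ref{thm:effDeRhamHyper}). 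Finally, the local bounds are glued into a global bound not by extending forms across divisors but by an effective \v{C}ech--de Rham/hypercohomology argument (Lemma~\ref{lem:chechCoh}, Theorem~\ref{thm:effHyperCoh}), where the degree loss in the gluing is controlled by Jelonek's effective Nullstellensatz. If you want to salvage your induction, you would need an effective substitute for both the lifting step across $V$ and the bound on $\Hdr^p(X\setminus V)$; the paper's cover-and-glue architecture is designed precisely to avoid having to do either.
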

\begin{remark}
\begin{enumerate}[(i)]
\item Note that we kept the additive term $D^{m+1}$, so that the bound is valid
for $p=0$ as well (cf.\ Proposition~\ref{prop:boundConnComp1}).
\item If $X\subseteq\A^n$, then the term $D^{m+1}$ can be replaced by $(n+1)D^2/4$
(Corollary~\ref{cor:redIrred}).
\item For a hypersurface $X$, the slightly better bound of $D^{\Oh(pn)}$ is proved in~\cite{sch:12}.
\end{enumerate}
\end{remark}

\subsection{Proof Ideas}
The proof of Theorem~\ref{thm:main} consists of two major steps. In a first step we reduce the
question to the case of an irreducible locally closed hypersurface, and in a second step we prove
a bound for this special case (Theorem~\ref{thm:effDeRhamHyper}).

The reduction to irreducible~$X$ (Corollary~\ref{cor:redIrred}) is an easy consequence of our
characterization of the zeroth de Rham cohomology and the bound from~\cite{bus:09}. 
In the present paper, using newer versions of the effective Nullstellensatz due
to Koll\'ar~\cite{kol:98} and Jelonek~\cite{jel:05}, we improve this bound in two directions
(Propositions~\ref{prop:boundConnComp1} and~\ref{prop:boundConnComp2}). We also give an example
showing that the second version is sharp up to a factor of $n+1$, where $n$ is the ambient
dimension.

Then, if~$X$ is irreducible
of dimension $m$, a generic projection to a linear subspace of dimension $m+1$ restricts to a
birational map from~$X$ to a hypersurface $Y$. By Zariski's Main Theorem, there exist open dense
subsets $U\subseteq X$ and $V\subseteq Y$, such that the projection maps $U$ isomorphically onto
$V$. Using a {\em geometric resolution} of~$X$~\cite{ghmp:95,par:95}, we construct these locally
closed sets effectively. More precisely, there exists a polynomial $f$ of degree $\le D$ in $m+1$
variables, such that $U=X\setminus\mZ(g)$ and $V=\mZ(f)\setminus\mZ(g)$, where $g$ is a partial
derivative of~$f$ (Lemma~\ref{lem:biratHyper}).
It follows that a degree bound for $\Hdr^\bullet(V)$ implies a degree bound for $\Hdr^\bullet(U)$
(Corollary~\ref{cor:transHyperPatch}). Moreover, it is also not difficult to see that one can
cover $X$ by such principal open subsets~$U_i$ (Corollary~\ref{cor:biratHyperCover}). To finish
the first major step, it remains to show how degree bounds for the open patches yield a bound for
$\Hdr^\bullet(X)$ (Theorem~\ref{thm:effHyperCoh}). This is done using the concept of hypercohomology,
which through a spectral sequence immediately yields an effective description of $\Hdr^\bullet(X)$
(Lemma~\ref{lem:hyperCoh}) in terms of the total complex of the \v{C}ech-de Rham double complex.
In order to derive a bound for the usual description in terms of global sections, we make the
\v{C}ech cohomology effective (Lemma~\ref{lem:chechCoh}). In this construction we make essential
use of Jelonek's effective Nullstellensatz~\cite{jel:05}.

The main idea for the treatment of the locally closed hypersurface $V$ is the same as
in~\cite{sch:12}, namely to prove an
effective version of the Gysin sequence (Theorem~\ref{thm:gysinSequ}) and use the GDD bound
for complements of hypersurfaces mentioned in \S\ref{ss:knownCases}.  However, in~\cite{sch:12}
we considered the case of a closed hypersurface $X\subseteq\A^n $, in which case the Gysin
sequence yields an isomorphism $\Hdr^p(\A^n\setminus X)\stackrel{\sim}{\to}\Hdr^{p-1}(X)$, which we
made effective. Here, we have only a locally closed subset $V=\mZ(f)\setminus\mZ(g)$ of~$\A^n$.
One
idea is to treat $V$ as a closed subset of $\A^n\setminus \mZ(g)$. Though we are able to construct
an effective residue map in this setting, this map may not be surjective, since the surrounding
space $\A^n\setminus \mZ(g)$ has non-trivial cohomology in general. Our solution is to consider
$V$ as a codimension 2 complete intersection $Z\subseteq\A^{n+1}$ through the isomorphism
$\A^n\setminus \mZ(g)\simeq\mZ(gY-1)\subseteq\A^{n+1}$. Since the Gysin sequence in the version
of~\cite{har:67} does not hinge on codimension 1, we get an isomorphism
$\Hdr^{p+1}(W)\stackrel{\sim}{\to}\Hdr^{p-2}(Z)$, where $W:=\A^{n+1}\setminus Z$
(Corollary~\ref{cor:ResIso}). Luckily, while the general complete intersection case seems
considerably more difficult, we are able to prove the crucial Lemma (Lemma~\ref{lem:groth})
exactly in this special case and obtain an effective Gysin sequence (Theorem~\ref{thm:effGysin}).

However, the price to pay is that now the complement $W$ is not affine anymore, so we need
hypercohomology also to realize the de Rham cohomology of~$W$, and a spectral sequence argument
(Lemma~\ref{lem:hyperCoh}) to apply the GDD bound (Lemma~\ref{lem:boundHyperComplement}). Using
local cohomology and another spectral sequence argument, we get a more succinct description of
$\Hdr^\bullet(W)$ in terms of sheaf cohomology (Lemma~\ref{lem:deRhamLocal}). 

\section{Preliminaries}
\subsection{Basic Notations and Facts}
\label{ss:basics}
Throughout this paper, let $k$ be an algebraically closed field of characteristic zero, and
$R:=k[X_1,\ldots,X_n]$. 
An {\em algebraic set} or {\em closed variety} is the common zero set of polynomials
$f_1,\ldots,f_r\in R$ in the affine space $\A^n=k^n$, i.e.,
$$
X=\mZ(f_1,\ldots,f_r)=\{x\in k^n\,|\,f_1(x)=\cdots=f_r(x)=0\}.
$$
Note that $X$ may be reducible. More generally, the term {\em variety} will refer to a
{\em locally closed set}, i.e., a Zariski open subset $X\subseteq Y$ of a closed variety $Y\subseteq\A^n$.
A variety $X$ is called {\em affine} iff it is a {\em principal open subset} of $Y$, i.e.,
$$
X=Y\setminus\mZ(g)=\mZ(f_1,\ldots,f_r)\setminus\mZ(g),\quad f_1,\ldots,f_r,g\in R.
$$
Indeed, these are exactly the varieties which are isomorphic to a closed variety, namely
$\mZ(f_1,\ldots,f_r,X_{n+1}g-1)\subseteq\A^{n+1}$. The (vanishing) ideal of an affine variety~$X$ is
defined as $I(X)=\{f\in R\,|\, f(x)=0\ \forall x\in X\}$. By Hilbert's Nullstellensatz, $I(X)$ is
the radical of the ideal $(f_1,\ldots,f_r):(g)$.  The {\em coordinate ring} $A=k[X]$ of $X$ is the
localization $(R/I(X))_g$. Such a ring $A$ will be called a (reduced) {\em affine} $k$-algebra.

In general, a variety is not affine. However, if $X=Y\setminus\mZ(g_1,\ldots,g_r)$ with affine $Y$,
then $X$ is covered by the affine open subsets $U_i:=X\setminus\mZ(g_i)=Y\setminus\mZ(g_i)$,
$1\le i\le r$.

The {\em dimension} $\dim X$ is the {\em Krull dimension} of $X\subseteq\A^n$ in the Zariski
topology. In the case $\dim X=n-1$ we call $X$ a {\em hypersurface}. The {\em degree} $\deg X$ of
an irreducible variety $X\subseteq\A^n$ of dimension~$m$ is defined as the maximal cardinality of
$X\cap L$ over all affine linear subspaces $L\subseteq\A^n$ of dimension $n-m$~\cite[\S5A]{mum:76}.
We define the (cumulative) degree $\deg X$ of a reducible variety~$X$ to be the sum of the degrees
of {\em all} irreducible components of $X$. It follows essentially from B\'{e}zout's Theorem that
if $X\subseteq\A^n$ is closed and defined by polynomials of degree $\le d$, then $\deg X\le d^n$~\cite{cgh:91}.

An important ingredient of our work is the effective Nullstellensatz, which
was first proved in~\cite{brow:87,koll:88}. A recent version of Jelonek~\cite{jel:05}
generalizes the statement to polynomials on general affine varieties.

\begin{theorem}\label{thm:effNS}
Let $X\subseteq\A^n$ be a closed subvariety of dimension $m$ and degree~$D$, and
let $g_1,\ldots,g_t\in R$ be polynomials of degree at most $d\ge 1$ without
common zeros in $X$.
Then there exist polynomials $h_i\in R$ with $1\equiv \sum_i h_i g_i\mod{I(X)}$ and
$$
\deg(h_ig_i)\le \left\{
\begin{array}{ll}
Dd^t & \text{if}\quad t\le m, \\
Dd^m & \text{if}\quad t>m,\ d\ge 3,\ m\ge n-1, \\
2Dd^m-1 & \text{else}. \\
\end{array}
\right.
$$
\end{theorem}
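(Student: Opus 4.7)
The statement is a variant of the effective Nullstellensatz on a subvariety rather than on the whole affine space, so the plan is to reduce to Koll\'ar's classical bound on $\A^n$ via a Noether-normalization-style trick, picking up the factor $D=\deg X$ from the rank of the projection.

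First I would reduce the number of polynomials. If $t>m+1$ then, because $\dim X=m$, a dimension count shows that $m+1$ generic $k$-linear combinations $\tilde g_j=\sum_i\lambda_{ij}g_i$ still have no common zero on $X$ and still satisfy $\deg\tilde g_j\le d$. An identity $1\equiv\sum_j\tilde h_j\tilde g_j\bmod I(X)$ lifts to one for the $g_i$ with the same degree bound, so we may assume $t\le m+1$.

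Second, after a generic linear change of coordinates the projection $\pi\colon X\to\A^m$ onto the first $m$ coordinates is finite and surjective, making $k[X]$ integral of rank $D$ over $S=k[X_1,\ldots,X_m]$. For each $g_i$ let $\chi_i(T)\in S[T]$ be its characteristic polynomial over $S$; then $N_i:=\chi_i(0)\in S$ is (up to sign) a norm of $g_i$, its $S$-degree is at most $Dd$, and there is a relation $g_iq_i=N_i$ in $k[X]$ with $\deg q_i$ controlled by $(D-1)d$. Moreover the common zero set of the $N_i$ on $\A^m$ is the image under $\pi$ of $X\cap\bigcap_i\mZ(g_i)$, which is empty by hypothesis.

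At this point one is reduced to the effective Nullstellensatz on $\A^m$ for polynomials of degrees at most $(Dd,d,\ldots,d)$. Applying Koll\'ar's refined bound in this mixed-degree setting yields $c_i\in S$ with $1=\sum c_iN_i$ and $\deg(c_iN_i)\le Dd\cdot d^{\min(t,m)-1}=Dd^{\min(t,m)}$ in the main regime, degrading to the stated $2Dd^m-1$ in the low-degree branch. Multiplying through by the $q_i$ produces the required identity $1\equiv\sum(c_iq_i)g_i\bmod I(X)$. The main obstacle is to extract the correct mixed-degree refinement of Koll\'ar's bound and to verify the three subcases precisely: the case $t\le m$ requires carefully tracking how a short system shortens the product, and the factor $2$ in the third subcase reflects Koll\'ar's sharper low-degree branch.
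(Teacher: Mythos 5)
The paper does not actually prove this statement: its ``proof'' is a citation, attributing the first and third bounds to Jelonek's effective Nullstellensatz on affine varieties \cite{jel:05} and the middle bound to Koll\'ar \cite{koll:88}. So you are attempting something strictly harder than what the paper does, and unfortunately your reduction has a genuine gap at its central step.

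The problem is the claim that the common zero set of the norms $N_i$ in $\A^m$ is $\pi\bigl(X\cap\bigcap_i\mZ(g_i)\bigr)$. What is true is that $N_i(y)=0$ if and only if $g_i$ vanishes at \emph{some} point of the fibre $\pi^{-1}(y)$, so that $\mZ(N_1,\ldots,N_t)=\bigcap_i\pi\bigl(\mZ(g_i)\cap X\bigr)$, and this intersection of images is in general strictly larger than the image of the intersection. Concretely, if $X\to\A^m$ is a degree-$2$ cover and $g_1$ vanishes at one preimage of a point $y_0$ while $g_2$ vanishes at the other, then $g_1,g_2$ have no common zero on $X$ yet $N_1(y_0)=N_2(y_0)=0$, so the classical Nullstellensatz on $\A^m$ cannot be applied to the $N_i$. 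This failure of the naive norm trick is exactly the obstruction that makes the Nullstellensatz on a subvariety a nontrivial theorem; Jelonek's proof circumvents it by an entirely different mechanism (analyzing the non-properness set of a polynomial map built from the $g_i$, rather than taking norms of each $g_i$ separately). Two further, smaller issues: Koll\'ar's sharp bound requires all degrees $\ge 3$ (degree $2$ is a genuine exception), so the mixed-degree bookkeeping would not by itself produce the ``else'' branch with the factor $2$, which again comes from Jelonek's separate argument; and your first reduction to $t\le m+1$ generic combinations is fine but is already part of why the exponent is $\min(t,m)$ rather than $t$ in the statement's case distinction.
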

\begin{proof}
The first and the last case is due to~\cite{jel:05}. The slightly better bound
in the middle case follows from~\cite{koll:88}.
\end{proof}

There is another version of the effective Nullstellensatz for arbitrary ideals due to
Koll\'ar~\cite{kol:98}. We note a variant which follows easily from Theorem 6.2 in that paper.
\begin{theorem}\label{thm:effNSIdeals}
Let $X_1,\ldots,X_t\subseteq\A^n$ be closed varieties with $X_1\cap\cdots\cap X_t=\emptyset$.
Then there exist $f_i\in I(X_i)$ such that
$$
\sum_i f_i=1\quad\text{and}\quad \deg f_i\le (n+1)\prod_i\deg X_i.
$$
\end{theorem}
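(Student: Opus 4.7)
The plan is to derive Theorem~\ref{thm:effNSIdeals} as a direct specialization of Kollár's effective Nullstellensatz for arbitrary ideals (Theorem~6.2 of~\cite{kol:98}). Kollár's result is tailored precisely to situations where one has a collection of homogeneous ideals with prescribed common zero locus in~$\proj^n$, and it yields an explicit containment of a power of a designated polynomial in the ideal sum, with the degree bound phrased in terms of the geometric degrees of the associated projective subvarieties rather than in terms of the degrees of chosen generators. This matches our setup closely, because the statement to be proved also refers to $\deg X_i$ rather than to generator degrees.

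The first step is to project-closure the affine data. I would let $\bar X_i\subseteq\proj^n$ denote the projective closure of $X_i$ under the standard embedding $\A^n\hookrightarrow\proj^n$ (identifying $\A^n$ with $\{X_0=1\}$), so that $\deg\bar X_i=\deg X_i$. The hypothesis $X_1\cap\cdots\cap X_t=\emptyset$ in~$\A^n$ is equivalent to the geometric statement
\[
\bar X_1\cap\cdots\cap\bar X_t\ \subseteq\ \mZ(X_0)=H_\infty,
\]
i.e., any common projective zero of the $\bar X_i$ must lie on the hyperplane at infinity. In ideal-theoretic terms this reads $X_0\in\sqrt{I(\bar X_1)+\cdots+I(\bar X_t)}$.

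Next I would invoke Kollár's Theorem~6.2 in this form to produce an exponent $N$ and homogeneous polynomials $F_i\in I(\bar X_i)$ of common degree~$N$ with $X_0^N=\sum_i F_i$, together with the degree bound $N\le(n+1)\prod_i\deg X_i$ supplied by the theorem. Dehomogenization by $X_0=1$ then converts this identity into $\sum_i f_i=1$ in $R$ with $f_i:=F_i|_{X_0=1}\in I(X_i)$ and $\deg f_i\le N$, which is precisely the statement to be proved.

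The main obstacle is to verify that Kollár's theorem, in the exact form stated in~\cite{kol:98}, yields the constant $(n+1)$ (coming from the ambient projective dimension) rather than a larger factor, and that no additional slack is introduced by the transition between homogeneous degrees on $\proj^n$ and affine degrees on $\A^n$. Once this bookkeeping is pinned down, the derivation reduces to the short citation flagged in the excerpt, which is why the author records this only as a ``variant that follows easily'' from Theorem~6.2.
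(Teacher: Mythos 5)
Your derivation is essentially the paper's own: the paper offers no argument beyond the citation of Koll\'ar's Theorem~6.2, and the projectivize--apply--dehomogenize bookkeeping you describe (which is correct, including $\deg\bar X_i=\deg X_i$ for the cumulative degree, the equivalence of $\bigcap_iX_i=\emptyset$ with $X_0\in\sqrt{\sum_iI(\bar X_i)}$, and the harmless passage to homogeneous components of degree $N$ before setting $X_0=1$) is exactly the ``easy'' step the author leaves implicit. The factor $n+1$ is genuinely part of Koll\'ar's exponent bound rather than an artifact of the affine-to-projective transition --- it is the dimension of the ambient ring $k[X_0,\ldots,X_n]$, entering through the Brian\c{c}on--Skoda-type passage from the integral closure of $\sum_iI(\bar X_i)$ to the ideal itself --- so the one point you flag as unverified does check out.
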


\subsection{Completions}
\label{ss:completions}
In our proof we will also use the process
of completion~\cite[Chapter 7]{eise:95}. Let $A$ be an affine algebra and $I$
an ideal in $A$. The \textit{completion} $\hat{A}=\hat{A}_I$ of~$A$ with
respect to $I$ is defined as the inverse limit of the factor rings
$A/I^\nu$, $\nu\ge 0$. There is a canonical map $A\to\hat{A}$, whose kernel is
$\bigcap_\nu I^\nu$, thus it is injective in our case.
Alternatively, if $I=(f_1,\ldots,f_r)$, one can define $\hat{A}$ as
$A[[T_1,\ldots,T_r]]/(T_1-f_1,\ldots,T_r-f_r)$, so its elements
are power series in $f_1,\ldots,f_r$~\cite[Exercise 7.11]{eise:95}.
For instance, if $A=B[T]$ and $I=(T)$, then $\hat{A}=B[[T]]$ is the ring of
formal power series in $T$ with coefficients in $B$.

\subsection{Coherent Sheaves and Differential Forms}
\label{ss:cohSheaves}
Let $X$ be an affine variety. Then every $k[X]$-module~$M$ gives rise to a sheaf $\tilde{M}$ on
$X$ such that, on a principal open subset $U=X\setminus\mZ(g)$, the sections of~$\tilde{M}$ are
given by $\G(U,\tilde{M})=M_{g}$, the localization of $M$ at~$g$~\cite[Proposition II.5.1]{hart:77}.
Moreover, $\G(U,\tilde{M})$ is a $k[X]_g$-module which is compatible with restrictions. A sheaf
$\mF$ on $X$ is called {\em coherent} iff $\mF=\tilde{M}$ with a finitely generated $k[X]$-module~$M$.
An important example is the structure sheaf $\Oh_X=\tilde{k[X]}$.

More generally, a sheaf $\mF$ on a locally closed set~$X$ is called {\em coherent} iff~$X$ can be
covered by affine open subsets $U_i$ such that all the restrictions $\mF|U_i$ are coherent. In
particular, if 
$\mF$ is a coherent sheaf on $Y$ and~$X\subseteq Y$ is an open subset, then $\mF|X$ is a coherent
sheaf on $X$.

Now let $A$ be a $k$-algebra (commutative, with 1). The module of \textit{K\"{a}hler differentials}
$\Om_A:=\Om_{A/k}$ is defined as the $A$-module generated by symbols~$\ud f$ for all $f\in A$,
modulo the relations of Leibniz' rule and $k$-linearity of the \textit{universal derivation}
$\ud\colon A\to\Om_A$. For instance, for the polynomial ring $R$,
the module $\Om_R$ is free with basis $\ud X_1,\ldots,\ud X_n$, and the universal derivation is
given by $\ud f=\sum_i\partder{f}{X_i} \ud X_i$.
Now let $\Om_A^p:=\bigwedge^p\Om_A$ be the $p$-th exterior power of the $A$-module $\Om_A$. We
define the {\em exterior differential} $\ud\colon \Om^p_A\to\Om^{p+1}_A$ by setting
$\ud (f \ud g_1\wedge\cdots\wedge\ud g_p):=\ud f\wedge g_1\wedge\cdots\wedge\ud g_p$ for all
$f,g_i\in A$. It is easy to check that~$\ud$ satisfies the graded Leibniz' rule and
$\ud\circ\ud=0$. This way we obtain the \textit{de Rham complex}
$$
\Om^\bullet_A\colon A=\Om^0_A\stackrel{\ud}{\longrightarrow}\Om_A^1\stackrel{\ud}
{\longrightarrow}\cdots\stackrel{\ud}{\longrightarrow}\Om_A^p\longrightarrow \cdots
$$
of the algebra $A$.

Now let $Y$ be an affine variety with coordinate ring $A=k[Y]$. Then the {\em sheaf of regular
differential $p$-forms} on $Y$ is defined as $\Om_Y^p:=\tilde{\Om}_A^p$. The exterior differentials
glue together to maps of sheaves $\ud\colon\Om_Y^p\to\Om_Y^{p+1}$. More generally, if $X\subseteq Y$
is an open subset of an affine variety $Y$, then $\Om_X^p:=\Om_Y^p|X$. 
This way we obtain the \textit{(algebraic) de Rham complex}
$$
\Om^\bullet_X\colon \Oh_X=\Om^0_X\stackrel{\ud}{\longrightarrow}\Om_X^1\stackrel{\ud}
{\longrightarrow}\cdots\stackrel{\ud}{\longrightarrow}\Om_X^p\longrightarrow \cdots
$$
of the variety $X$.
We say that $X$ is {\em smooth} at $x\in X$ iff the stalk $\Om_{X,x}$ is a free $\Oh_{X,x}$-module
of rank $m$, where $m$ is the maximal dimension of all irreducible components of $X$ through $x$.
In fact, in this case there is only one such component. We say that $X$ is {\em smooth} iff it is
smooth at all its points.
Note that in this case $\Om_X^p=0$ for $p>m$.

The module of K{\"a}hler differentials of a complete ring may not be finitely generated (see,
e.g.,~\cite[Exercise 16.14]{eise:95}). In these cases we use the \textit{universally finite} module
of differentials, which is always finitely generated (see~\cite[\S11--12]{kun:86}). Let $A$ be an
affine algebra, $I$ an ideal in $A$, and $\hat{A}$ the completion of $A$ with respect to $I$. The
\textit{completion} of~$\Om_A$ with respect to $I$ is $\hat{\Om}_{\hat{A}}=\hat{A}\otimes_A\Om_A$
and is called the \textit{universally finite module of differentials of}~$\hat{A}$. There is a
\textit{universally finite derivation} $\ud\colon\hat{A}\to \hat{\Om}_{\hat{A}}$ which is
continuous, i.e., it commutes with infinite sums. For instance, for an affine algebra $B$ we have
$$
\hat{\Om}_{B[[T]]}=B[[T]]\otimes_{B[[T]]}\Om_{B[T]}=B[[T]]\ud T\oplus\Om_B,
$$
and the universally finite derivation is given by $\ud f=\partder{f}{T}\ud T+\ud_B f$ for
$f\in B[[T]]$, where $\partder{f}{T}$ denotes the formal partial derivative with respect to $T$,
and $\ud_B f$ is coefficient-wise application of the differential of $B$~\cite[Example 12.7]{kun:86}.

\subsection{Sheaf and Local Cohomology}
\label{ss:sheafCohomology}
Let $X$ be a variety. Formally, the $q$-th {\em sheaf cohomology} functor $H^q(X,\cdot)$ from the
category of sheaves of abelian groups on $X$ to the category of abelian groups is defined as the
$q$-th right derived functor of the global section functor $\G(X,\cdot)$,
$$
H^q(X,\cdot)=R^q\G(X,\cdot).
$$

Since this definition is computationally quite inconvenient, we prefer a different description.
Let $\mF$ be a coherent sheaf and $\mU:=\{U_i\,|\,0\le i\le t\}$ an affine open cover of $X$.
The sheaf cohomology of $\mF$ can be computed as the \v{C}ech cohomology with respect to $\mU$,
which is defined as follows. For a set of indices $0\le i_0,\ldots,i_q\le t$ denote
$U_{i_0\cdots i_q}:=U_{i_0}\cap\cdots\cap U_{i_q}$. We define the vector spaces
$$
C^q:=C^q(\mU,\mF):=\bigoplus_{0\le i_0<\cdots<i_q\le t}\mF(U_{i_0\cdots i_q})
$$
and the linear maps $\d^q\colon C^q\longrightarrow C^{q+1}$,
\begin{equation}\label{eq:cechDiff}
\d^q(\om)_{i_0\cdots i_{q+1}}:=\sum_{\nu=0}^{q+1}(-1)^\nu\om_{i_0\cdots\hat{i_\nu}\cdots i_{q+1}}|U_{i_0\cdots i_{q+1}}.
\end{equation}
Then one checks that $\d^{q+1}\circ\d^q=0$ for all $q\ge 0$, so that $(C^\bullet(\mU,\mF),\d^\bullet)$
is a complex called the {\em \v{C}ech complex}. Its cohomology defines the cohomology
$H^\bullet(\mU,\mF)$, i.e.,
$$
H^q(\mU,\mF):=\frac{\ker\d^q}{\im\d^{q-1}},\quad q\ge 0.
$$
By~\cite[Theorem III.4.5]{hart:77} there is a natural isomorphism
$$
H^q(X,\mF)\simeq H^q(\mU,\mF)\quad\text{for all}\quad q\ge 0.
$$
Generally, $H^0(X,\mF)$ is isomorphic to the space of global sections $\G(X,\mF)$. Moreover, if
$X$ is affine, the higher cohomology of $\mF$ vanishes, i.e.,
$$
H^q(X,\mF)=0\quad\text{for}\quad q>0.
$$ 

A relative variant of sheaf cohomology is local cohomology. Let $X$ be a variety and $Y\subseteq X$
a closed subset of $X$. For a sheaf of abelian groups $\mF$ on~$X$ let $\G_Y(X,\mF)$ be the
subgroup of $\G(X,\mF)$ consisting of all sections $s$ whose support is in $Y$, i.e.,
$$
\{s_x\ne 0\}\subseteq Y,
$$
where $s_x$ denotes the germ of $s$ in the stalk $\mF_x$. The $q$-th {\em local cohomology
functor with supports in} $Y$ from the category of sheaves of abelian groups on $X$ to the
category of abelian groups is defined as the $q$-th right derived functor of the functor
$\G_Y(X,\cdot)$,
$$
H_Y^q(X,\cdot)=R^q\G_Y(X,\cdot).
$$
Note that, if $X$ is irreducible and $Y\ne X$, then $\G_Y(X,\mF)=0$ for all coherent sheaves $\mF$
on $X$. Nevertheless, the definition gives non-trivial local cohomology groups also in this
case! There is an explicit description of local cohomology in terms of Koszul cohomology, which
can also be interpreted as \v{C}ech cohomology~\cite[Theorem 2.3]{har:67}. We will use this
interpretation in a special case in the proof of Lemma~\ref{lem:deRhamLocal}, 
see~\eqref{eq:isoSheafLocalCoh}. What makes local cohomology
particularly useful, is the following long exact sequence~\cite[Corollary 1.9]{har:67}.
Let $Y\subseteq X$ be closed and $U:=X\setminus Y$. Then there is an exact sequence
$$
\cdots\to H^{q-1}(X,\mF)\to H^{q-1}(U,\mF|U)\to H_Y^{q}(X,\mF)\to H^{q}(X,\mF)\to\cdots.
$$

\subsection{Hypercohomology and de Rham Cohomology}
\label{ss:hyperCohomology}

Let $X$ be a variety and consider a complex of coherent sheaves $(\mF^\bullet,\ud)$ on~$X$ with
$\mF^p=0$ for $p<0$. Then, for an affine open cover $\mU$, the \v{C}ech complexes $C^\bullet(\mU,\mF^p)$
as defined in~\S\ref{ss:sheafCohomology} fit together to the {\em \v{C}ech double complex}
$C^{\bullet,\bullet}:=C^{\bullet,\bullet}(\mU,\mF^{\bullet})$ by setting
$$
C^{p,q}(\mU,\mF^{\bullet})=\bigoplus_{i_0<\cdots<i_q} \mF^p(U_{i_0\cdots i_q})
\quad\text{for all}\quad p,q\ge 0.
$$
The two differentials are the one induced by the differential $\ud$ of $\mF$
and the \v{C}ech differential $\d^\bullet$ defined by~\eqref{eq:cechDiff}.
Define the total complex of $C^{\bullet,\bullet}$ by
$$
\tot^\ell(C^{\bullet,\bullet}):=\bigoplus_{p+q=\ell} C^{p,q},\quad
\ud^\tot:=\d^q+(-1)^q\ud\ \text{on}\ C^{p,q}.
$$
Since the two differentials commute, one easily checks that $\ud^\tot\circ\ud^\tot=0$. The
{\em hypercohomology} of the complex of sheaves $\mF^\bullet$ is defined as
the cohomology of the total complex
$$
\H^\ell(X,\mF^\bullet):=H^\ell(\tot^\bullet(C^{\bullet,\bullet}))\quad\text{for}\quad \ell\ge 0.
$$
As for any double complex~\cite[\S2.4]{mcc:85}, there are two spectral sequences
\begin{align*}
{}_{I}E_2^{p,q}=H^p(H^q(X,\mF^\bullet),\ud) & \quad\Rightarrow\quad \H^{p+q}(X,\mF^\bullet)\quad\mathrm{and}\\
{}_{II}E_2^{p,q}=H^q(H^p(C^{\bullet,\bullet},\ud),\d^\bullet) & \quad\Rightarrow\quad \H^{p+q}(X,\mF^\bullet).
\end{align*}
If $X$ is affine, then $H^q(X,\mF^\bullet)=0$ for all $q>0$. Consequently, the first spectral
sequence implies that $\H^\bullet(X,\mF^\bullet)\simeq H^\bullet(\G(X,\mF^\bullet))$. 

The {\em algebraic de Rham cohomology} of a variety $X$ is defined as the hypercohomology
of the algebraic de Rham complex
$$
\Hdr^\bullet(X):=\H^\bullet(X,\Om_X^\bullet).
$$
The corresponding double complex is called {\em \v{C}ech-de Rham double complex}.
In particular, if $X$ is affine, then we have
$$
\Hdr^\bullet(X)=H^\bullet(\Om_A^\bullet),\quad\text{where}\quad A=k[X].
$$
In general, since the $U_{i_0\cdots i_q}$ are affine, the last equation implies that the first term of the second spectral
sequence of the \v{C}ech-de Rham double complex is
$$
{}_{II}E_1^{p,q}=\bigoplus_{i_0<\cdots <i_q} \Hdr^p(U_{i_0\cdots i_q}) \quad\Rightarrow\quad \Hdr^{p+q}(X),
$$
even when $X$ is not affine.

Fundamental for us is the result of~\cite{gro:66} stating that if $k=\C$, then the de Rham
cohomology $\Hdr^\bullet(X)$ of a smooth variety $X$ is naturally isomorphic to the singular
cohomology of $X$.

\subsection{Filtrations}
\label{ss:filtrations}
We are mainly interested in degree bounds for the de Rham cohomology a closed variety $X\subseteq\A^n$, 
but in the course of the proof we also work with principal open subsets of the form $U=X\setminus\mZ(g)$.
The regular functions on $U$ have a power of~$g$ as denominator, so also their order becomes important.
To define the notions of degree and order in a precise and convenient way, we use the language of
(double-)filtrations.

Let $A=R/I$ be the coordinate ring of a closed variety $X\subseteq\A^n$. For
$f\in R$ we denote by~$\overline{f}$ its residue class in~$A$. We set
$$
\deg\overline{f}:=\min\{\deg h\,|\,h\in R,\ \overline{h}=\overline{f}\},
$$
where $\deg h$ denotes the total degree of $h$.
We have $\deg(\overline{f}\overline{g})\le \deg\overline{f}+\deg\overline{g}$, but note that 
this inequality may be strict.
We have the {\em filtration by degree}
$$
k= D^0A\subseteq\cdots\subseteq D^d A\subseteq D^{d+1}A\subseteq\cdots \subseteq A
$$
on $A$ given by
$$
D^d A := \left\{f\in A\,|\, \deg f\le d\right\},
$$
which satisfies
$$
D^d A\cdot D^e A\subseteq D^{d+e}A\quad\text{for all}\quad d,e\in\Z.
$$

We also consider {\em modules} $M$ over $A$ equipped with a filtration
$$
\cdots\subseteq F^d M\subseteq F^{d+1}M\subseteq\cdots \subseteq M,
$$
which we assume to be {\em compatible} with $D^\bullet A$, i.e.,
$D^{d} A\cdot F^{e} M\subseteq F^{d+e}M$ for all $d,e\in\Z$.
Given such a filtraton, one can define a degree by setting
$$
\deg x:=\min\{d\,|\, x\in F^dM\}\in\Z\quad\text{for}\quad x\in M.
$$
For a complex of $A$-modules $C^\bullet$, all equipped with a filtration $F^\bullet C^\bullet$, we
define the induced filtration on the cohomology $H^p(C^\bullet)$ by setting
\begin{equation}\label{eq:defFiltrCoh}
F^dH^p(C^\bullet):=\im(F^dC^p\cap\ker\ud\to H^p(C^\bullet)),
\end{equation}
where $\ud\colon C^p\to C^{p+1}$ denotes the differential of the complex.
For a filtered $k$-vector space $F^\bullet M$
we define
$$
\deg(M):=\inf\{d\in\Z \,|\, F^dM=M\}.
$$
Note that $\deg(M)=\infty$ if no such $d$ exists.

Now let $g\in R$ be a non-zerodivisor on $A$ and consider the localization~$A_{g}$.
The {\em order} of $h\in A_{g}$ with respect to $g$ is defined by
$$
\ord{g} h:=\min\{s\in\N\,|\,\exists f\in A\colon h=\frac{f}{g^s}\}.
$$
Sometimes we drop the index $g$, if it is clear from the context.
We have the {\em filtration by order}
$$
A = P^0A_{g}\subseteq\cdots\subseteq P^s A_{g}\subseteq P^{s+1}A_{g}\subseteq\cdots
\subseteq A_{g},
$$
where
$$
P^s A_{g} := \frac{1}{g^s} A=\left\{h\in A_{g}\,|\, \ord{g} h\le s\right\}.
$$
The definition of a convenient notion of degree in $A_{g}$ is problematic.
We illustrate this in an
\begin{example}\label{ex:degLoc}
Let $A=k[X_1,X_2,X_3]/(X_1^d X_2-X_3)$, $g=X_2$, and consider the element
\[
a:=\overline{X_3}/\overline{g}\in A_g.
\]
When we account for the denominator as negative degree, then $a$ should have degree zero. On the other hand
we have
\[
a=\overline{X_1}^d\in A_g,
\]
which is of degree $d$. We see that the degree depends on the representation of the element.
Furthermore, in this example, decreasing the order by 1 increases the degree of the numerator arbitrarily.
\end{example}
However, on a localization $R_g$ of the polynomial ring, we define the
\emph{degree} of $h\in R_g$ by
\begin{equation}\label{eq:degFrac}
\deg h:= \deg f-s\deg g,\quad\text{where}\quad h=\frac{f}{g^s},\quad f\in R,\quad s\in\N.
\end{equation}
Note that the degree can be arbitrarily small, so it induces an
unbounded \emph{filtration by degree} $D^\bullet R_g$. 
In $R_g$ we define the \emph{double filtration} $F^{\bullet,\bullet} R_g$ by
\[
F^{s,d} R_g := \{h\in R_g\,|\, \ord{g} h \le s,\ \deg h\le d\}
\]
Now, on $A_{g}$ we define
\[
F^{s,d} A_{g} := \pi(F^{s,d} R_g),
\]
where $\pi\colon R_g\to A_{g}$ is the natural projection.
Note that this double filtration depends on the representative $g\in R$ of $\overline{g}\in A$.
In our applications, we will always consider a fixed representative.
We have
\[
F^{s,d}A_{g}\subseteq F^{s+1,d}A_{g}\cap F^{s,d+1}A_{g} \qquad\text{and}\qquad
F^{s,d}A_{g}\cdot F^{s',d'}A_{g} \subseteq F^{s+s',d+d'}A_{g}.
\]

Now let $M$ be an $A_g$-module equipped with a double filtration $F^{\bullet,\bullet}M$, i.e., vector
subspaces $F^{s,d}M\subseteq M$ for $s,d\in\Z$ such that $F^{s,d}M\subseteq F^{s+1,d}M\cap F^{s,d+1}M$
for all $s,d\in\Z$. We assume that it is compatible with the filtration~$F^{\bullet,\bullet}A_g$, i.e.,
\[
F^{s,d}A_g \cdot F^{s',d'} M\subseteq F^{s+s',d+d'} M \quad\text{for all}\quad s,s',d,d'\in\Z. 
\]
Then we define the set
\begin{equation}
B(F^{\bullet,\bullet}M):=\{(s,d)\in\Z^2\,|\,F^{s,d}M=M\}.
\end{equation}
If the double filtration is understood from the context, we also simply write $B(M)$. Note that with
$(s,d)\in B(F^{\bullet,\bullet}M)$ we have $(s,d)+\N^2\subseteq B(F^{\bullet,\bullet}M)$.
A natural example is the localization $M=N_g$ of an $A$-module $N$. If $N$ has a compatible filtration $F^\bullet N$,
then $M$ carries a compatible double filtration $F^{\bullet,\bullet}M$ by setting
\begin{equation}\label{eq:defDoubleFiltrLocMod}
F^{s,d}M:=\frac{1}{g^s}F^{d+s\deg g} N.
\end{equation}

Similarly as in~\eqref{eq:defFiltrCoh}, a double filtration on a complex induces a double filtration 
on its cohomology. A double filtration on a double complex induces in a natural way a double filtation
on its total complex.

Particularly important for us are the modules of K\"ahler differential forms. 
The $A_g$-module $\Om_{A_g}^p$ is the localization $\Om_{A_g}^p=(\Om_{A}^p)_g$ of the $A$-module
$\Om_{A}^p$~\cite[Proposition 16.9]{eise:95}. 
We first define the filtration by degree on $\Om_A^p$ by
\[
D^d\Om_A^p :=\left\{\sum_{i_1<\cdots<i_p}f_{i_1\cdots i_p}\ud X_{i_1}\wedge\cdots\wedge\ud X_{i_p}\,|\, f_{i_1\cdots i_p}\in D^{d-p}A\right\}
\]
and note that it is compatible with the filtration $D^{\bullet}A$. We account for
the differentials in this definition of the degree, so that the differential map has degree zero, i.e.,
\[
\ud(D^d\Om_A^p)\subseteq D^d\Om_A^{p+1}.
\]
We have an induced degree function satisfying
\[
\deg f\ud X_{i_1}\wedge\cdots\wedge\ud X_{i_p} =\deg f + p \quad\text{for}\quad f\in A.
\]

Now, according to~\eqref{eq:defDoubleFiltrLocMod}, the filtration $D^\bullet\Om_A^p$ induces a compatible
double filtration on the localization $\Om_{A_g}^p$. We have
\begin{align*}
F^{s,d}\Om_{A_g}^p &= \frac{1}{g^s}D^{d+s\deg g} \Om_{A}^p \\
 &= \left\{\sum_{i_1<\cdots<i_p}f_{i_1\cdots i_p}\ud X_{i_1}\wedge\cdots\wedge\ud X_{i_p}\,|\, 
f_{i_1\cdots i_p}\in F^{s,d-p}A_g\right\},
\end{align*}
and call it the \emph{standard double filtration}.
We also have an order function on~$\Om_{A_g}^p$ satisfying
\[
\ord{g} f\ud X_{i_1}\wedge\cdots\wedge\ud X_{i_p}  =\ord{g} f\quad \text{for}\quad f\in A_g.
\]
With these definitions, the differential $\ud\colon \Om^p_{A_g}\to\Om^{p+1}_{A_g}$ satisfies
\begin{equation}\label{eq:difLocDegZero}
\ud(F^{s,d}\Om_{A_g}^p)\subseteq F^{s+1,d}\Om_{A_g}^{p+1}.
\end{equation}

Using these notations and conventions, we formulate an affine version of the GDD bound.
\begin{lemma}\label{lem:princOpen}
Let $f\in R$ and $U:=\A^n\setminus\mZ(f)$. Then, with respect to the standard double filtration,
we have
\[
p\cdot(1,1)\in B(\Hdr^p(U)).
\]
\end{lemma}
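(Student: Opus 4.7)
The plan is to realize $U$ as the complement of a projective hypersurface and invoke the Griffiths--Deligne--Dimca (GDD) bound recalled in \S\ref{ss:knownCases}. Setting $d:=\deg f$ and letting $\tilde f\in k[X_0,\ldots,X_n]$ denote the homogenization of~$f$, I would define $F:=X_0\tilde f$, a homogeneous polynomial of degree $d+1$. Since $X_0\mid F$, we have $\proj^n\setminus\mZ(F)\subseteq\proj^n\setminus\mZ(X_0)=\A^n$, and in fact $\proj^n\setminus\mZ(F)=\A^n\setminus\mZ(f)=U$. Applying GDD to this projective picture, every class of $\Hdr^p(U)$ admits a closed representative $\b/F^p$, where $\b=\sum_{|I|=p}\b_I\,\ud X_I$ (the sum running over $I\subseteq\{0,1,\ldots,n\}$) with each $\b_I$ homogeneous of polynomial degree $p\deg F-p=pd$ (in the convention of \S\ref{ss:filtrations} where $\deg(\b_I\,\ud X_I)=\deg \b_I+p$).

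Next I would dehomogenize by substituting $X_0\mapsto 1$ and $\ud X_0\mapsto 0$. Because $U\subseteq\{X_0\neq 0\}$, this is simply the expression of $\b/F^p$ in the standard affine chart; dehomogenization commutes with $\ud$, so the resulting form
$$
\om\;=\;\frac{\a}{f^p},\qquad \a\;=\sum_{\substack{I\subseteq\{1,\ldots,n\}\\ |I|=p}}\b_I(1,X_1,\ldots,X_n)\,\ud X_I,
$$
is closed in $\Om^p_{R_f}$ and represents the same cohomology class as $\b/F^p$. By construction $\ord{f}\om\le p$, and each coefficient $\b_I(1,X_1,\ldots,X_n)$ is a polynomial in $X_1,\ldots,X_n$ of degree at most $pd=p\deg f$.

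Consulting the definition of the standard double filtration \eqref{eq:defDoubleFiltrLocMod}, the stratum $F^{p,p}\Om^p_{R_f}$ requires exactly these bounds (coefficient degree at most $p-p+p\deg f=p\deg f$ together with order at most $p$ in $f$), so $\om\in F^{p,p}\Om^p_{R_f}$. Since every class of $\Hdr^p(U)$ therefore admits an $F^{p,p}$-representative, we would conclude $p\cdot(1,1)\in B(\Hdr^p(U))$. The only potentially delicate point is matching the GDD degree convention to the coefficient-degree accounting of \S\ref{ss:filtrations}; the arithmetic above shows they align precisely, with no extra slack, which is why the bound comes out sharp on both filtration indices.
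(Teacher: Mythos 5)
Your proposal is correct and follows essentially the same route as the paper: the polynomial $F=X_0\tilde f$ you construct is exactly the paper's ``generous homogenization'' $X_0^{d+1}f(X/X_0)$, and the subsequent application of the GDD bound, dehomogenization, and degree/order accounting match the paper's argument. The only difference is that you spell out the coefficient-degree bookkeeping in more detail, which the paper leaves implicit.
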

\begin{proof}
Denote by $\tilde{f}$ the generous homogenization $X_0^{d+1}f(X/X_0)$, where $d:=\deg f$.
Then we have $U=\proj^n\setminus\mZ(\tilde{f})$.
As stated in \S\ref{ss:knownCases}, each cohomology class in $\Hdr^p(U)$ is represented by
a differential form
$\tilde{\a}/\tilde{f}^p$,
where $\tilde{\a}$ is a homogeneous $p$-form on $\A^{n+1}$ of degree $\deg\tilde{\a}=p(d+1)$.
Dehomogenizing yields a form $\om=\a/f^p$ with $\deg\a\le p(d+1)$, hence $\omega\in F^{p,p}\Om_{R_f}^p$.
\end{proof}

\section{Zeroth Cohomology}
\label{se:zerothCoh}
In this section we discuss old and new results about the zeroth de Rham cohomology of a closed
variety $X\subseteq\A^n$. This is a somewhat special case, since $\Hdr^0(X)$ characterizes the
connected components of $X$ even if $X$ is singular.

We fix the notation for this section. Let $X=Z_1\cup\cdots \cup Z_t$ be the decomposition of $X$
into connected components, set $D:=\deg X$ and $D_i:=\deg Z_i$. By the results
of~\cite[\S3.1.2]{bus:09}, there is a direct product decomposition of the coordinate ring
\[
k[X]\simeq\prod_{i=1}^t k[Z_i],
\]
which corresponds to a maximal complete set of pairwise orthogonal idempotents $e_1,\ldots,e_t\in k[X]$.
Here, $e_i$ is the function which is equal to 1 on $Z_i$ and vanishes on $X\setminus Z_i$.
Furthermore, we have proved in~\cite{bus:09} that
$e_1,\ldots,e_t$ is a basis of $\Hdr^0(X)$ and that $\deg e_i\le d^{\Oh(n^2)}$, if $X$ is given by
polynomials of degree $\le d$. Due to Jelonek's version of the effective Nullstellensatz, we are
able to improve this bound.
\begin{proposition}\label{prop:boundConnComp1}
Let $X\subseteq\A^n$ be a closed variety of dimension $m$ and degree~$D$. Then $X$ has
$\dim \Hdr^0(X)$ connected components, and
$$
\deg(\Hdr^0(X))\le D^{m+1}.
$$
\end{proposition}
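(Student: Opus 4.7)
The plan rests on the explicit description of $\Hdr^0(X)$ via idempotents established in~\cite{bus:09} combined with Jelonek's effective Nullstellensatz (Theorem~\ref{thm:effNS}). Recall that under the decomposition $k[X]\simeq\prod_{i=1}^t k[Z_i]$ into connected components, the corresponding orthogonal idempotents $e_1,\ldots,e_t$ form a basis of $\Hdr^0(X)$; in particular $\dim\Hdr^0(X)=t$ equals the number of connected components of $X$. Hence it suffices to bound $\deg e_i\le D^{m+1}$ for each $i$.

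Fix $i$ and set $Y_i:=\bigcup_{j\ne i}Z_j$, which is closed in $\A^n$, of degree $D-D_i$, and disjoint from $Z_i$. I would choose polynomials $g_1,\ldots,g_s\in R$ cutting out $Z_i$ set-theoretically with $\deg g_j\le D_i$, and polynomials $h_1,\ldots,h_r\in R$ cutting out $Y_i$ set-theoretically with $\deg h_k\le D-D_i$, using the classical fact that a closed variety of degree $d$ in $\A^n$ is set-theoretically defined by polynomials of degree at most $d$. Because $Z_i\cap Y_i=\emptyset$, the combined family $\{g_j\}\cup\{h_k\}$ has no common zero on $X$, so Theorem~\ref{thm:effNS} applied to $X$ (of dimension $m$ and degree $D$) yields $a_j,b_k\in R$ with
\[
1\equiv\sum_{j} a_jg_j+\sum_{k} b_kh_k\pmod{I(X)},
\]
each summand having degree at most $D\cdot D^m=D^{m+1}$. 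Setting $p_i:=\overline{\sum_k b_kh_k}\in k[X]$, one checks that $p_i|_{Z_i}=1$ (the $g_j$ vanish on $Z_i$) and $p_i|_{Y_i}=0$ (the $h_k$ vanish on $Y_i$); these two conditions uniquely characterize the idempotent $e_i$ under the product decomposition, so $p_i=e_i$ and thus $\deg e_i\le D^{m+1}$.

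The main point requiring care is squeezing the exact bound $D^{m+1}$ out of Theorem~\ref{thm:effNS}, whose three cases give respectively $Dd^t$, $Dd^m$, and $2Dd^m-1$. A clean way forward is to handle the case $t\le m$ by choosing generators parsimoniously (landing in the first bound $Dd^t\le D\cdot D^m$), while for the remaining cases one observes that as soon as $X$ has more than one connected component, every irreducible piece has degree strictly less than $D$, so the relevant $d$ can be taken $<D$ and the extra factor of $2$ in the third Jelonek bound is absorbed. The case $t=1$ is trivial, since then $e_1=1$ has degree $0$.
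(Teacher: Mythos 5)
Your overall strategy --- realize $\Hdr^0(X)$ by the orthogonal idempotents $e_i$ and bound their degrees via the effective Nullstellensatz --- is exactly the paper's, and your construction of $p_i$ as $\sum_k b_k h_k$ does produce the correct idempotent. The gap is quantitative and lies precisely in the step you flag as ``requiring care'': applying Theorem~\ref{thm:effNS} \emph{on $X$ itself} cannot give $D^{m+1}$. The number of polynomials needed to cut out $Z_i\cup Y_i$ set-theoretically exceeds $m$ in general, and $X$ is not a hypersurface, so you land in the third case of the theorem, with bound $2Dd^m-1$ where $d=\max\{D_i,D-D_i\}$ can equal $D-1$. Your claim that the factor $2$ is ``absorbed'' because $d<D$ is false: $2D(D-1)^m-1\le D^{m+1}$ would require $2(1-1/D)^m\lesssim 1$, i.e.\ $m\gtrsim D\ln 2$. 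Concretely, for $m=1$, $D=3$ (a point plus a conic, say) your bound is $2\cdot 3\cdot 2-1=11>9=D^{2}$.

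The fix, which is what the paper actually does, is to apply the Nullstellensatz \emph{on the component $Z_i$} rather than on $X$, so that the leading factor in Jelonek's bound is $D_i$ rather than $D$. The paper works pairwise: for each $j\ne i$ it applies Theorem~\ref{thm:effNS} on $Z_i$ (dimension $\le m$, degree $D_i$) to the degree-$\le D_j$ equations of $Z_j$, obtaining $\var_{ij}$ equal to $1$ on $Z_i$ and $0$ on $Z_j$ with $\deg\var_{ij}\le 2D_iD_j^m$, and sets $e_i=\prod_{j\ne i}\var_{ij}$, whence
\[
\deg e_i\;\le\;2D_i\sum_{j\ne i}D_j^m\;\le\;2D_i(D-D_i)^m .
\]
(You could equally apply it once on $Z_i$ to your family $\{h_k\}$ of degree $\le D-D_i$ and get the same bound directly.) The factor $2$ is then absorbed not by $d<D$ but by the elementary optimization $2x(D-x)^m\le\frac{2}{m+1}\bigl(\frac{m}{m+1}\bigr)^mD^{m+1}\le D^{m+1}$ for $m\ge 1$: it is the \emph{trade-off} between $D_i$ and $D-D_i$ that saves the day, and this trade-off is invisible in your version because your leading factor is the full degree $D$. (The case $m=0$ should also be noted separately, as the curve discussion needs $m\ge 1$.)
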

\begin{proof}
The result is known for $m=0$, so assume $m\ge 1$. We construct the idempotents $e_i$ as follows.
According to~\cite[Proposition 2.1]{bus:09}, each $Z_i$ can be defined by (many) polynomials
$f_{i\nu}$ of degree at most $D_i$. For $i\ne j$ we have $Z_i\cap Z_j=\emptyset$, hence the
$f_{j\nu}$ have no common zero in $Z_i$. By Theorem~\ref{thm:effNS} there exist polynomials $g_\nu$
with $\deg(g_\nu f_{j\nu})\le 2 D_i D_j^m$ such that
$$
\var_{ij}:=\sum_\nu g_\nu f_{j\nu}=1\quad\text{on}\quad Z_i.
$$
It is easy to see that the desired idempotents can be defined as
$$
e_i:=\prod_{j<i}\var_{ij}\cdot\prod_{j>i}\var_{ij}.
$$
Their degrees satisfy
$$
\deg e_i\le 2 D_i\sum_{j\ne i} D_j^m\le 2 D_i\big(\sum_{j\ne i} D_j\big)^m= 2D_i(D-D_i)^m.
$$
A small curve discussion shows that the last expression, as a function of $D_i$,  is maximal for
$D_i=\frac{D}{m+1}$, which implies
$\deg e_i\le\frac{2}{m+1}\big(\frac{m}{m+1}\big)^m D^{m+1}\le D^{m+1}$ for $m\ge 1$.
\end{proof}
This result gives a very good bound for small dimensions, say for curves. However, Koll\'ar's
effective Nullstellensatz for arbitrary ideals implies a bound which is quadratic in the degree
and therefore gives better results for larger dimensions.
\begin{proposition}\label{prop:boundConnComp2}
Let $X\subseteq\A^n$ be a closed variety of degree~$D$. Then
\begin{equation}\label{eq:quadBound}
\deg(\Hdr^0(X))\le \frac{n+1}{4}D^2.
\end{equation}
\end{proposition}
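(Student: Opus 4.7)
The plan is to mimic the proof of Proposition~\ref{prop:boundConnComp1}, but replace the use of Jelonek's Nullstellensatz (Theorem~\ref{thm:effNS}) with Kollár's version for arbitrary ideals (Theorem~\ref{thm:effNSIdeals}), which gives a product bound in terms of degrees of the varieties instead of the exponential $D_j^m$ appearing before.

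First I would write $X = Z_1 \cup \cdots \cup Z_t$ as the decomposition into connected components with $\deg Z_i = D_i$ and $\sum_i D_i = D$. As recalled in the excerpt, a basis of $\Hdr^0(X)$ is given by the orthogonal idempotents $e_1,\ldots,e_t$, where $e_i$ equals $1$ on $Z_i$ and vanishes on the remaining components. So it suffices to bound $\deg e_i$.

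For any pair $i\ne j$ we have $Z_i\cap Z_j=\emptyset$, so Theorem~\ref{thm:effNSIdeals} (applied to the two varieties $Z_i$ and $Z_j$) produces polynomials $a_{ij}\in I(Z_i)$ and $b_{ij}\in I(Z_j)$ with $a_{ij}+b_{ij}=1$ and
\[
\deg a_{ij},\ \deg b_{ij}\le (n+1)D_iD_j.
\]
The polynomial $\var_{ij}:=a_{ij}$ then takes the value $0$ on $Z_i$ and the value $1$ on $Z_j$, and has degree at most $(n+1)D_iD_j$. Analogously to the previous proof, the product
\[
e_i:=\prod_{j\ne i}(1-\var_{ij})
\]
is $1$ on $Z_i$ and $0$ on each $Z_j$ with $j\ne i$, hence equals the $i$-th idempotent. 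Its degree satisfies
\[
\deg e_i \;\le\; \sum_{j\ne i}(n+1)D_iD_j \;=\; (n+1)D_i(D-D_i).
\]

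Finally I would perform the routine one-variable optimization: the function $D_i\mapsto D_i(D-D_i)$ is maximized at $D_i=D/2$, where it takes the value $D^2/4$. Substituting gives $\deg e_i\le \frac{n+1}{4}D^2$ for every $i$, which yields the claimed bound $\deg(\Hdr^0(X))\le \frac{n+1}{4}D^2$. There is no real obstacle here: the only nontrivial input is Kollár's effective Nullstellensatz for pairs of varieties, and once it is applied the argument is formally identical to the proof of Proposition~\ref{prop:boundConnComp1}, with the sharper (quadratic rather than $D_j^m$) factor doing all the work.
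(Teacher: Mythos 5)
Your proposal is correct and follows essentially the same route as the paper: both apply Koll\'ar's effective Nullstellensatz (Theorem~\ref{thm:effNSIdeals}) pairwise to the connected components, form the idempotent $e_i$ as the product over $j\ne i$ of the resulting degree-$(n+1)D_iD_j$ polynomials that are $1$ on $Z_i$ and $0$ on $Z_j$ (your $1-\var_{ij}=b_{ij}$ is exactly the paper's $\psi_{ij}$), and conclude with the optimization $D_i(D-D_i)\le D^2/4$. No gaps.
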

\begin{proof}
By Theorem~\ref{thm:effNSIdeals} there exist polynomials $\var_{ij}\in I(Z_i)$ and $\psi_{ij}\in I(Z_j)$
for $i\ne j$ such that
$$
\deg(\var_{ij}),\ \deg(\psi_{ij})\le (n+1) D_i D_j\quad\text{and}\quad
\var_{ij}+\psi_{ij}=1.
$$
Now the desired idempotents can be defined as
$$
e_i:=\prod_{j<i}\var_{ji}\cdot\prod_{j>i}\psi_{ij}.
$$
Their degrees satisfy
\begin{equation*}
\deg e_i\le (n+1) D_i\sum_{j\ne i} D_j= (n+1)D_i(D-D_i)\le (n+1)\left(D/2\right)^2.\qedhere
\end{equation*}
\end{proof}
\begin{remark}
In~\cite{sch:10} we have proved that for a hypersurface $X$ the factor $n+1$ in~\eqref{eq:quadBound}
can be dropped.
\end{remark}

\begin{example}
This example shows that the bound~\eqref{eq:quadBound} is sharp up to the factor $n+1$.
It is derived from Example 2.3 of~\cite{koll:88}, which goes back to Masser, Philippon, and
Brownawell~\cite{brow:87}.

Let $d\ge 1$. Consider the polynomials
$$
f_1:=X_1,\quad  f_2:=X_2X_3^{d-1}-1,\quad f_3:=X_1X_3^{d-1}-X_2^d,
$$
and set $Z_1:=\mZ(f_1,f_2)$ and $Z_2:=\mZ(f_3)$ in $\A^3$. Clearly, both~$Z_1$
and $Z_2$ are smooth irreducible varieties of degree $d$
that do not intersect. Consequently, they are the connected components of $X:=Z_1\cup Z_2$, and
$D:=\deg X=2d$. Now consider the projective closure $\overline{X}=\overline{Z}_1\cup\overline{Z}_2$.
Let $F_i\in k[X_0,\ldots,X_3]$ denote the homogenization of $f_i$, i.e,
$$
F_1=X_1,\quad  F_2:=X_2X_3^{d-1}-X_0^d,\quad F_3=X_1X_3^{d-1}-X_2^d.
$$
One easily checks that $I(\overline{Z}_1)=(F_1,F_2)$ and $I(\overline{Z}_2)=(F_3)$.
Now let $e_1,e_2\in k[X_1,X_2,X_3]$ denote the idempotents of $Z_1$ and $Z_2$, $\d:=\max\{\deg e_1,\deg e_2\}$,
and $E_1,E_2$ their homogenizations w.r.t.~degree $\d$, i.e., $E_i=X_0^\d e_i(X/X_0)$. Then
we have
$$
E_1+E_2=X_0^\d\ \text{on}\ \overline{X},\quad 
E_1=0\ \text{on}\ \overline{Z}_2,\quad
E_2=0\ \text{on}\ \overline{Z}_1,
$$
hence $X_0^\d\in I(\overline{Z}_1)+I(\overline{Z}_2)$. It follows that its image in
$$
k[X_0,\ldots,X_3]/(F_1,F_2,F_3,X_3-1)\simeq k[X_0]/(X_0^{d^2})
$$
is also zero, so that
$$
\max\{\deg e_1,\deg e_2\}=\d\ge d^2=D^2/4.
$$
\end{example}

\section{Some Reductions}
Our aim is to show that it is sufficient to prove the claimed bounds for certain irreducible
hypersurfaces. In this section we first reduce to the irreducible case, and then effectively
construct a cover of $X$ by affine open patches which are isomorphic to hypersurfaces.
In the next section, we will show how degree bounds on those patches yield a bound for
$\Hdr^\bullet(X)$.

The reduction to the irreducible case follows from the effective characterization of the
connected components discussed in \S\ref{se:zerothCoh}.

\begin{corollary}\label{cor:redIrred}
If $X\subseteq\A^n$ is a smooth closed variety of degree $D$ with irreducible components $Z_i$
and $N:=\min\left\{(n+1)D^2/4,D^{m+1}\right\}$, then
$$
\deg(\Hdr^\bullet(X))\le\max_i\{\deg(\Hdr^\bullet(Z_i))\}+N.
$$
\end{corollary}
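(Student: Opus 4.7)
The plan is to exploit the fact that smoothness forces the irreducible components $Z_1,\ldots,Z_t$ of $X$ to be pairwise disjoint, so they coincide with the connected components of $X$. This lets me invoke Section~\ref{se:zerothCoh} to obtain idempotents $e_i\in k[X]$ satisfying $e_i|_{Z_j}=\delta_{ij}$ and $\deg e_i\le N$, together with a product decomposition $k[X]\simeq\prod_i k[Z_i]$. The latter extends to an isomorphism of complexes $\Om_X^\bullet\simeq\prod_i\Om_{Z_i}^\bullet$, under which $d$ acts factorwise and $\Hdr^p(X)\simeq\bigoplus_i\Hdr^p(Z_i)$.

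Given a class $[\omega]\in\Hdr^p(X)$, I would use the hypothesis on each component $Z_i$ to replace the restriction $\omega|_{Z_i}$ by a cohomologous form $\alpha_i\in\Om_{Z_i}^p$ of degree at most $M:=\max_i\deg(\Hdr^\bullet(Z_i))$. Since the degree on $k[Z_i]=R/I(Z_i)$ is defined as the minimum degree of a lift to $R$, each $\alpha_i$ lifts to a form $\tilde\alpha_i\in\Om_X^p$ of the same degree, and the candidate representative of $[\omega]$ is $\eta:=\sum_i e_i\tilde\alpha_i$, whose degree is at most $N+M$.

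The main thing to verify is that $[\eta]=[\omega]$ in $\Hdr^p(X)$, which amounts to checking that $d$ commutes with the projection of $\Om_X^\bullet$ onto each factor $\Om_{Z_i}^\bullet$ --- equivalently, that $de_i=0$ in $\Om_X^1$. This follows formally from $e_i^2=e_i$ by differentiating and noting that $2e_i-1$ is a unit in $k[X]$, since its image in each factor $k[Z_j]$ equals $\pm 1$. Given this, lifting an arbitrary primitive $\beta_i\in\Om_{Z_i}^{p-1}$ of $\omega|_{Z_i}-\alpha_i$ to some $\tilde\beta_i\in\Om_X^{p-1}$ and using the product structure yields $e_i\omega-e_i\tilde\alpha_i=d(e_i\tilde\beta_i)$; summing over $i$ then closes the argument. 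The only real obstacle would be if the idempotents failed to be locally constant on $X$, which is precisely the singular phenomenon that the smoothness hypothesis rules out here; everything else is routine once the idempotents of Section~\ref{se:zerothCoh} are in hand.
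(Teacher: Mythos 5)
Your proposal is correct and follows essentially the same route as the paper: smoothness makes the irreducible components coincide with the connected components, the cohomology splits as $\bigoplus_i\Hdr^\bullet(Z_i)$, and a representative of each class is obtained by multiplying lifted representatives from the $Z_i$ by the idempotents $e_i$ of Section~\ref{se:zerothCoh}, whose degrees are bounded by $N$ via Propositions~\ref{prop:boundConnComp1} and~\ref{prop:boundConnComp2}. You merely spell out details (such as $\ud e_i=0$) that the paper leaves implicit.
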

\begin{proof}
Since the irreducible components of $X$ coincide with its connected components, the restrictions
of differential forms to the $Z_i$ induce an isomorphism
\begin{equation}\label{eq:cohDirectSum}
\Hdr^\bullet(X)\simeq\bigoplus_i \Hdr^\bullet(Z_i)).
\end{equation}
Let $e_i\in k[X]$ denote the idempotent corresponding to $Z_i$. Since $e_i$ is 1 on $Z_i$ and 0
on $X\setminus Z_i$, the embedding $\Hdr^\bullet(Z_i)\hookrightarrow\Hdr^\bullet(X)$ is induced by
the map $\om\mapsto e_i\om$, so that Propositions~\ref{prop:boundConnComp1}
and~\ref{prop:boundConnComp2} imply the claim.
\end{proof}

It is well known that each irreducible variety $X$ is birational to a hypersurface~$Y$. By
Zariski's Main Theorem, there exist open dense subsets $U\subseteq X$ and $V\subseteq Y$ which
are isomorphic. Now we make this construction effective and obtain the result that degree and
order bounds for $\Hdr^\bullet(V)$ imply such bounds for $\Hdr^\bullet(U)$.

The following lemma essentially consists of the construction of a {\em geometric
resolution}~\cite{ghmp:95,par:95}.
\begin{lemma}\label{lem:biratHyper}
Let $X\subseteq \A^n$ be a closed irreducible subvariety of dimension~$m<n$
and $\Xi\subseteq X$ be a finite subset. Then there exists a linear coordinate
transformation after which $X$ is in Noether normal position with algebraically
independent variables $X_1,\ldots,X_m$ and a polynomial
$f\in k[X_1,\ldots,X_{m+1}]$ such that
\begin{enumerate}[(i)]
\item $f$ is irreducible and monic in $X_{m+1}$,
\item $\deg f\le\deg X$,
\item $Y:=\mZ(f)\subseteq\A^{m+1}$ is the closure of the image of $X$ under the projection
$\pi\colon\A^n\to\A^{m+1}$, $(x_1,\ldots,x_n)\mapsto(x_1,\ldots,x_{m+1})$,
\item with $g:=\partder{f}{X_{m+1}}$ the projection $\pi$ restricts to an
isomorphism
$$
U:=X\setminus \mZ(g)\stackrel{\simeq}{\longrightarrow} V:=Y\setminus\mZ(g),\quad
\text{and}
$$
\item $g(\xi)\ne 0$ for all $\xi\in\Xi$.
\end{enumerate}
\end{lemma}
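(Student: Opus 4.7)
The plan is to produce $f$ as the minimal polynomial of a primitive element of the function field extension $k(X_1,\ldots,X_m) \hookrightarrow k(X)$, after a suitable generic linear change of coordinates; this is the standard construction of a \emph{geometric resolution}.

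First, I would argue that a single generic linear change of coordinates on $\A^n$ simultaneously achieves three properties. (a) Noether normalization: $X_1,\ldots,X_m$ are algebraically independent in $k[X]$ and the inclusion $k[X_1,\ldots,X_m] \hookrightarrow k[X]$ is integral. (b) Primitive element: $X_{m+1}$ generates $k(X)$ over $k(X_1,\ldots,X_m)$; this is possible because $\mathrm{char}\,k=0$ makes the extension separable, and the primitive element theorem then produces a generic linear combination of $X_{m+1},\ldots,X_n$ which, after a linear shear fixing $X_1,\ldots,X_m$, may be taken to be $X_{m+1}$. (c) For each $\xi\in\Xi$, the projection $\pi$ is unramified at $\xi$. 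Each of (a), (b), (c) is a Zariski-open condition on the parameter space of linear transformations, and each is non-empty, so their intersection is non-empty.

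Next, define $f\in k[X_1,\ldots,X_{m+1}]$ as the minimal polynomial of $X_{m+1}$ over $k(X_1,\ldots,X_m)$. Integrality forces the coefficients of $f$ to lie in $k[X_1,\ldots,X_m]$ and $f$ to be monic in $X_{m+1}$, giving (i); and $Y=\mZ(f)$ is the image closure by construction, giving (iii). For (ii), note that $Y$ is an irreducible hypersurface, so $\deg Y = \deg f$; since $\pi\colon X\to Y$ is finite and birational, cutting with a generic affine linear subspace of complementary dimension shows $\deg Y \le \deg X$.

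The core of the proof is (iv). I would use the classical different/trace-dual-basis statement for the tower $k[X_1,\ldots,X_m] \subseteq k[X_1,\ldots,X_m][X_{m+1}]/(f)$: the different of this extension is generated by $g=\partder{f}{X_{m+1}}$, so every element integral over $k[X_1,\ldots,X_m]$ lies in $g^{-1}\cdot k[X_1,\ldots,X_{m+1}]$ modulo $(f)$. Applied to each $X_i$ for $m+2\le i\le n$ (which is integral over $k[X_1,\ldots,X_m]$ by Noether normalization), this produces polynomials $P_i\in k[X_1,\ldots,X_{m+1}]$ and $s_i\in\N$ with $g^{s_i}X_i \equiv P_i \pmod{I(X)}$, giving a regular inverse to $\pi$ on $V$. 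Condition (v) is already built into step (c) above. The main obstacle is the simultaneous genericity argument in the first step—one must check that the locus of linear transformations that fail any of (a)–(c) (including the finitely many avoidance conditions for $\Xi$) is a proper closed subset of the affine group, and that the primitive-element condition can be imposed \emph{after} Noether normalization without destroying it; this is a careful but standard dimension count in the parameter space.
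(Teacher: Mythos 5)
Your proposal is correct and follows the same overall strategy as the paper (Noether normalization plus a primitive element $X_{m+1}$ obtained from a generic linear form, with $f$ the minimal polynomial and $Y=\mZ(f)$ the image closure; parts (i)--(iii) and (v) are handled essentially identically, with (ii) resting in both cases on the fact that a linear projection does not increase degree). The genuine difference is in how you produce the rational parametrization behind (iv). The paper keeps the coefficients $\la_1,\ldots,\la_r$ of the linear form $u_\la$ as formal parameters and differentiates the identity $f_\la(u_\la)=0$ with respect to $\la_i$; the chain rule then yields $g_\la(u_\la)\,X_{m+i}=-\partder{f_\la}{\la_i}(u_\la)$, i.e.\ explicit polynomials $w_i=\partder{f_\la}{\la_i}$ with $g\,X_{m+i}\equiv -w_i$ and, crucially, $\deg w_i\le\deg f$. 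You instead invoke Dedekind's theorem on the different/conductor: since $k[X_1,\ldots,X_m]$ is normal and $k[X]$ lies in the integral closure of $k[X_1,\ldots,X_m][X_{m+1}]/(f)$, multiplication by $g=\partder{f}{X_{m+1}}$ maps it into the monogenic subring, giving $gX_i\equiv P_i\pmod{I(X)}$ (your exponents $s_i$ can in fact be taken to be $1$). Both arguments are valid and both leave the surjectivity of $\pi|_U$ onto $V$ (which follows from finiteness of $\pi$) at the same implicit level as the paper. What the paper's parameter-differentiation buys, and your different-based argument does not immediately provide, is the degree bound $\deg w_i\le\deg f$ on the parametrizing polynomials; this bound is not part of the Lemma's statement, but it is quoted from "the proof of Lemma~\ref{lem:biratHyper}" in the proof of Corollary~\ref{cor:transHyperPatch}, so if you took your route you would need to supplement it with an explicit degree estimate for the $P_i$ (e.g.\ via the trace form or Cramer's rule) to support the later arguments.
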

\begin{proof}
Let $A=R/I(X)$ be the coordinate ring of $X$. A linear coordinate transformation
brings $X$ into Noether normal position, i.e., we assume that the ring extension
$k[X_1,\ldots,X_m]\hookrightarrow A$ is integral. Let $r:=n-m$. For
$\la=(\la_1,\ldots,\la_r)\in k^r$ let $u_{\la}:=\la_1 X_{m+1}+\cdots+\la_rX_{n}$.
Then there exists a monic polynomial $f_{\la}\in k[X_1,\ldots,X_m][T]$ with
$f_{\la}(u_{\la})=0$ in $A$. Since $A$ is a domain, we can assume
$f_{\la}$ to be irreducible. Let $\pi_{\la}\colon X\to \A^{m+1}$ be the map
$(x_1,\ldots,x_n)\mapsto(x_1,\ldots,x_m,u_{\la})$. Obviously,~$f_{\la}$
vanishes on the irreducible hypersurface $\pi_{\la}(X)$ of $\A^{m+1}$, so it
is a reduced equation for it. It follows
$\deg f_{\la}=\deg \pi_{\la}(X)\le\deg X$~\cite[Lemma 2]{hei:83}.
Now we consider $\la_1,\ldots,\la_r$ as variables and argue as above over the
field $k_{\la}:=\overline{k(\la_1,\ldots,\la_r)}$. It is shown in~\cite{gls:01,sas:95}
that $f_{\la}\in k[\la,X_1,\ldots,X_m,T]$. Now set $g_{\la}:=\partder{f_{\la}}{T}$.
Differentiating the equation $f_{\la}(u_{\la})=0$ with respect to $\la_i$, the
chain rule yields
$$
\partder{f_{\la}}{\la_i}(u_{\la})+\partder{f_{\la}}{T}(u_{\la})\partder{u_{\la}}{\la_i}
=\partder{f_{\la}}{\la_i}(u_{\la})+g_{\la}(u_{\la})X_{m+i}=0,
\quad 1\le i\le r.
$$
We choose $\la\in k^r$ such that $g_{\la}(\xi_1,\ldots,\xi_m,u_{\la}(\xi))\ne 0$
for all $\xi=(\xi_1,\ldots,\xi_n)\in\Xi$ and denote $f:=f_{\la}$, $g:=g_{\la}$,
$w_i:=\partder{f_{\la}}{\la_i}$.
By another linear coordinate transformation we can assume $u_\la=X_{m+1}$. It
follows that the map
$$
\mZ(f)\setminus\mZ(g)\to X\setminus\mZ(g),\quad
x=(x_1,\ldots,x_{m+1})\mapsto \left(x,-\frac{w_2(x)}{g(x)},\ldots,-\frac{w_r(x)}{g(x)}\right)
$$
is an inverse of the projection, which concludes the proof.
\end{proof}

\begin{remark}
If $X\subseteq\A^n$ is smooth, then $V$ from Lemma~\ref{lem:biratHyper} is also
smooth.
\end{remark}

\begin{corollary}\label{cor:transHyperPatch}
In the situation of Lemma~\ref{lem:biratHyper}, consider the open subsets
$U':=U\setminus\mZ(h)\subseteq X$, where $h\in R$, and
$V':=\pi(U')\subseteq Y$. Then there exists $H\in k[X_1,\ldots,X_{m+1}]$ with
$\deg H\le\deg h\deg X$ such that $k[V']=k[Y]_{gH}$. Moreover, the
isomorphism $\varphi\colon k[Y]_{gH}=k[V']\stackrel{\simeq}{\to} k[U']=k[X]_{gh}$ 
satisfies
\[
\varphi(F^{s,d} k[Y]_{gH}) \subseteq F^{s(\deg h+1),d} k[X]_{gh}.
\]
\end{corollary}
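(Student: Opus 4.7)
The plan is to pull back $h$ explicitly via the inverse of $\pi|_U\colon U\to V$ constructed in the proof of Lemma~\ref{lem:biratHyper}, which sends
\[
(x_1,\ldots,x_{m+1})\in V\ \longmapsto\ (x_1,\ldots,x_{m+1},-w_2(x)/g(x),\ldots,-w_r(x)/g(x))\in U,
\]
with $w_i\in k[X_1,\ldots,X_{m+1}]$ of degree at most $\deg f\le \deg X$. The pullback $\varphi^{-1}(h)\in k[V]=k[Y]_g$ is therefore a rational function whose denominator is a pure power of $g$.

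Concretely, I would define
\[
H\ :=\ g^{\deg h}\cdot h\bigl(X_1,\ldots,X_{m+1},-w_2/g,\ldots,-w_r/g\bigr)\ \in\ k[X_1,\ldots,X_{m+1}],
\]
so that $\pi^*(H)=g^{\deg h}h$ in $A=k[X]$. A term-by-term estimate---each monomial $X_1^{a_1}\cdots X_n^{a_n}$ of $h$, after substitution and clearing by $g^{\deg h}$, contributes a polynomial of total degree at most $(a_1+\cdots+a_{m+1})+\sum_{i\ge 2}a_{m+i}\deg w_i+(\deg h-\sum_{i\ge 2}a_{m+i})\deg g$, which using $\deg w_i\le \deg f$ and $\deg g\le \deg f-1$ is bounded by $\deg h\cdot \deg f$---yields $\deg H\le \deg h\cdot \deg X$. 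Since $g$ is a unit on $V$, the zero set of $\varphi^{-1}(h)=H/g^{\deg h}$ in $V$ coincides with $V\cap \mZ(H)$, so $V'=V\setminus \mZ(H)$ and $k[V']=k[Y]_{gH}$.

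For the filtration claim, take $a\in F^{s,d}k[Y]_{gH}$ with a lift $p/(gH)^s\in k[X_1,\ldots,X_{m+1}]_{gH}$, where $\deg p\le d+s\deg(gH)$. The identity $H=g^{\deg h}h$ in $A$ allows me to rewrite
\[
\varphi(a)\ =\ \frac{p}{(gH)^s}\ =\ \frac{p\cdot h^{s\deg h}}{(gh)^{s(\deg h+1)}}\qquad\text{in }A_{gh},
\]
exhibiting $\varphi(a)$ as the image of a representative in $R_{gh}$ of order at most $s(\deg h+1)$. Substituting the bound on $\deg p$ and expanding $\deg(gh)=\deg g+\deg h$ reduces the degree of this representative in $R_{gh}$ to $d+s\deg H-s\deg h(\deg g+1)$; the tight version of the bound on $\deg H$ from the term-by-term analysis then collapses this to $d$.

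The main obstacle is precisely this last piece of degree bookkeeping: the passage from denominator $(gH)^s$ to $(gh)^{s(\deg h+1)}$ pushes an extra factor $h^{s\deg h}$ into the numerator, and the cancellation needed to recover $d$ rests on the sharp term-by-term bound for $\deg H$, not just the crude $\deg h\cdot \deg X$ estimate. All remaining steps---defining $H$, checking $V'=V\setminus\mZ(H)$, and packaging the rewriting as a lift in $R_{gh}$---are routine polynomial manipulations following directly from the construction in Lemma~\ref{lem:biratHyper}.
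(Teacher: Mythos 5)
Your proof is correct and follows essentially the same route as the paper: define $H:=g^{\deg h}\,h(X_1,\ldots,X_{m+1},-w_2/g,\ldots,-w_r/g)$, bound $\deg H\le\deg h\deg f$, and use $H\equiv g^{\deg h}h$ in $k[X]$ to rewrite $p/(gH)^s$ as $ph^{s\deg h}/(gh)^{s(\deg h+1)}$, with the degree collapsing to $d$ exactly as you compute. The only cosmetic difference is that the paper obtains the sharp bound $\deg H\le\deg h(1+\deg g)=\deg h\deg f$ by noting $\deg(w_i/g)\le 1$ in the localized degree filtration on $R_g$ rather than by your term-by-term monomial estimate; both arguments rest on the same fact $\deg g=\deg f-1$ (since $f$ is monic in $X_{m+1}$ with $X_{m+1}$-degree equal to its total degree).
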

\begin{proof}
By the proof of Lemma~\ref{lem:biratHyper}, there exist $w_i\in k[X_1,\ldots,X_{m+1}]$
of degree at most $\deg f$ such that
\[
X_{m+i}=-w_i/g\quad\text{in}\quad k[X]_g\quad\text{for}\quad 1\le i\le r=n-m.
\]
Thus, the preimage of $h$ under the isomorphism $k[Y]_g\stackrel{\simeq}{\longrightarrow} k[X]_g$
is the residue class of the rational function
$$
\tilde{h}:=h(X_1,\ldots,X_m,-\frac{w_1}{g},\ldots,-\frac{w_r}{g})=H(X_1,\ldots,X_{m+1})/g^{\deg h},
$$
where $H\in k[X_1,\ldots,X_{m+1}]$. It follows $k[V']=k[Y]_{gH}$.
Since $\deg(\frac{w_i}{g})\le 1$, we have
$$
\deg H=\deg\tilde{h}+\deg h\deg g\le\deg h(1+\deg g)=\deg h\deg f\le\deg h\deg X.
$$
An element of $F^{s,d} k[Y]_{gH}$ is the residue class of a rational function $\frac{a}{(gH)^s}$, where
$a\in k[X_1,\ldots,X_{m+1}]$ with $\deg a\le d+s(\deg g+\deg H)$. The isomorphism $\varphi$ 
identifies $H$ with $g^{\deg h} h$, so in $k[X]_{gh}$ we have
\[
\frac{a}{(gH)^s}\equiv \frac{a}{(g\cdot g^{\deg h} h)^s}=\frac{ah^{s\deg h}}{(gh)^{s(\deg h+1)}}.
\]
Since
\begin{align*}
\deg\left(\frac{a}{(g\cdot g^{\deg h} h)^s}\right) &= \deg a-s(\deg h+(\deg h+1)\deg g) \\
 &\le d+s(\deg g+\deg h\deg f-\deg h-\deg h \deg g-\deg g) \\
 &\le d+s\deg h\underbrace{(\deg f-1-\deg g)}_{=0} = d,
\end{align*}
we have $\varphi(\overline{\frac{a}{(gH)^s}})\in F^{s(\deg h+1),d} k[X]_{gh}$.
\end{proof}

\begin{corollary}\label{cor:biratHyperCover}
Let $X\subseteq \A^n$ be a closed irreducible variety of dimension~$m$.
Then there exist principal open subsets $U_0,\ldots,U_m$ covering $X$,
such that each~$U_i$ is isomorphic to an open
subset of a hypersurface described as in Lemma~\ref{lem:biratHyper}.
\end{corollary}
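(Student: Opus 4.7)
The plan is to build the cover $U_0, \ldots, U_m$ by iterating Lemma~\ref{lem:biratHyper}, arranging so that the common vanishing locus of the resulting $g_i$'s inside $X$ shrinks in dimension by at least one at each step. Concretely, I maintain the invariant that after the $i$-th step the closed subset
\[
Z_i := X \cap \mZ(g_0) \cap \cdots \cap \mZ(g_i)
\]
has dimension at most $m-1-i$, starting from $Z_{-1} := X$ of dimension $m$.

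Given $U_0, \ldots, U_{i-1}$ already constructed, I pick a finite subset $\Xi_i \subseteq Z_{i-1}$ meeting every irreducible component of $Z_{i-1}$ (possible since there are only finitely many such components, each non-empty), and apply Lemma~\ref{lem:biratHyper} to $X$ with this $\Xi=\Xi_i$. The lemma furnishes, after a suitable linear change of coordinates, polynomials $f_i \in k[X_1,\ldots,X_{m+1}]$ and $g_i := \partial f_i/\partial X_{m+1}$ such that $U_i := X \setminus \mZ(g_i)$ is isomorphic via the projection to the open subset $V_i := \mZ(f_i)\setminus\mZ(g_i)$ of the hypersurface $\mZ(f_i) \subseteq \A^{m+1}$, and---crucially---$g_i(\xi) \ne 0$ for every $\xi \in \Xi_i$.

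The decisive observation is that because $g_i$ is non-zero on at least one point of each irreducible component of $Z_{i-1}$, the hypersurface $\mZ(g_i)$ cannot contain any such component; hence $Z_i = Z_{i-1} \cap \mZ(g_i)$ is a proper closed subset of every component of $Z_{i-1}$, so $\dim Z_i < \dim Z_{i-1}$ and the invariant is preserved. After $m+1$ iterations the invariant forces $\dim Z_m \le -1$, i.e.\ $Z_m = \emptyset$; passing to complements yields $X = \bigcup_{i=0}^m U_i$, as required.

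There is no serious obstacle: the ``finite subset $\Xi$'' feature of Lemma~\ref{lem:biratHyper} was tailored precisely for this kind of covering argument, and all that remains is the elementary dimension-drop bookkeeping sketched above. The only small wrinkle is the possibility that $Z_{i-1}$ becomes empty before step $m$, in which case the remaining indices $j$ can be filled in by applying Lemma~\ref{lem:biratHyper} with any single-point $\Xi$, which is harmless.
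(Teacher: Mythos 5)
Your proposal is correct and follows essentially the same argument as the paper: iterate Lemma~\ref{lem:biratHyper}, choosing $\Xi_i$ to meet every irreducible component of the part of $X$ not yet covered, so that the dimension of the uncovered locus drops strictly at each step and vanishes after $m+1$ steps. The invariant $\dim Z_i\le m-1-i$ and the remark about early termination are just slightly more explicit bookkeeping of what the paper does.
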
 
\begin{proof}
We apply Lemma~\ref{lem:biratHyper} successively to construct the $U_i$. We
start with an arbitrary one-point set $\Xi_0=\{\xi\}\subseteq X$ to
obtain $U_0$. 
Having constructed $U_0,\ldots, U_{i-1}$, let $\Xi_i$ contain a point from each
irreducible component of $X\setminus \bigcup_{0\le j<i} U_j$, so that one obtains
$U_i$ with
$$
\dim\Big(X\setminus \bigcup_{j=0}^{i-1} U_j\Big)>\dim\Big(X\setminus\bigcup_{j=0}^{i} U_j\Big).
$$
It follows that $\dim X\setminus \bigcup_{0\le j\le m} U_{j}=-1$, hence
$\bigcup_{i=0}^m U_i=X$.
\end{proof}

\section{Effective \v{C}ech and Hypercohomology}
In this section we show how the cohomologies of the open patches of Corollary~\ref{cor:biratHyperCover}
fit together to give effective descriptions of the cohomology of $X$. This is done via effective
hypercohomology, for which we first need to make \v{C}ech cohomology effective.

So let $X\subseteq\A^n$ be a closed variety, and let $\mF$ be a coherent sheaf on $X$.  Since~$X$
is affine, we have
\begin{equation}\label{eq:vanishing}
H^0(X,\mF)=\G(X,\mF),\quad H^q(X,\mF)=0\quad\text{for}\quad q>0.
\end{equation}
Now let $A:=k[X]$ be the coordinate ring of $X$, and let $M$ be a finitely generated $A$-module
such that $\mF=\tilde{M}$. Furthermore, let $\mU=\{U_i\,|\,0\le i\le t\}$ be a cover of $X$ by
principal open subsets $U_i=X\setminus\mZ(g_i)$, where $g_i\in R$ is a non-zerodivisor on $A$.
By \S\ref{ss:sheafCohomology},
the sheaf cohomology of $\mF$ can be computed as the \v{C}ech cohomology with respect to~$\mU$.
Using~\S\ref{ss:cohSheaves}, the \v{C}ech complex $C^\bullet=C^\bullet(\mU,\mF)$ can be explicitly
descibed as follows. For a set of indices $0\le i_0,\ldots,i_q\le t$ set
$g_{i_0\cdots i_q}:=g_{i_0}\cdots g_{i_q}$. Then
$$
C^q=\bigoplus_{0\le i_0<\ldots<i_q\le t}M_{g_{i_0\cdots i_q}},\quad 0\le q\le t.
$$
The properties~\eqref{eq:vanishing} mean that the augmented complex
$$
0\longrightarrow C^{-1}:=M\stackrel{\d^{-1}}{\longrightarrow}C^0
\stackrel{\d^0}{\longrightarrow}C^1\stackrel{\d^1}{\longrightarrow}
\cdots\stackrel{\d^{t-1}}{\longrightarrow}C^t\longrightarrow 0
$$
is exact, where $\d^{-1}$ is induced by the restriction. Note that $\d^{-1}$ yields the isomorphism
$M\simeq H^0(X,\mF)$.

The point of this section is to study degree and order bounds for a preimage under $\d^\bullet$. So
we consider a filtration
$$
\cdots\subseteq F^d M\subseteq F^{d+1}M\subseteq\cdots \subseteq M
$$
on $M$ which is compatible with $D^\bullet A$.
Recall from \S\ref{ss:basics} that there is an induced double filtration
$F^{\bullet,\bullet} M_{g_{i_0\cdots i_q}}$. These double filtrations extend to the spaces $C^q$ by setting
$$
F^{s,d} C^q:=\bigoplus_{0\le i_0<\cdots<i_q\le t} F^{s,d} M_{g_{i_0}\cdots g_{i_q}}
$$
Since the restriction does increase neither degree nor order,
we have $\d^q(F^{s,d} C^q)\subseteq F^{s,d} C^{q+1}$.

\begin{lemma}\label{lem:chechCoh}
Let $\deg X=D$, $\deg g_i\le d_1$, and $N:=2D(sd_1)^m$.
Then
\[
F^{s,d}C^q\cap\ker \d^q \subseteq \d^{q-1}(F^{s,d+N}C^{q-1}) \quad\text{for all}\quad 0\le q\le t.
\]
\end{lemma}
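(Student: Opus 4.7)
The plan is to construct an explicit preimage of $\om$ under $\d^{q-1}$ by imitating the classical \v{C}ech homotopy with a polynomial partition of unity supplied by the effective Nullstellensatz; the one substantive ingredient is Theorem~\ref{thm:effNS}, the rest is bookkeeping.

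First, because $\mU$ covers $X$, the polynomials $g_0^s,\ldots,g_t^s$, each of degree at most $sd_1$, have no common zero on $X$, so Theorem~\ref{thm:effNS} (worst case) produces polynomials $h_0,\ldots,h_t\in R$ with
\[
\sum_{j=0}^{t} h_j g_j^s\equiv 1\pmod{I(X)}\quad\text{and}\quad \deg(h_jg_j^s)\le 2D(sd_1)^m=N.
\]
For a cocycle $\om\in F^{s,d}C^q\cap\ker\d^q$ I would then define
\[
\eta_{i_0\cdots i_{q-1}}\,:=\,\sum_{j=0}^{t} h_j g_j^s\,\om_{ji_0\cdots i_{q-1}},
\]
extending $\om$ by the usual alternating convention when $j$ coincides with some $i_\nu$, and verify: (a) each summand lies in $M_{g_{i_0}\cdots g_{i_{q-1}}}$, because $\om_{ji_0\cdots i_{q-1}}$ has $g_j$-order at most $s$, so that multiplication by $g_j^s$ clears $g_j$ from the denominator; (b) $\d^{q-1}\eta=\om$ by the standard homotopy identity, obtained from the cocycle relation $\om_{i_0\cdots i_q}=\sum_\nu(-1)^\nu\om_{ji_0\cdots\hat{i_\nu}\cdots i_q}$ together with $\sum_j h_jg_j^s=1$; (c) the filtration claim $\eta\in F^{s,d+N}C^{q-1}$.

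The main delicacy is the degree bookkeeping in (c). After clearing the $g_j^s$, the element $g_j^s\om_{ji_0\cdots i_{q-1}}$ sits in $F^{s,d+s\deg g_j}M_{g_{i_0}\cdots g_{i_{q-1}}}$, not in $F^{s,d}$; this loss of $s\deg g_j$ degrees is unavoidable. The further factor $h_j$ contributes $\deg h_j$, so the total shift is $\deg h_j+s\deg g_j=\deg(h_jg_j^s)\le N$. This is precisely why the Nullstellensatz must be applied to $g_j^s$ rather than to $g_j$ itself, and why the bound $N$ is phrased in terms of $\deg(h_jg_j^s)$. The same formula also covers the boundary case $q=0$: there $\d^0\om=0$ merely says that the $\om_j$ glue to a global element of $M$, which the formula extracts as an element of $F^{d+N}M$ by the same argument.
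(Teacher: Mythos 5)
Your proof is correct and follows essentially the same route as the paper: apply the effective Nullstellensatz (Theorem~\ref{thm:effNS}) to the powers $g_0^s,\ldots,g_t^s$ to obtain a partition of unity $\sum_j h_jg_j^s=1$ on $X$ with $\deg(h_jg_j^s)\le N$, and use the standard \v{C}ech homotopy $\eta_{i_0\cdots i_{q-1}}=\sum_j h_jg_j^s\om_{ji_0\cdots i_{q-1}}$; your formula coincides with the paper's (which writes it with numerators $\a$ after clearing denominators), and your degree bookkeeping, including the observation that the total degree shift is exactly $\deg(h_jg_j^s)\le N$, matches the paper's estimate.
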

\begin{proof}
For $\om=(\om_{i_0\cdots i_q})_{i_0<\cdots <i_q}\in F^{s,d} C^q\cap\ker \d^q$ we write
$\om_{i_0\cdots i_q}=\frac{\a_{i_0\cdots i_q}}{g_{i_0\cdots i_q}^s}$, where
$\a_{i_0\cdots i_q}\in M$ with $\deg \a_{i_0\cdots i_q}\le d+s\deg g_{i_0\cdots i_q}$.
The assumption $\d^q(\om)=0$ implies
\begin{equation}\label{eq:closed}
\sum_\nu (-1)^\nu g_{i_\nu}^s\a_{i_0\cdots\hat{i_\nu}\cdots i_{q+1}}=0\quad\text{for all}
\quad 0\le i_0<\cdots <i_{q+1}\le t.
\end{equation}
Since $\mU$ is a cover of $X$, the $g_0^s,\ldots,g_t^s$ have no
common zero in $X$. Thus, by Theorem~\ref{thm:effNS} there are
$h_0,\ldots, h_t\in A$ with $\sum_i h_i g_i^s=1$ and $\deg(h_i g_i^s)\le N$.
Now we let $\eta=(\eta_{i_0\cdots i_{q-1}})\in C^{q-1}$ with
$$
\eta_{i_0\cdots i_{q-1}}:=\frac{1}{g_{i_0\cdots i_{q-1}}^s}\sum_{i=0}^t h_i\a_{i,i_0\cdots i_{q-1}}
\quad\text{for all}\quad 0\le i_0<\cdots <i_{q-1}\le t.
$$
Here we define $\a_{i,i_0\cdots i_{q-1}}$ to be zero, if $i\in\{i_0\cdots i_{q-1}\}$,
and $\eps\a_{j_0\cdots j_q}$ otherwise, where $\{i_0,\ldots, i_{q-1}, i\}=\{j_0<\cdots <j_q\}$
and $\eps$ is the sign of the permutation sorting $(i_0,\ldots, i_{q-1}, i)$.
Obviously, we have $\ord{}\eta_{i_0\cdots i_{q-1}}\le s$. To bound its degree,
note that for all $i$
$$
\deg(h_i\a_{i,i_0\cdots i_{q-1}})\le N-s\deg g_i+d+s\deg g_{i,i_0\cdots i_{q-1}}
=N+d+s\deg g_{i_0\cdots i_{q-1}},
$$
which implies $\deg\eta\le N+d$.

Finally, we have
\begin{align*}
\d^{q-1}(\eta)_{i_0\cdots i_q} & = \sum_{\nu=0}^q(-1)^\nu \eta_{i_0\cdots\hat{i_\nu}\cdots i_q}|U_{i_0\cdots i_q} \\
 & = \sum_{\nu=0}^q(-1)^{\nu}\frac{1}{g_{i_0\cdots\hat{i_\nu}\cdots i_q}^s}
\sum_{i=0}^t h_i\a_{i,i_0\cdots\hat{i_\nu}\cdots i_q} \\
 & = \frac{1}{g_{i_0\cdots i_q}^s}\sum_{i=0}^th_i\sum_{\nu=0}^q(-1)^{\nu}
g_{i_\nu}^s \a_{i,i_0\cdots\hat{i_\nu}\cdots i_q} \\
 & \stackrel{\eqref{eq:closed}}{=} \frac{1}{g_{i_0\cdots i_q}^s}\sum_{i=0}^th_i
g_i^s\a_{i_0\cdots i_q} \\
 & = \om_{i_0\cdots i_q},
\end{align*}
hence $\d^{q-1}(\eta)=\om$.
\end{proof}

Next we will bound the degree and order of the hypercohomology of a complex of coherent sheaves,
given that we have degree and order bounds for the cohomologies of the open patches. This can be
formulated in a general setting.
\begin{lemma}\label{lem:hyperCoh}
Let $C^{\bullet,\bullet}$ be a first quadrant double complex, where each $C^{p,q}$ is equipped with a
double filtration $F^{\bullet,\bullet}C^{p,q}$. Then we have
$$
\bigcap_{p+q=\ell} B(F^{\bullet,\bullet}H^p(C^{\bullet,q})) \subseteq B(H^\ell(\tot^\bullet(C^{\bullet,\bullet})))\quad
\text{for all}\quad\ell\ge 0.
$$
\end{lemma}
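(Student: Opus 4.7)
Let $(s,d)$ belong to the intersection on the left. The plan is to form the quotient double complex $Q := C^{\bullet,\bullet}/F^{s,d}C^{\bullet,\bullet}$ and show, via a spectral-sequence argument, that the map $H^\ell(\tot^\bullet(C^{\bullet,\bullet})) \to H^\ell(\tot^\bullet(Q))$ vanishes; by the long exact sequence associated to $0 \to \tot^\bullet(F^{s,d}C^{\bullet,\bullet}) \to \tot^\bullet(C^{\bullet,\bullet}) \to \tot^\bullet(Q) \to 0$ this is equivalent to $F^{s,d}H^\ell(\tot^\bullet(C^{\bullet,\bullet})) = H^\ell(\tot^\bullet(C^{\bullet,\bullet}))$, i.e., to $(s,d) \in B(H^\ell(\tot^\bullet(C^{\bullet,\bullet})))$.

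First I translate the hypothesis into spectral-sequence language. Writing $F := F^{s,d}C^{\bullet,\bullet}$ for brevity, the column-wise short exact sequence $0 \to F^{\bullet,q} \to C^{\bullet,q} \to Q^{\bullet,q} \to 0$ of $\ud$-complexes gives the long exact sequence
\[
H^p(F^{\bullet,q}) \longrightarrow H^p(C^{\bullet,q}) \longrightarrow H^p(Q^{\bullet,q}) \longrightarrow H^{p+1}(F^{\bullet,q}).
\]
The hypothesis $F^{s,d}H^p(C^{\bullet,q}) = H^p(C^{\bullet,q})$ for $p+q=\ell$ is by definition the surjectivity of the first arrow, so the $E_1$-map of the column-filtration spectral sequences, $E_1^{p,q}(C) \to E_1^{p,q}(Q)$, is zero whenever $p+q=\ell$.

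Next I propagate this vanishing through all spectral-sequence pages. I claim by induction on $r \ge 1$ that $E_r^{p,q}(C) \to E_r^{p,q}(Q)$ is the zero map for every $p+q=\ell$: any class in $E_{r+1}^{p,q}(C)$ is represented by a $d_r$-cycle in $E_r^{p,q}(C)$, whose image in $E_r^{p,q}(Q)$ already vanishes by the inductive hypothesis, so the induced class in the subquotient $E_{r+1}^{p,q}(Q)$ also vanishes. In the limit, $E_\infty^{p,q}(C) \to E_\infty^{p,q}(Q)$ is zero for every $p+q=\ell$.

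Finally I transfer the vanishing from the associated graded to cohomology. Since $C^{\bullet,\bullet}$ lies in the first quadrant, the column filtration on $H^\ell(\tot^\bullet(C^{\bullet,\bullet}))$ and on $H^\ell(\tot^\bullet(Q))$ is bounded---only indices $0 \le q \le \ell$ contribute---so the successive vanishing of the $\mathrm{gr}^q$ pieces forces the map on cohomology itself to vanish: each class in $H^\ell(\tot^\bullet(C^{\bullet,\bullet}))$ is pushed one step further down the bounded filtration of $H^\ell(\tot^\bullet(Q))$ at each stage and must eventually exit it. The main conceptual step is the spectral-sequence induction, which works cleanly here only because we are verifying a vanishing statement rather than an isomorphism, so no additional strictness hypothesis on the filtration is needed.
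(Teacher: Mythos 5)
Your argument hinges on forming the quotient double complex $Q=C^{\bullet,\bullet}/F^{s,d}C^{\bullet,\bullet}$, the column-wise short exact sequences $0\to F^{\bullet,q}\to C^{\bullet,q}\to Q^{\bullet,q}\to 0$, and the long exact sequence of total complexes. All of this presupposes that $F^{s,d}C^{\bullet,\bullet}$ is a sub-double-complex, i.e., that both differentials preserve the filtration step $F^{s,d}$. The lemma does not assume this, and it fails in every application in the paper: by~\eqref{eq:difLocDegZero} and the standing hypothesis in the setup of Theorem~\ref{thm:effHyperCoh}, the de Rham differential satisfies only $\ud(F^{s,d})\subseteq F^{s+1,d}$, so $F^{s,d}C^{\bullet,\bullet}$ is not closed under $\ud$; and in Lemma~\ref{lem:boundHyperComplement} the shifted filtration $S^{s,d}C^{p,q}=F^{s-q,d-q}C^{p,q}$ is preserved by neither differential. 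Consequently $Q$ is not a complex, $H^p(F^{\bullet,q})$ is undefined, and already your opening translation --- that $F^{s,d}H^p(C^{\bullet,q})=H^p(C^{\bullet,q})$ ``is by definition'' surjectivity of $H^p(F^{\bullet,q})\to H^p(C^{\bullet,q})$ --- has no meaning; by~\eqref{eq:defFiltrCoh} the induced filtration on cohomology is the image of the \emph{subspace} $F^{s,d}C^{p,q}\cap\ker\ud$, not the image of the cohomology of a subcomplex. This incompatibility is not a technicality one can wave away: it is precisely why the explicit zig-zag in the proof of Theorem~\ref{thm:effHyperCoh} loses one unit of order at each step, so any argument that pretends $F^{s,d}$ is a subcomplex is proving a different (and, for the paper's purposes, insufficient) statement.

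The paper's own proof sidesteps this entirely: it never forms quotient or sub\emph{complexes}, but only observes that for the induced filtration on a subquotient $N/N'$ of a filtered vector space $M$, the condition $F^{s,d}M=M$ forces $F^{s,d}(N/N')=N/N'$; hence $B(\cdot)$ can only grow along the pages of the second spectral sequence, and the claim follows from convergence. That said, the parts of your argument that do not touch the quotient construction are sound and would be worth keeping if you add the hypothesis that the filtration is by sub-double-complexes: the induction showing that a map of spectral sequences vanishing on $E_1$ along a diagonal vanishes on every page there, and the observation that a filtered map which is zero on all graded pieces of a bounded exhaustive filtration is zero (with no strictness needed because you are proving a vanishing rather than an isomorphism), are both correct and give a clean alternative route in that restricted setting.
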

\begin{proof}
The second spectral sequence ${}_{II}E_r$ of the double complex $C^{\bullet,\bullet}$ has first term
$$
{}_{II}E_1^{p,q}=H^p(C^{\bullet,q}).
$$
Note that the double filtration on $C^{\bullet,\bullet}$ induces a double filtration on the spectral
sequence. Each term of the spectral sequence is a cohomology of the previous one realized as a
subquotient, thus
$$
B(F^{\bullet,\bullet} {}_{II}E_r^{p,q})\subseteq B(F^{\bullet,\bullet} {}_{II}E_{r+1}^{p,q})\quad\text{for all}\quad r\ge 1.
$$
Moreover, since the double complex is bounded, the spectral sequence
collapses at a finite level, i.e., there exists $r$ such that
${}_{II}E_r=\, {}_{II}E_\infty$. By convergence we have
$$
H^\ell(\tot^\bullet(C^{\bullet,\bullet}))\simeq \bigoplus_{p+q=\ell}\, {}_{II}E_\infty^{p,q}.
$$
Finally, since this isomorphism respects the double filtrations $F^{\bullet,\bullet}C^{p,q}$, we
conclude
$$
B(H^\ell(\tot^\bullet(C^{\bullet,\bullet})))=\bigcap_{p+q=\ell} B(F^{\bullet,\bullet} {{}_{II}E_\infty^{p,q}}),
$$
which implies the claim.
\end{proof}
Now consider a bounded complex of coherent sheaves
$$
\mF^\bullet\colon 0\longrightarrow\mF^0\stackrel{\ud}{\longrightarrow}
\mF^1\stackrel{\ud}{\longrightarrow}\cdots
\stackrel{\ud}{\longrightarrow} \mF^u\longrightarrow 0
$$
on $X$, and let $M^p$ be finitely generated $A$-modules with $\mF^p=\tilde{M}^p$.
Furthermore, let $\mU=\{U_i\,|\,0\le i\le t\}$ be a cover of $X$ by principal open
subsets $U_i=X\setminus\mZ(g_i)$ as above. Recall from \S\ref{ss:hyperCohomology} that the
hypercohomology $\H^\ell(X,\mF^\bullet)$ can be computed in two ways, namely as the cohomology of
the total complex $\tot^\bullet(C^{\bullet,\bullet}(\mU,\mF))$, and as the cohomology of
the complex of global sections $M^\bullet$. Of course, the latter is much simpler, which is usually
taken as an argument that one does not need hypercohomology in the affine setting. However, we
want to go through these arguments to bound degrees. Assume that we have filtrations
$$
\cdots\subseteq F^d M^p\subseteq F^{d+1}M^p\subseteq\cdots \subseteq M^p,
$$
such that the differential $\ud$ satisfies
\[
\ud (F^{s,d}M_g^p) \subseteq F^{s+1,d}M_g^p
\]
with respect to the induced double filtration on the local sections.
As above, the \v{C}ech double complex and its total complex inherit this double filtration.

\begin{theorem}\label{thm:effHyperCoh}
Let $\deg X=D$ and $\deg g_i\le d_1$. Then for all $\ell$ and all
$$
(s,d)\in\bigcap_{p+q=\ell}\bigcap_{i_0<\cdots <i_q}B(H^p(M^\bullet_{g_{i_0\cdots i_q}}))
$$
we have
$$
\deg(H^\ell(M^\bullet))\le d+2D(\ell+1)(s+\ell)^m d_1^m.
$$
\end{theorem}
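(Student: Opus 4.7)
My plan is a zigzag argument combining Lemma~\ref{lem:hyperCoh} with iterated applications of the effective \v{C}ech exactness in Lemma~\ref{lem:chechCoh}. Since $X$ is affine and the $\mF^p$ are coherent, the first spectral sequence of the \v{C}ech-de Rham double complex (see \S\ref{ss:hyperCohomology}) collapses to yield the isomorphism
\[
H^\ell(M^\bullet) \;\simeq\; \H^\ell(X,\mF^\bullet) \;=\; H^\ell(\tot^\bullet(C^{\bullet,\bullet}))
\]
induced by the augmentation $M^p\hookrightarrow C^{p,0}$. The hypothesis translates directly into $(s,d)\in B(H^p(C^{\bullet,q}))$ for all $p+q=\ell$, so Lemma~\ref{lem:hyperCoh} furnishes, for any class, a $\tot$-cocycle representative $\om=\sum_{p+q=\ell}\om^{p,q}$ with $\om^{p,q}\in F^{s,d}C^{p,q}$.

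The core of the argument is to modify $\om$ by $\tot$-coboundaries to kill every component $\om^{p,q}$ with $q>0$. I would proceed inductively in $k=0,1,\dots,\ell-1$: assuming $\om^{(k),p,q}=0$ for $p<k$, the $\tot$-cocycle condition at bidegree $(k,\ell-k+1)$ collapses to $\delta\om^{(k),k,\ell-k}=0$, while the bookkeeping yields $\om^{(k),k,\ell-k}\in F^{s+k,\,d+\sum_{j<k}N_j}C^{\ell-k}(\mU,\mF^k)$ with $N_j:=2D((s+j)d_1)^m$. Applying Lemma~\ref{lem:chechCoh} to the \v{C}ech complex of $\mF^k$ produces $\eta^{k,\ell-k-1}\in F^{s+k,\,d+\sum_{j\le k}N_j}C^{\ell-k-1}(\mU,\mF^k)$ with $\delta\eta^{k,\ell-k-1}=\om^{(k),k,\ell-k}$; subtracting $\ud^\tot\eta^{k,\ell-k-1}$ kills the $(k,\ell-k)$ component and perturbs only the $(k+1,\ell-k-1)$ component by $\pm d\eta^{k,\ell-k-1}$, which, by the assumed compatibility $\ud(F^{s,d}M_g^p)\subseteq F^{s+1,d}M_g^p$, raises the order by one without affecting the degree bound.

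After $\ell$ iterations, $\om^{(\ell)}$ is supported entirely in $C^{\ell,0}$, lies in $F^{s+\ell,\,d+\sum_{j<\ell}N_j}C^{\ell,0}$, and satisfies $\delta\om^{(\ell),\ell,0}=0$ from the $(\ell,1)$ cocycle condition. A final application of Lemma~\ref{lem:chechCoh} at \v{C}ech degree $q=0$, where the partition of unity $1=\sum_i h_ig_i^{s+\ell}$ supplied by the effective Nullstellensatz (Theorem~\ref{thm:effNS}) reassembles compatible local data into a genuine global section, yields $\om\in F^{d+\sum_{j=0}^{\ell}N_j}M^\ell$ with $\om|_{U_i}=\om^{(\ell),\ell,0}_i$. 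The cocycle condition at bidegree $(\ell+1,0)$ ensures $d\om=0$, and $\om$ represents the original class in $H^\ell(M^\bullet)$ by construction. The stated bound then follows from $\sum_{j=0}^\ell N_j\le(\ell+1)\cdot 2D((s+\ell)d_1)^m$.

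The main delicate step is the final one: one has to extract an element of $M^\ell$, not just of $C^{\ell,0}$, with controlled degree, and this is where the effective Nullstellensatz is essential (it is precisely the $q=0$ edge case of Lemma~\ref{lem:chechCoh}). The rest is careful bookkeeping of the pair (order, degree) through the zigzag, together with the observation that each $\tot$-coboundary adjustment preserves the cohomology class both in $H^\ell(\tot)$ and, via the first-spectral-sequence isomorphism, in $H^\ell(M^\bullet)$.
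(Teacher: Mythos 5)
Your proposal is correct and follows essentially the same route as the paper: represent the class by a total cocycle via Lemma~\ref{lem:hyperCoh}, then zigzag down the anti-diagonal using Lemma~\ref{lem:chechCoh} to kill the components of positive \v{C}ech degree (each correction raising the order by one through $\ud$), and finish with the $q=0$ case of Lemma~\ref{lem:chechCoh} to globalize to an element of $M^\ell$. The only difference is cosmetic: you track step-dependent bounds $N_j=2D((s+j)d_1)^m$ while the paper uses the uniform worst case $N=2D((s+\ell)d_1)^m$, and both yield the stated bound $d+2D(\ell+1)(s+\ell)^m d_1^m$.
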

\begin{proof}
Fix $\ell$ and set $N:=2D(s+\ell)^m d_1^m$.
By Lemma~\ref{lem:hyperCoh} each cohomology class
$c\in H^\ell(\tot^\bullet(C^{\bullet,\bullet}))$ can be represented by an element
$\om=(\om_{0,\ell},\om_{1,\ell-1},\ldots,\om_{\ell,0})$, where
$$
\om_{p,q}\in F^{s,d} C^{p,q}\quad\text{for all}\quad p+q=\ell.
$$
Since $d^\tot(\om)=0$, we have in particular $\d^\ell(\om_{0,\ell})=0$, so $\om_{0,\ell}$
defines an element in $H^\ell(X,\mF^0)$. By Lemma~\ref{lem:chechCoh} there exists
$$
\eta\in F^{s,d+N} C^{0,\ell-1}
$$
with $\d^{\ell-1}(\eta)=\om_{0,\ell}$. Consider
$$
\om':=(0,\om_{1,\ell-1}+(-1)^\ell\ud\eta,\om_{2,\ell-2},\ldots,\om_{\ell,0}).
$$
Since $\om-\om'=(\om_{0,\ell},(-1)^{\ell-1}\ud\eta,0,\ldots,0)=\ud^\tot(\eta,0,\ldots,0)$,
$\om$ and $\om'$ define the same cohomology class $c$.
Note that $\om_{1,\ell-1}+(-1)^\ell\ud\eta\in F^{s+1,d+N} C^{1,\ell-1}$.
Continuing this way, after $\ell$ steps we have a representative of~$c$ of the
form $\om''=(0,\ldots,0,\om_{\ell,0}'')$, where
$$
\om_{\ell,0}''\in F^{s+\ell,d+\ell N} C^{\ell,0}.
$$
The closedness of $\om''$ means that $\d^0(\om_{\ell,0}'')=0$ and $\ud(\om_{\ell,0}'')=0$.
Applying Lemma~\ref{lem:chechCoh} once more yields $\a\in F^{d+(\ell+1)N} M^\ell$
with $\d^{-1}(\a)=\om_{\ell,0}''$. Moreover, $\ud(\a)=0$, hence $\a$ defines a
class in $H^\ell(M^\bullet)$, which is the image of $c$ under the isomorphism
$H^\ell(\tot^\bullet(C^{\bullet,\bullet}))\simeq H^\ell(M^\bullet)$.
\end{proof}

We close this section by discussing the case of an open subset of $X:=\A^n$.
Let $Z=\mZ(f_0,\ldots,f_r)\subseteq X$ be a closed subvariety and consider its
complement $W:=X\setminus Z$. If $r>0$, then $W$ is not affine, and as a result its de Rham
cohomology $\Hdr^\bullet(W)$ cannot be computed by global sections, so we have to use
hypercohomology instead. The open subsets $U_i:=X\setminus\mZ(f_i)$, $0\le i\le r$, 
form an open cover of $W$ with corresponding \v{C}ech-de Rham double complex
$$
C^{p,q}=\bigoplus_{i_0<\cdots <i_q}\Om_W^p(U_{i_0\cdots i_q})\quad\text{for all}\quad p,q\ge 0,
$$
where $U_{i_0\cdots i_q}=U_{i_0}\cap\cdots\cap U_{i_q}$.
The differentials $\ud\colon C^{p,q}\to C^{p+1,q}$ are induced by the exterior derivatives, and
the differentials $\d^q\colon C^{p,q}\to C^{p,q+1}$ are the \v{C}ech differentials.
The de Rham cohomology of $W$ is the cohomology of the total complex $\tot^\bullet(C^{\bullet,\bullet})$, i.e.,
$$
\Hdr^\ell(W)=\H^\ell(W,\Om_W^\bullet)=H^\ell(\tot^\bullet(C^{\bullet,\bullet})).
$$
\begin{lemma}\label{lem:boundHyperComplement}
Equip each $C^{p,q}$ with the shifted double filtration $S^{s,d}C^{p,q}:=F^{s-q,d-q}C^{p,q}$
for $s,d\in\N$. Then, with respect to the induced double filtration
on the total complex, we have 
$$
\ell\cdot (1,1)\in B(S^{\bullet,\bullet}\Hdr^\ell(W)).
$$
\end{lemma}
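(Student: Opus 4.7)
The plan is to reduce directly to Lemma~\ref{lem:hyperCoh} applied to the \v{C}ech-de Rham double complex $C^{\bullet,\bullet}$ equipped with the shifted filtration $S^{\bullet,\bullet}$. To use that lemma I first need to identify $H^p(C^{\bullet,q},\ud)$ and bound its standard filtration, then transfer that bound into the shifted one, and finally intersect over $p+q=\ell$.

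For the identification, each intersection $U_{i_0\cdots i_q}=\A^n\setminus\mZ(f_{i_0}\cdots f_{i_q})$ is a principal open subset of $\A^n$, hence affine. Therefore the first spectral sequence of the de Rham hypercohomology of $U_{i_0\cdots i_q}$ collapses and yields $\Hdr^\bullet(U_{i_0\cdots i_q})=H^\bullet(\Om_{k[X_1,\ldots,X_n]_{f_{i_0}\cdots f_{i_q}}}^\bullet)$. Consequently,
$$
H^p(C^{\bullet,q},\ud)=\bigoplus_{i_0<\cdots<i_q}\Hdr^p(U_{i_0\cdots i_q}).
$$
Now I invoke Lemma~\ref{lem:princOpen} for each summand: with respect to the standard double filtration $F^{\bullet,\bullet}$, we have $p\cdot(1,1)\in B(F^{\bullet,\bullet}\Hdr^p(U_{i_0\cdots i_q}))$.

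The translation to $S^{\bullet,\bullet}$ is immediate from the definition: since $S^{s,d}C^{p,q}=F^{s-q,d-q}C^{p,q}$, an element lying in $F^{a,b}$ lies in $S^{a+q,b+q}$, so $B(S^{\bullet,\bullet}\,\cdot\,)=B(F^{\bullet,\bullet}\,\cdot\,)+(q,q)$ on each $C^{p,q}$ (and this passes to cohomology in $\ud$, since $\ud$ preserves \v{C}ech degree). Thus for every $p+q=\ell$,
$$
p\cdot(1,1)+q\cdot(1,1)=\ell\cdot(1,1)\in B\bigl(S^{\bullet,\bullet}H^p(C^{\bullet,q},\ud)\bigr).
$$
Applying Lemma~\ref{lem:hyperCoh} to the first-quadrant double complex $C^{\bullet,\bullet}$ equipped with the shifted double filtration then yields
$$
\ell\cdot(1,1)\in\bigcap_{p+q=\ell}B\bigl(S^{\bullet,\bullet}H^p(C^{\bullet,q},\ud)\bigr)\subseteq B\bigl(H^\ell(\tot^\bullet(C^{\bullet,\bullet}))\bigr)=B\bigl(S^{\bullet,\bullet}\Hdr^\ell(W)\bigr),
$$
which is the claim.

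There is essentially no hard step: the content is entirely in bookkeeping the $(q,q)$-shift that converts the GDD-type bound of Lemma~\ref{lem:princOpen} (which is sharp for principal open subsets of $\A^n$) into a uniform bound $\ell\cdot(1,1)$ that can be intersected across the antidiagonal $p+q=\ell$. The mild point worth checking is that the shifted family $S^{\bullet,\bullet}$ really is a double filtration compatible with the bigrading, so that Lemma~\ref{lem:hyperCoh} is applicable; this is straightforward since the shift by $(q,q)$ on $C^{\bullet,q}$ does not interact with the \v{C}ech differential $\d^q$ up to the same shift on $C^{\bullet,q+1}$, and does not move under $\ud$ which preserves $q$.
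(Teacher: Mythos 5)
Your proposal is correct and follows essentially the same route as the paper: apply Lemma~\ref{lem:princOpen} to each affine intersection $U_{i_0\cdots i_q}$ to get $p\cdot(1,1)$ in the standard filtration, observe that the $(q,q)$-shift turns this into $\ell\cdot(1,1)$ on every antidiagonal component, and conclude via Lemma~\ref{lem:hyperCoh}. The paper's proof is just a terser version of the same bookkeeping.
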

\begin{proof}
By Lemma~\ref{lem:princOpen} we have with respect to the shifted double filtration
$$
\ell\cdot (1,1)\in B(S^{\bullet,\bullet}\Hdr^p(U_{i_0\cdots i_q}))\quad\text{for all}\quad p+q=\ell,\ 0\le i_0<\cdots <i_q\le r.
$$
Applying Lemma~\ref{lem:hyperCoh} implies the claim.
\end{proof}

In the case of a complete intersection we are able to give a more succinct description of
$\Hdr^\bullet(W)$ using sheaf cohomology as follows.
\begin{lemma}\label{lem:deRhamLocal}
Let $\dim Z=n-r-1$. Then, for $\ell>0$, the projection map 
$$
\tot^\ell(C^{\bullet,\bullet})=C^{\ell-r,r}\oplus\cdots\oplus C^{\ell,0}\twoheadrightarrow
C^{\ell-r,r}=\Om_{A_{f_0\cdots f_r}}^{\ell-r}
$$
induces an isomorphism
\begin{equation}
\Hdr^\ell(W)\stackrel{\simeq}{\longrightarrow} H^{\ell-r}(H^r(W,\Om_W^\bullet)).
\end{equation}
\end{lemma}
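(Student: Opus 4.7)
The plan is to analyse the first spectral sequence of the \v{C}ech--de Rham double complex, which from \S\ref{ss:hyperCohomology} has
$$
{}_{I}E_2^{p,q} = H^p(H^q(W, \Om_W^\bullet), \ud) \Rightarrow \Hdr^{p+q}(W),
$$
and to show that under the complete intersection hypothesis it degenerates with only the $q = r$ row contributing to $\Hdr^\ell(W)$ for $\ell > 0$.

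First I would compute the rows $H^q(W, \Om_W^p)$ using the local cohomology long exact sequence of \S\ref{ss:sheafCohomology}:
$$
\cdots \to H^{q-1}(X, \Om_X^p) \to H^{q-1}(W, \Om_W^p) \to H_Z^q(X, \Om_X^p) \to H^q(X, \Om_X^p) \to \cdots
$$
Since $X = \A^n$ is affine, $H^q(X, \Om_X^p) = 0$ for $q \ge 1$, yielding $H^{q-1}(W, \Om_W^p) \simeq H_Z^q(X, \Om_X^p)$ for $q \ge 2$ and $H^0(W, \Om_W^p) = \Om_R^p$. Because $\Om_X^p$ is locally free on the smooth variety $X = \A^n$ and $Z$ is a complete intersection of codimension $r+1$, the depth of $\Om_X^p$ along $Z$ equals $r+1$, whence $H_Z^q(X, \Om_X^p) = 0$ for $q \le r$. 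Thus $H^q(W, \Om_W^p) = 0$ for $0 < q < r$. Moreover, since $W$ is covered by the $r+1$ affines $U_0, \ldots, U_r$, \v{C}ech cohomology with respect to this cover gives $H^q(W, \Om_W^p) = 0$ for $q > r$.

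Only the rows $q = 0$ and $q = r$ of the $E_2$ page are then possibly non-zero. The $q = 0$ row contributes ${}_{I}E_2^{p, 0} = H^p(\Om_R^\bullet) = \Hdr^p(\A^n)$, which is $k$ at $p = 0$ and zero otherwise. The $q = r$ row contributes $H^p(H^r(W, \Om_W^\bullet), \ud)$. The only candidate non-zero differential on pages $E_s$ with $s \ge 2$ between these two rows is $d_{r+1}: {}_{I}E_{r+1}^{p, r} \to {}_{I}E_{r+1}^{p+r+1, 0}$, whose target is a subquotient of ${}_{I}E_2^{p+r+1, 0} = 0$ for $p \ge 0$; hence the spectral sequence degenerates at $E_2$.

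For $\ell > 0$, convergence yields $\Hdr^\ell(W) \simeq H^{\ell - r}(H^r(W, \Om_W^\bullet))$, with both sides vanishing for $\ell < r$. Because the first spectral sequence is associated to the row filtration $F^s\tot^\ell(C^{\bullet,\bullet}) = \bigoplus_{q \ge s} C^{\ell - q, q}$ and higher rows vanish, we have $F^r\tot^\ell = C^{\ell - r, r}$, so the surviving cocycles project identically onto this summand; this identifies the abstract isomorphism with the stated projection map. The main technical obstacle I anticipate is the depth estimate $H_Z^q(X, \Om_X^p) = 0$ for $q \le r$: it rests on the local freeness of $\Om_{\A^n}^p$ together with the fact that $f_0, \ldots, f_r$ forms a regular sequence because the ideal $(f_0, \ldots, f_r)$ has codimension $r+1$ in the Cohen--Macaulay ring $R$.
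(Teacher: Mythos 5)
Your argument follows the paper's proof essentially step for step: the local cohomology long exact sequence gives $H^{q-1}(W,\mF|W)\simeq H_Z^{q}(X,\mF)$ for $q>1$, the complete intersection hypothesis kills $H_Z^q$ for $q\le r$ (the paper quotes \cite[Theorem 3.8]{har:67}, which is exactly your depth criterion for a locally free sheaf along a codimension $r+1$ complete intersection), the cover of $W$ by $r+1$ affines kills everything above $q=r$, and the first spectral sequence then degenerates because the $q=0$ row is $\Hdr^\bullet(\A^n)$, which vanishes in positive degrees. Your explicit count of the only candidate differential $d_{r+1}\colon E_{r+1}^{p,r}\to E_{r+1}^{p+r+1,0}$ is a slightly more detailed version of the paper's one-line degeneration argument; both are fine.

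The one point to correct is the filtration in your last step. The spectral sequence with ${}_{I}E_1^{p,q}=H^q(W,\Om_W^p)$ (\v{C}ech cohomology computed first, $d_0=\d$) is the one associated to the filtration by the de Rham index, $F^s\tot^\ell=\bigoplus_{p\ge s}C^{p,\ell-p}$, and \emph{not} to the filtration $\bigoplus_{q\ge s}C^{\ell-q,q}$ by the \v{C}ech index that you wrote down; the latter is the filtration of the second spectral sequence ${}_{II}E$. This does not affect your conclusion, but it changes the mechanism by which the projection appears: with the correct filtration one has $F^{\ell-r}\tot^\ell=\tot^\ell$, $F^{\ell+1}\tot^\ell=0$, and $C^{\ell-r,r}=\mathrm{gr}^{\ell-r}\tot^\ell$ is the bottom graded \emph{quotient}, so since all other $E_\infty$-terms in total degree $\ell$ vanish, the edge map $H^\ell(\tot^\bullet)=F^{\ell-r}H^\ell\to\mathrm{gr}^{\ell-r}H^\ell={}_{I}E_\infty^{\ell-r,r}={}_{I}E_2^{\ell-r,r}$ is literally induced by the projection onto the summand $C^{\ell-r,r}$, which is what the lemma asserts. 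With the filtration you named, $C^{\ell-r,r}$ would instead be the deepest subobject and the natural edge map would be induced by the inclusion rather than the projection. The paper's own proof silently omits this identification, so your instinct to address it is good; just attach the argument to the right filtration.
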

\begin{proof}
Let $\mF$ be a locally free sheaf on $X$. The long exact sequence of local cohomology is
$$
\cdots\to H^{q-1}(X,\mF)\to H^{q-1}(W,\mF|W)\to H_Z^{q}(X,\mF)\to H^{q}(X,\mF)\to\cdots.
$$
Since $H^{q-1}(X,\mF)=0$ for $q>1$, we conclude
\begin{equation}\label{eq:isoSheafLocalCoh}
H^{q-1}(W,\mF|W)\simeq H_Z^{q}(X,\mF) \quad\text{for}\quad q>1.
\end{equation}
Since $Z$ is a complete intersection of codimenion $r+1$, it follows from~\cite[Theorem 3.8]{har:67},
that $H_Z^q(X,\mF)=0$ for $q< r+1$. Furthermore, since $W=X\setminus Z$ can be covered by $r+1$
affine open subsets, \v{C}ech cohomology implies $H_Z^q(X,\mF)=H^{q-1}(W,\mF|W)=0$ for $q>r+1$.
Thus
\begin{equation}\label{eq:localCohLacunary}
H_Z^{q}(X,\mF)=0 \quad\text{for}\quad q\ne r+1.
\end{equation}
The beginning of the long exact sequence together with~\eqref{eq:localCohLacunary} implies
\begin{equation}\label{eq:globalSections}
H^0(W,\mF|W)= H^0(X,\mF).
\end{equation}
Now, the first spectral sequence of hypercohomology has first term
$$
{}_{I}E_1^{pq}=H^q(W,\Om_W^p)\quad\Rightarrow\quad \Hdr^{p+q}(W).
$$
By~\eqref{eq:isoSheafLocalCoh}, \eqref{eq:localCohLacunary}, and~\eqref{eq:globalSections} we have
$$
{}_{I}E_1^{pq}=\left\{
\begin{array}{rl}
H^0(X,\Om_X^p) & \text{if } q=0, \\
H^{r}(W,\Om_W^p) & \text{if } q=r, \\
0 & \text{otherwise}.
\end{array}\right.
$$
Since $\Hdr^p(X)=H^p(H^0(X,\Om_X^p))=0$ for $p>0$, the second page ${}_{I}E_2$ has differential zero,
hence ${}_{I}E_2= {}_{I}E_\infty$, and we conclude
\begin{equation*}
\Hdr^\ell(W)={}_{I}E_2^{\ell-r,r}=H^{\ell-r}(H^{r}(W,\Om_W^\bullet))\quad\text{for}\quad \ell>0.\qedhere
\end{equation*}
\end{proof}

Lemmas~\ref{lem:boundHyperComplement} and~\ref{lem:deRhamLocal} imply
\begin{corollary}\label{cor:boundLocalCoh}
$$
(\ell-r)\cdot (1,1)\in B( H^{\ell-r}(H^r(W,\Om_W^\bullet))).
$$
\end{corollary}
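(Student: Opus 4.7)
The plan is to combine the two preceding lemmas, whose conclusions fit together via the projection $\tot^\ell(C^{\bullet,\bullet})\twoheadrightarrow C^{\ell-r,r}=\Om_{A_{f_0\cdots f_r}}^{\ell-r}$. Lemma~\ref{lem:deRhamLocal} tells us that this projection, on passing to cohomology, yields the isomorphism $\Hdr^\ell(W)\simeq H^{\ell-r}(H^r(W,\Om_W^\bullet))$, while Lemma~\ref{lem:boundHyperComplement} produces cocycle representatives in the shifted filtration level $S^{\ell,\ell}\tot^\ell(C^{\bullet,\bullet})$. The task is then to verify that projecting such a representative lands in the desired filtration level of the target.

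Concretely, given a class $c\in\Hdr^\ell(W)$, Lemma~\ref{lem:boundHyperComplement} would supply a total-complex cocycle $\om=(\om_{\ell-r,r},\om_{\ell-r+1,r-1},\ldots,\om_{\ell,0})$ with each $\om_{p,q}\in S^{\ell,\ell}C^{p,q}=F^{\ell-q,\ell-q}C^{p,q}$. In particular, its $(\ell-r,r)$-component lies in $F^{\ell-r,\ell-r}C^{\ell-r,r}$. I would then equip $H^r(W,\Om_W^{\ell-r})$ with the double filtration inherited from its description as the cokernel of the \v{C}ech differential $\d^{r-1}\colon C^{\ell-r,r-1}\to C^{\ell-r,r}$ (using that the affine cover $\{U_0,\ldots,U_r\}$ has exactly $r+1$ members), and induce a filtration on $H^{\ell-r}(H^r(W,\Om_W^\bullet))$ via~\eqref{eq:defFiltrCoh}. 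By Lemma~\ref{lem:deRhamLocal}, the component $\om_{\ell-r,r}$ represents the image of $c$, so this image sits in $F^{\ell-r,\ell-r}$, which is precisely the assertion $(\ell-r)\cdot(1,1)\in B(H^{\ell-r}(H^r(W,\Om_W^\bullet)))$.

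The only delicate point, and hence the main obstacle, is to confirm that the projection of Lemma~\ref{lem:deRhamLocal} really transports the shifted filtration on the total complex to the standard filtration on the target with precisely the index shift $(s,d)\mapsto(s-r,d-r)$. This amounts to the arithmetic identity $S^{s,d}C^{\ell-r,r}=F^{s-r,d-r}C^{\ell-r,r}$ evaluated at $(s,d)=(\ell,\ell)$, but it must be traced through the two spectral-sequence identifications implicit in the proof of Lemma~\ref{lem:deRhamLocal} to rule out any additional corrections from the intervening cokernel and cohomology operations. Once this compatibility is established, the claimed membership follows immediately from the chain of inclusions above.
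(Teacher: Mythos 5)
Your argument is correct and is precisely the combination the paper intends: the paper's entire proof of this corollary is the single line ``Lemmas~\ref{lem:boundHyperComplement} and~\ref{lem:deRhamLocal} imply,'' and your write-up fills in exactly that combination — take the total-complex cocycle in $S^{\ell,\ell}$ from Lemma~\ref{lem:boundHyperComplement}, project to the $(\ell-r,r)$-component, which by definition of the shift lies in $F^{\ell-r,\ell-r}C^{\ell-r,r}$, and invoke the isomorphism of Lemma~\ref{lem:deRhamLocal} induced by that very projection. The ``delicate point'' you flag is harmless, since Lemma~\ref{lem:deRhamLocal} asserts the isomorphism is induced by the projection on the level of complexes, so no spectral-sequence corrections intervene.
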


\section{Effective Gysin Sequence}\label{se:effGysin}
The main tool in our proof is the Gysin sequence which is the following
\begin{theorem}\label{thm:gysinSequ}
Let $X$ be a smooth irreducible variety and $Z\subseteq X$ a smooth closed equidimensional
subvariety of codimension $r$. Then there is an exact sequence
$$
\cdots\rightarrow \Hdr^p(X)\to \Hdr^p(X\setminus Z)\stackrel{\Res}{\to}
\Hdr^{p-2r+1}(Z)\to \Hdr^{p+1}(X)\to\cdots
$$
\end{theorem}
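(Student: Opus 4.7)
The plan is to derive the Gysin sequence from the long exact sequence of local cohomology applied to the algebraic de Rham complex, combined with an algebraic purity (Thom) isomorphism that identifies the local cohomology sheaves of $\Om_X^\bullet$ along $Z$ with $\Om_Z^\bullet$ up to a shift determined by the codimension. This is essentially the strategy of Hartshorne~\cite{har:67}, to which the statement is attributed, and it proceeds in two major steps.

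First, I would generalize the long exact sequence for local cohomology from \S\ref{ss:sheafCohomology} to hypercohomology of a bounded complex of sheaves. Concretely, the short exact triangle of functors
$$
R\G_Z(X,\cdot)\to R\G(X,\cdot)\to R\G(X\setminus Z,\cdot)
$$
applied to $\Om_X^\bullet$ yields an exact sequence
$$
\cdots\to \mathbb{H}_Z^p(X,\Om_X^\bullet)\to \Hdr^p(X)\to \Hdr^p(X\setminus Z)\to \mathbb{H}_Z^{p+1}(X,\Om_X^\bullet)\to\cdots,
$$
exactly as in the sheaf case. This part is formal.

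Second, and this is the main obstacle, I would prove the algebraic purity isomorphism
$$
\mathbb{H}_Z^{p}(X,\Om_X^\bullet)\simeq \Hdr^{p-2r}(Z).
$$
The key input is that since $Z\subseteq X$ is a smooth subvariety of codimension $r$, it is locally a complete intersection cut out by a regular sequence; hence the sheaves $\mathcal{H}_Z^q(\Om_X^p)$ vanish for $q\ne r$ and the remaining sheaf $\mathcal{H}_Z^r(\Om_X^p)$ can, via the Koszul/\v{C}ech description of local cohomology noted in~\S\ref{ss:sheafCohomology}, be identified with a twist of $\Om_Z^{p-r}$ by the top exterior power of the conormal bundle. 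Locally, working in an \'etale or formal neighborhood where $Z=\mZ(t_1,\ldots,t_r)$, one writes an explicit residue formula
$$
\om\cdot\frac{\ud t_1\wedge\cdots\wedge\ud t_r}{t_1\cdots t_r}\longmapsto \om|_Z,
$$
and checks that it is compatible with the exterior differential (so it induces a quasi-isomorphism of complexes of sheaves). The degeneration of the resulting spectral sequence $\mathcal{H}_Z^q(\Om_X^p)\Rightarrow\mathbb{H}_Z^{p+q}(X,\Om_X^\bullet)$ then collapses to the claimed shift by $2r$. The subtlety here is establishing independence of the local trivialization and gluing the local residues to a global map, which is where the formal completion machinery of~\S\ref{ss:completions} and the universally finite differentials enter.

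Splicing the purity isomorphism into the local cohomology long exact sequence then replaces $\mathbb{H}_Z^{p+1}(X,\Om_X^\bullet)$ by $\Hdr^{p+1-2r}(Z)=\Hdr^{p-2r+1}(Z)$, yielding precisely the exact sequence in the statement, with $\Res$ defined as the composition of the boundary map with the purity isomorphism. I expect the hardest conceptual step to be the verification of purity (both vanishing of $\mathcal{H}^q_Z(\Om_X^p)$ for $q\ne r$ and the identification of $\mathcal{H}^r_Z$), whereas the gluing and spectral sequence arguments are standard once the local picture is in place.
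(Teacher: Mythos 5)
The paper itself does not prove Theorem~\ref{thm:gysinSequ}: the statement is quoted with a pointer to Hartshorne~\cite{har:70} for an algebraic proof, and what the paper actually establishes is only an \emph{effective} version (Theorem~\ref{thm:effGysin}) in the special case of the codimension-$2$ complete intersection $\mZ(gX_0-1,f)\subseteq\A^{n+1}$. Your outline --- the long exact sequence of hypercohomology with supports in $Z$ applied to $\Om_X^\bullet$, spliced with a purity isomorphism $\mathbb{H}_Z^p(X,\Om_X^\bullet)\simeq\Hdr^{p-2r}(Z)$ --- is exactly the strategy of the cited reference, so you are on the intended track, and the first (formal) step is fine.

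One intermediate claim is false as stated and needs repair before it can carry the purity step: the individual sheaf $\mathcal{H}_Z^q(\Om_X^p)$ for $q=r$ is \emph{not} a twist of $\Om_Z^{p-r}$ by a line bundle --- it is not even coherent. Already for $X=\A^1$, $Z=\{0\}$ one has $\mathcal{H}_Z^1(\Oh_X)\simeq k[t,t^{-1}]/k[t]$, which is infinite-dimensional over $k[Z]=k$; the coherent object you have in mind is only the first term $\mathcal{E}xt^r_{\Oh_X}(\Oh_Z,\Om_X^p)$ of the direct system whose limit is $\mathcal{H}_Z^r(\Om_X^p)$. The correct purity statement lives at the level of complexes: the map $\Om_Z^{\bullet}[-2r]\to\mathcal{H}_Z^r(\Om_X^{\bullet})[-r]$ sending $\om$ to $\tilde{\om}\wedge\ud t_1\wedge\cdots\wedge\ud t_r/(t_1\cdots t_r)$ (for a local lift $\tilde{\om}$) is a quasi-isomorphism, because the higher-order polar sections of $\mathcal{H}_Z^r(\Om_X^p)$ are killed in cohomology by the de Rham differential --- this is the one-variable computation $\ud(t^{-n})=-nt^{-n-1}\ud t$ for $n\ge 1$, globalized by a K\"unneth-type argument in a formal neighbourhood. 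You do gesture at this when you speak of the residue inducing a quasi-isomorphism, so I read this as a flaw of formulation rather than of strategy; but the ``vanishing for $q\ne r$ plus coherent identification for $q=r$'' phrasing would not survive as a proof. It is worth noting that this is precisely the point where the paper's effective version has to work hardest: Lemma~\ref{lem:groth}, Corollary~\ref{cor:psi} and Lemma~\ref{lem:iso} construct the formal trivialization $\hat{A}\simeq B[[T_0,T_1]]$ explicitly so that extracting the coefficient of $\ud T_0/T_0\wedge\ud T_1/T_1$ and integrating the remaining polar terms (the long $\ud\eta$ computation in the proof of Theorem~\ref{thm:effGysin}) can be done with degree control.
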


Let us first record an easy consequence of the Gysin sequence for the case that $X$ is
the affine space $\A^n$. Since $\Hdr^p(X)=0$ for $p>0$,
Theorem~\ref{thm:gysinSequ} implies
\begin{corollary}\label{cor:ResIso}
For a smooth closed equidimensional variety $Z\subseteq \A^n$ of codimension $r$ the residue map
$$
\Res\colon\Hdr^p(\A^n\setminus Z)\stackrel{\simeq}{\rightarrow} \Hdr^{p-2r+1}(Z)
$$
is an isomorphism for all $p>0$.
\end{corollary}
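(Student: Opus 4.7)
The plan is to apply Theorem~\ref{thm:gysinSequ} with $X = \A^n$. Since $\A^n$ is smooth and irreducible, the hypotheses of the theorem are satisfied, and the relevant segment of the Gysin sequence reads
$$
\cdots\to \Hdr^p(\A^n)\to \Hdr^p(\A^n\setminus Z)\stackrel{\Res}{\to} \Hdr^{p-2r+1}(Z)\to \Hdr^{p+1}(\A^n)\to\cdots.
$$
The only additional ingredient required is the vanishing $\Hdr^q(\A^n) = 0$ for all $q>0$, i.e., the algebraic Poincar\'e lemma. Since $\A^n$ is affine, Section~\ref{ss:hyperCohomology} identifies $\Hdr^q(\A^n)$ with $H^q(\Om_R^\bullet)$ for $R = k[X_1,\ldots,X_n]$, and the latter complex is well known to be exact in positive degrees (for instance by exhibiting an explicit contracting homotopy monomial by monomial, or by induction on $n$ using the one-variable case, where $R\to R\,\ud X$, $f\mapsto f\,\ud X$, is surjective with kernel $k$).

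With this vanishing in hand, both outer terms of the four-term segment
$$
\Hdr^p(\A^n)\to \Hdr^p(\A^n\setminus Z)\stackrel{\Res}{\to} \Hdr^{p-2r+1}(Z)\to \Hdr^{p+1}(\A^n)
$$
are zero whenever $p>0$, so by exactness the residue map $\Res$ is simultaneously injective and surjective, hence an isomorphism. There is no genuine obstacle in this step: the corollary is purely a bookkeeping consequence of Theorem~\ref{thm:gysinSequ} together with the triviality of the de Rham cohomology of affine space in positive degrees, which is why the authors announce it as an \emph{easy} consequence. The only thing worth remarking is the hypothesis $p>0$, which is needed so that both $\Hdr^p(\A^n)$ and $\Hdr^{p+1}(\A^n)$ vanish; the case $p=0$ falls outside the scope of the statement because $\Hdr^0(\A^n)=k$ is non-zero and the residue map there is not forced to be an isomorphism by formal exactness alone.
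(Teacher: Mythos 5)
Your proposal is correct and follows exactly the paper's route: the paper records this corollary as an immediate consequence of Theorem~\ref{thm:gysinSequ} applied to $X=\A^n$, using the vanishing $\Hdr^q(\A^n)=0$ for $q>0$ and exactness of the sequence. Your additional justification of the algebraic Poincar\'e lemma and the remark about the case $p=0$ are correct elaborations of the same argument.
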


An algebraic proof of Theorem~\ref{thm:gysinSequ} is given in~\cite{har:70}.
Using this idea we will prove an effective version of it, i.e., we will describe
the map Res
explicitly on the level of differential forms, so that we can control its effect
on their degrees.

However, our version only deals with the case of a smooth codimension 2 complete intersection of
a very special type. In particular, we will study the following case. Denote $A:=k[X_0,\ldots,X_n]$,
let $f,g\in R:=k[X_1,\ldots,X_n]$, and consider the regular sequence $f_0:=gX_0-1$, $f_1:=f$. Set
$X:=\A^{n+1}$, $Z:=\mZ(f_0,f_1)$ and denote its coordinate ring by $B:=k[Z]$. Assume that~$f$ is
irreducible and that $\partder{f}{X_n}|g$. Note that these assumptions imply that $Z$ is a smooth
complete intersection in $X$ of codimension 2, and its vanishing ideal is $I:=I(Z)=(f_0,f_1)$,
hence $B=A/(f_0,f_1)$.

Since in our case the complement $W:=X\setminus Z$ is not affine, its de Rham cohomology
$\Hdr^\bullet(W)$ is described as in the last section. Recall that by Lemma~\ref{lem:deRhamLocal}
we have an isomorphism
\begin{equation}\label{eq:isodeRhamLocal}
\Hdr^{p+1}(W)\stackrel{\simeq}{\longrightarrow} H^{p}(H^1(W,\Om_W^\bullet))=
H^{p}\left(\frac{\Om_{A_{f_0f_1}}^\bullet}{\Om_{A_{f_0}}^\bullet+\Om_{A_{f_1}}^\bullet}\right)\quad\text{for}\quad p\ge 0.
\end{equation}
The main result of this section is
\begin{theorem}\label{thm:effGysin}
Let $f_0,f_1\in A$ be as above, and denote $d_0:=\deg f_0, d_1:=\deg f_1$.
Then, under the identification~\eqref{eq:isodeRhamLocal}, the map
$$
\Res\colon\Hdr^{p+1}(W)\to\Hdr^{p-2}(Z)\quad\text{for}\quad p\ge 0
$$
is induced by a map $\Om_{A_{f_0f_1}}^{p}\to\Om_B^{p-2}$ which takes a $p$-form $\om=\frac{\a}{(f_0f_1)^s}$
to a $(p-2)$-form $\Res(\om)$ with
\begin{equation}\label{eq:degBoundRes}
\deg\Res(\om)\le (2d_0-d_1+1)^{2s-1}\deg\a.
\end{equation}
\end{theorem}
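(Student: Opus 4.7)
The plan is to construct the residue map $\Res$ explicitly by an iterated Griffiths-type pole reduction, adapted to the codimension-$2$ complete intersection structure of $Z\subseteq\A^{n+1}$, following the algebraic template of Hartshorne~\cite{har:70} but made effective so degrees can be tracked. Under the identification~\eqref{eq:isodeRhamLocal}, every class in $\Hdr^{p+1}(W)$ is represented by a form $\om=\a/(f_0f_1)^s\in\Om_{A_{f_0f_1}}^p$, modulo forms with poles only along $f_0$ or only along $f_1$. The map $\Res$ will be defined so that, after sufficiently many reductions, $\om$ becomes a form with a simple logarithmic pole along each of $f_0$ and $f_1$, from which the double Poincar{\'e} residue descends directly to a class in $\Om_B^{p-2}$.

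The first main step is to establish the crucial Lemma~\ref{lem:groth}, which in this setting should provide an explicit decomposition of the numerator $\a$ as
\[
\a = \b_0\wedge df_0 + \b_1\wedge df_1 + \a'
\]
(modulo the kernel of the projection in~\eqref{eq:isodeRhamLocal}), where $\a'$ contributes only to a lower-pole remainder and $\b_0,\b_1$ are explicitly computable from $\a$. The hypotheses that $f_0=gX_0-1$ is monic of degree one in $X_0$ and that $\partder{f}{X_n}\mid g$ are precisely what make this decomposition effective: one eliminates $dX_0$ against $df_0$ (using the leading coefficient $g$) and $dX_n$ against $df_1$ (using $\partder{f}{X_n}$, which divides $g$), obtaining coefficients whose degrees grow by at most the factor $2d_0-d_1+1$ after clearing the denominators involving $g$ that arise. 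For a general codimension-$2$ complete intersection no such quantitatively controlled decomposition is available, which is why this special shape is essential.

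Given the lemma, the residue is built by iterating the standard pole reduction identity
\[
d\!\left(\frac{\b}{(f_0 f_1)^{s-1}}\right) = \frac{d\b}{(f_0 f_1)^{s-1}} - (s-1)\,\frac{d(f_0 f_1)\wedge \b}{(f_0 f_1)^s},
\]
which trades a form of pole order $s$ whose numerator is of the shape $d(f_0f_1)\wedge\b$ for a form of pole order $s-1$, modulo exact forms. Applying Lemma~\ref{lem:groth} at each step to put the numerator into reducible form and then invoking this identity, one iteration reduces the pole order by one at the cost of multiplying $\deg\a$ by at most $2d_0-d_1+1$. After $2s-1$ iterations the order drops to the minimum where the double Poincar{\'e} residue reads off a well-defined class in $\Om_B^{p-2}$, yielding precisely the bound~\eqref{eq:degBoundRes}.

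The hardest part is proving Lemma~\ref{lem:groth} with the tight degree bound $2d_0-d_1+1$: one must write an arbitrary numerator in $\Om_A^p$, modulo the quotient, as $\b_0\wedge df_0+\b_1\wedge df_1$ plus a lower-pole remainder with \emph{quantitative} control on $\deg\b_0,\deg\b_1$. The linear structure of $f_0$ in $X_0$ reduces one half of the problem to Euclidean division in the variable $X_0$, while the divisibility $\partder{f}{X_n}\mid g$ reduces the other half to division by $\partder{f}{X_n}$ inside $A_g$; combining these gives the required decomposition. The remaining ingredients---well-definedness on the quotient, commutation with $d$, and agreement with Hartshorne's residue up to sign---are standard and introduce no further degree growth.
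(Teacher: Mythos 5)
Your strategy --- an iterated, Griffiths-style pole reduction --- is a genuinely different route from the paper's, which instead passes to the $I$-adic completion $\hat{A}$, constructs an explicit degree-controlled isomorphism $B[[T_0,T_1]]\simeq\hat{A}$, expands $\th(\overline{\om})$ as a Laurent series in $T_0,T_1$, and reads off $\Res$ as the coefficient of $\ud T_0/T_0\wedge\ud T_1/T_1$. Note in particular that you have misidentified Lemma~\ref{lem:groth}: it is not a decomposition of numerators of differential forms, but a lifting statement for algebra homomorphisms $B\to A/I^N$ to $A/I^{N+1}$ (formal smoothness of $B$), which is what makes the power-series coordinates $T_0,T_1$ and hence the whole residue construction effective. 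More importantly, your proposal has a genuine gap at its central step. The decomposition $\a=\b_0\wedge\ud f_0+\b_1\wedge\ud f_1+\a'$ with coefficients growing by at most the factor $2d_0-d_1+1$ is asserted, not proved, and the mechanism you offer cannot work as stated: trading $\ud X_0$ for $\ud f_0=X_0\,\ud g+g\,\ud X_0$ requires inverting $g$, and trading $\ud X_n$ for $\ud f_1$ requires inverting $\partder{f}{X_n}$, but neither is a unit in $A_{f_0f_1}$. They are units only modulo $I=(f_0,f_1)$ (since $gX_0\equiv 1 \bmod f_0$), so every such division is valid only up to error terms lying in $I$, and these error terms re-enter the numerator at the next iteration with higher pole order. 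Controlling this feedback uniformly over all iterations is precisely what the paper's completion argument (Lemma~\ref{lem:groth}, Corollary~\ref{cor:psi}, Lemma~\ref{lem:iso}) accomplishes; without an equivalent device the bound $(2d_0-d_1+1)^{2s-1}\deg\a$ is not established.

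Two further points would need repair even granting the decomposition. First, the pole bookkeeping is intrinsically bigraded: one reduction step via $\ud(f_0f_1)\wedge\b=(f_1\,\ud f_0+f_0\,\ud f_1)\wedge\b$ immediately produces denominators $f_0^{\mu}f_1^{\nu}$ with $\mu\ne\nu$, leaving the class of forms $\a/(f_0f_1)^s$ you set up, so ``reduce the order of $f_0f_1$ by one per step, $2s-1$ steps suffice'' is not a well-defined induction; the target is the coefficient of bidegree $(1,1)$, and the reduction must be tracked in $\mu$ and $\nu$ separately (this is what the expansion~\eqref{eq:formExpansion} does). Second, you dismiss as ``standard'' the verification that your map is well defined on the quotient by $\Om_{A_{f_0}}^\bullet+\Om_{A_{f_1}}^\bullet$, sends closed forms to closed forms, and agrees with (indeed inverts) the Gysin map $\la([\overline{\om}])=[\overline{(\ud f_0/f_0)\wedge(\ud f_1/f_1)\wedge\om}]$; these verifications occupy most of the paper's proof and are where the closedness relations on the Laurent coefficients are actually used, so they cannot be omitted.
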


We fix the notations and assumptions of this theorem for the rest of the section.
For its proof we will need the completion $\hat{A}=\hat{A}_{I}$ of~$A$ with respect to~$I$.
Recall from~\S\ref{ss:completions} that $\hat{A}\simeq A[[T_0,T_1]]/(T_0-f_0,T_1-f_1)$, so its
elements are power series in $f_0,f_1$.
Note, however, that these power series are not unique. E.g., $f_0\in A$ can be
represented by the constant power series $f_0$ or by $T_0$. The
crucial result for us is a lemma of Grothendieck stating that there is an
algebra isomorphism $B[[T_0,T_1]]\to\hat{A}$ (cf.~\cite[Lemma II,1.2]{har:75}),
which establishes a unique power series representation for the completion. We
need to construct this isomorphism explicitly in order to bound degrees.
The technical construction is in the following statement, which is a consequence
of the fact that~$B$ is a \textit{formally smooth} $k$-algebra~\cite[Definition 19.3.1]{gro:64}.

For a tuple $x=(x_1,\ldots,x_n)$ over an affine algebra $C$ we write
$\deg(x):=\max_j\deg(x_j)$ and use an analogous notation for the order. If
$\psi\colon C\to D$ is a homomorphism, we write $\psi(x):=(\psi(x_1),\ldots,\psi(x_n))$.
For $x\in C$ we denote by~$\overline{x}$ its image in any factor algebra of $C$.

\begin{lemma}\label{lem:groth}
Let $N\in\N$ and $\psi\colon B\to A/I^N$ be an algebra homomorphism that
lifts the identity $B\to B$, i.e., the composition $B\to A/I^N\twoheadrightarrow B$
is the identity. Then $\psi$ can be lifted to an algebra homomorphism
$\tilde{\psi}\colon B\to A/I^{N+1}$, i.e., the diagram
$$
\xymatrix{
 & A/I^{N+1} \ar@{->>}[d]^\pi \\
B \ar[r]^\psi\ar@{-->}[ru]^{\tilde{\psi}} &  A/I^N   \\
}
$$
commutes.
\end{lemma}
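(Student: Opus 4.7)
The plan is to build $\tilde{\psi}$ as a correction of an arbitrary ``lift on generators'', where the correction lies in the $B$-module $I^N/I^{N+1}$ and is determined by solving a small linear system. Concretely, for $0\le i\le n$ I would pick any $y_i\in A/I^{N+1}$ with $\pi(y_i)=\psi(\bar{X}_i)$, and let $\tilde{\psi}_0\colon A\to A/I^{N+1}$ be the $k$-algebra map sending $X_i\mapsto y_i$. Since $\pi\circ\tilde{\psi}_0$ agrees with the composition $A\twoheadrightarrow B\xrightarrow{\psi} A/I^N$, the values $\tilde{\psi}_0(f_0),\tilde{\psi}_0(f_1)$ already lie in $I^N/I^{N+1}$; only these obstructions need to be killed to make $\tilde{\psi}_0$ factor through $B$.

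For the correction I would leave $y_i$ untouched for $i\ne 0,n$ and replace $y_0,y_n$ by $y_0+\delta_0,y_n+\delta_n$ with $\delta_0,\delta_n\in I^N/I^{N+1}$. Because $I\cdot(I^N/I^{N+1})=0$ and $\delta_0\delta_n\in I^{2N}\subseteq I^{N+1}$, a first-order Taylor expansion modulo $I^{N+1}$ collapses the defining equations $\tilde{\psi}(f_j)=0$ to the $B$-linear system
\[
\begin{pmatrix} g & X_0\,\partial g/\partial X_n \\ 0 & \partial f/\partial X_n \end{pmatrix}
\begin{pmatrix} \delta_0 \\ \delta_n \end{pmatrix}
=
-\begin{pmatrix} \tilde{\psi}_0(f_0) \\ \tilde{\psi}_0(f_1) \end{pmatrix}
\]
in the $B$-module $I^N/I^{N+1}$, where the partials of $f_0=gX_0-1$ and $f_1=f$ are evaluated in $B$ (this is legal because they multiply elements annihilated by $I$).

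The crucial point, and the place where the hypotheses on $f,g$ enter, is invertibility of this Jacobian over $B$: the relation $gX_0\equiv 1$ on $Z$ makes $g$ a unit in $B$, and the assumption $\partial f/\partial X_n\mid g$ then forces $\partial f/\partial X_n$ to be a unit as well, so $\det J=g\cdot\partial f/\partial X_n$ is invertible. Cramer's rule yields the required $\delta_0,\delta_n$, and the corrected map descends to $\tilde{\psi}\colon B\to A/I^{N+1}$ with $\pi\circ\tilde{\psi}=\psi$. Conceptually this is nothing more than the infinitesimal lifting criterion for the formally smooth $k$-algebra $B$, written out concretely so that the explicit formulas for $\delta_0,\delta_n$ can later be used to track degrees (which is ultimately what drives~\eqref{eq:degBoundRes}). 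The only step that could plausibly fail — solvability of the $2\times 2$ system — is exactly what the divisibility hypothesis $\partial f/\partial X_n\mid g$ is tailored to ensure, so there is no genuine obstacle.
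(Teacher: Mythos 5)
Your proposal is correct and follows essentially the same route as the paper: a first-order Taylor expansion modulo $I^{N+1}$ (using $I\cdot(I^N/I^{N+1})=0$) reduces the lifting problem to a triangular linear system over $B$ in the coordinates $X_0,X_n$, solvable because $g$ is a unit in $B$ with inverse $X_0$ and hence $\partial f/\partial X_n$, which divides $g$, is a unit as well. The paper's proof differs only in that it expands the obstruction and the correction coefficient-wise in the generators $f_0^\mu f_1^\nu$ of $I^N$, which is the form needed later for the degree bounds in Corollary~\ref{cor:psi}.
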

\begin{proof}
Since the $k$-algebra $B$ is generated by $\overline{X}_0,\ldots,\overline{X}_n$,
it is sufficient to define $\tilde{\psi}$ on these elements.
Choose $Y_0,\ldots,Y_n\in A$ such that $\psi(\overline{X}_i)=\overline{Y}_i$ in
$A/I^N$for all $0\le i\le n$. Our aim is to define
\begin{equation}\label{eq:defPsi}
\tilde{\psi}(\overline{X}_i):=\overline{Y_i+\sum_{\mu+\nu=N}a^{(i)}_{\mu\nu}f_0^\mu f_1^\nu} \quad\text{for} \quad 0\le i\le n,
\end{equation}
with suitably chosen $a^{(i)}_{\mu\nu}\in A$. Then it is clear that $\pi\circ\tilde{\psi}=\psi$.
It remains to show that one can define $\tilde{\psi}$ unambigously by~\eqref{eq:defPsi}.
This means that we have to find~$a^{(i)}_{\mu\nu}$ such that $f_0,f_1$ are mapped
to zero in $A/I^{N+1}$. Set $Y:=(Y_0,\ldots,Y_n)$, $a_{\mu\nu}:=(a^{(0)}_{\mu\nu},\ldots,a^{(n)}_{\mu\nu})$,
and look at the first condition
\begin{equation}\label{eq:cond}
f_0\mapsto \overline{f_0(Y+\sum_{\mu+\nu=N}a_{\mu\nu}f_0^\mu f_1^\nu)}=0
\quad \text{in}\quad A/I^{N+1}.
\end{equation}
By the Taylor formula we have
$$
f_0\bigg(Y+\sum_{\mu+\nu=N}a_{\mu\nu}f_0^\mu f_1^\nu\bigg)
\equiv f_0(Y)+\sum_{i=0}^n\partder{f}{X_i}(Y)\sum_{\mu+\nu=N}a^{(i)}_{\mu\nu}f_0^\mu f_1^\nu
\pmod{I^{N+1}}.
$$
Since $\overline{f_0(Y)}=\psi(\overline{f_0})=0$ in $A/I^{N}$ and
$I^{N}=(f_0^N,f_0^{N-1}f_1,\ldots,f_0f_1^{N-1},f_1^N)$, there exist $p_{\mu\nu}\in A$, $\mu+\nu=N$,
such that $f_0(Y)=\sum_{\mu+\nu=N}p_{\mu\nu}f_0^\mu f_1^\nu$ in~$A$. Furthermore, since
$\overline{Y}_i=\overline{X}_i$ in $B$, condition~\eqref{eq:cond} is satisfied if
\begin{equation}\label{eq:cond1LGS}
p_{\mu\nu}+\sum_{i=0}^n\partder{f_0}{X_i}a^{(i)}_{\mu\nu}\equiv 0\pmod{I}.
\end{equation}
Similarly, there exist $q_{\mu\nu}\in A$, $\mu+\nu=N$, such that $f_1(Y)=\sum_{\mu+\nu=N}q_{\mu\nu}f_0^\mu f_1^\nu$
in~$A$, and $f_1$ is mapped to zero if
\begin{equation}\label{eq:cond2LGS}
q_{\mu\nu}+\sum_{i=0}^n\partder{f_1}{X_i}a^{(i)}_{\mu\nu}\equiv 0\pmod{I}.
\end{equation}
In order to get very efficient degree bounds, we use the special form of the defining equations
$f_0,f_1$. In particular, recall that there exists $h\in A$ such that $\partder{f_0}{X_0}=g=h\partder{f_1}{X_n}$.
Also, note that $\partder{f_1}{X_0}=0$.
This allows us to solve the linear system of equations over $B$ consisting of~\eqref{eq:cond1LGS}
and~\eqref{eq:cond2LGS} as follows:
\begin{align}\label{eq:solLGS}
a_{\mu\nu}^{(n)} & := -q_{\mu\nu} h X_0,\nonumber \\
a_{\mu\nu}^{(i)} & := 0\quad\text{for}\quad 1\le i<n,\\
a_{\mu\nu}^{(0)} & := -X_0\big(p_{\mu\nu}+\partder{f_0}{X_n}a_{\mu\nu}^{(n)}\big).\nonumber
\end{align}
We check that these settings actually solve the system:
\begin{equation}\label{eq:check2LGS}
q_{\mu\nu}+\sum_{i=0}^n\partder{f_1}{X_i}a^{(i)}_{\mu\nu} \equiv q_{\mu\nu}-\partder{f_1}{X_n}q_{\mu\nu}h X_0
\equiv -q_{\mu\nu}f_0\equiv 0\pmod{I}.
\end{equation}
Moreover, we have
\begin{align}\label{eq:check1LGS}
p_{\mu\nu}+\sum_{i=0}^n\partder{f_0}{X_i}a^{(i)}_{\mu\nu} & \equiv p_{\mu\nu}-\partder{f_0}{X_0}X_0
\big(p_{\mu\nu}+\partder{f_0}{X_n}a_{\mu\nu}^{(n)}\big)+\partder{f_0}{X_n}a_{\mu\nu}^{(n)}\nonumber\\
 & \equiv -\big(p_{\mu\nu}+\partder{f_0}{X_n}a_{\mu\nu}^{(n)}\big) f_0
\equiv 0\pmod{I},
\end{align}
which concludes the proof.
\end{proof}

\begin{corollary}\label{cor:psi}
There exists an embedding $\psi\colon B\hookrightarrow\hat{A}$ such that
$\psi(\overline{X}_i)=\sum_{\mu,\nu=0}^\infty a_{\mu\nu}^{(i)}f_0^\mu f_1^\nu$, where $a_{\mu\nu}^{(i)}\in A$
with
$$
\deg a_{\mu\nu}^{(i)}\le 2d_0-d_1+1\quad\text{for all}\quad \mu,\nu\in\N,\ 0\le i\le n.
$$
\end{corollary}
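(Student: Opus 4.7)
The plan is to construct $\psi$ as the inverse limit of a tower of lifts $\psi_N \colon B \to A/I^N$ produced by iterating Lemma~\ref{lem:groth}, and then to read off the degree bound from the explicit formulas~\eqref{eq:solLGS}.

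First I would set $\psi_1 := \id_B \colon B \to A/I = B$ and apply Lemma~\ref{lem:groth} inductively to obtain a compatible family $\psi_N \colon B \to A/I^N$ with $\pi \circ \psi_{N+1} = \psi_N$, where $\pi \colon A/I^{N+1} \twoheadrightarrow A/I^N$ is the natural projection. Passing to the inverse limit yields an algebra homomorphism $\psi \colon B \to \hat A = \varprojlim A/I^N$. Since the canonical surjection $\hat A \twoheadrightarrow A/I = B$ retracts $\psi$ onto $\id_B$, the map $\psi$ is a section and hence an embedding. Telescoping the recursion $Y_i^{(N+1)} - Y_i^{(N)} = \sum_{\mu+\nu=N} a_{\mu\nu}^{(i)} f_0^\mu f_1^\nu$ produced by the lemma (with $a_{\mu\nu}^{(i)}$ given by~\eqref{eq:solLGS}, and $a_{00}^{(i)} := X_i$) expresses $\psi(\overline X_i)$ as the desired formal power series $\sum_{\mu,\nu\ge 0} a_{\mu\nu}^{(i)} f_0^\mu f_1^\nu$ in $\hat A$.

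For the degree bound, the assumption $g = h\,\partder{f}{X_n}$ forces $\deg h \le d_0-d_1$, and $\deg \partder{f_0}{X_n}\le d_0-1$ is immediate. Plugging into~\eqref{eq:solLGS} gives
\[
\deg a_{\mu\nu}^{(n)} \le 1 + (d_0-d_1) + \deg q_{\mu\nu}, \qquad
\deg a_{\mu\nu}^{(0)} \le 1 + \max\bigl(\deg p_{\mu\nu},\; 2d_0-d_1+\deg q_{\mu\nu}\bigr),
\]
so the target bound $2d_0-d_1+1$ reduces to ensuring $\deg q_{\mu\nu}\le 0$ and $\deg p_{\mu\nu}\le 2d_0-d_1$ uniformly in $N,\mu,\nu$. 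The base case $N=1$ is immediate: $Y_i^{(1)}=X_i$ forces $p_{10}=q_{01}=1$, $p_{01}=q_{10}=0$, and the extremal contribution $a_{01}^{(0)} = X_0^2\, h\, \partder{f_0}{X_n}$ has degree exactly $2d_0-d_1+1$, confirming tightness.

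The main obstacle is the inductive step: controlling $\deg p_{\mu\nu}$ and $\deg q_{\mu\nu}$ for $N\ge 2$. The representatives $p_{\mu\nu},q_{\mu\nu}\in A$ of the expansion coefficients of $f_0(Y^{(N)})$ and $f_1(Y^{(N)})$ in $I^N/I^{N+1}$ are determined only up to the Koszul syzygies of the regular sequence $(f_0,f_1)$, which allow exchanging $f_0$-multiples against $f_1$-multiples at adjacent indices. My plan is to combine this freedom with the structural identity $gX_0 = 1+f_0 = (\partder{f}{X_n})\,hX_0$ already exploited in~\eqref{eq:check1LGS}--\eqref{eq:check2LGS} to normalize $p_{\mu\nu},q_{\mu\nu}$ to the required low-degree form, and then verify via a Taylor expansion $f_j(Y^{(N+1)}) \equiv f_j(Y^{(N)}) + \sum_i \partder{f_j}{X_i}\, \Delta Y_i^{(N)} \pmod{I^{N+2}}$, together with the induction hypothesis $\deg\Delta Y^{(N)}\le 2d_0-d_1+1$, that the extremal-degree portions of $f_j(Y^{(N+1)})$ are annihilated by the identity $1 - \partder{f}{X_n}hX_0 = -f_0$ and thus can be absorbed into lower-degree representatives at level $N+1$. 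The crux will be checking that this normalization is globally consistent so that no hidden degree growth accumulates across levels.
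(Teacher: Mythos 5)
Your construction of $\psi$ as the inverse limit of the lifts $\psi_N$ produced by Lemma~\ref{lem:groth}, and your reduction of the degree bound to the uniform estimates $\deg q_{\mu\nu}\le 0$ and $\deg p_{\mu\nu}\le 2d_0-d_1$, coincide with the paper's proof; your base case, including the extremal term $a^{(0)}_{01}=X_0^2\,h\,\partder{f_0}{X_n}$ of degree exactly $2d_0-d_1+1$, is also correct.

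The gap is the inductive step, which you explicitly leave open. You propose to exploit the Koszul-syzygy ambiguity in the representatives $p_{\mu\nu},q_{\mu\nu}$ to renormalize them level by level, and you flag the global consistency of that normalization as an unresolved crux. This is both incomplete and aimed at the wrong difficulty: no normalization has to be chosen or tracked at all. The observation the paper uses is that the verification computations \eqref{eq:check1LGS} and \eqref{eq:check2LGS} are \emph{exact identities in $A$}, not merely congruences modulo $I$: one has $q_{\mu\nu}+\sum_i\partder{f_1}{X_i}a^{(i)}_{\mu\nu}=-q_{\mu\nu}f_0$ and $p_{\mu\nu}+\sum_i\partder{f_0}{X_i}a^{(i)}_{\mu\nu}=-\bigl(p_{\mu\nu}+\partder{f_0}{X_n}a^{(n)}_{\mu\nu}\bigr)f_0$ on the nose, because $1-\partder{f_1}{X_n}hX_0=1-gX_0=-f_0$. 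Feeding these into the first-order Taylor expansion of $f_j\bigl(Y^{(N)}+\sum_{\mu+\nu=N}a_{\mu\nu}f_0^\mu f_1^\nu\bigr)$ hands you canonical representatives at level $N+1$, namely $q^{(N+1)}_{\mu+1,\nu}=-q^{(N)}_{\mu\nu}$ and $p^{(N+1)}_{\mu+1,\nu}=-\bigl(p^{(N)}_{\mu\nu}+\partder{f_0}{X_n}a^{(n)}_{\mu\nu}\bigr)$, whence $\deg q^{(N+1)}_{\mu\nu}\le\deg q^{(N)}_{\mu-1,\nu}$ and $\deg p^{(N+1)}_{\mu\nu}\le\max\{\deg p^{(N)}_{\mu-1,\nu},\,2d_0-d_1\}$, and the induction closes in one line from the base values $q^{(1)}_{01}=1$, $q^{(1)}_{10}=0$, $p^{(1)}_{01}=0$, $p^{(1)}_{10}=1$. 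Until you either carry out this computation or make your proposed syzygy bookkeeping precise, the claimed uniform bounds on $\deg p_{\mu\nu}$ and $\deg q_{\mu\nu}$ — and hence the corollary — remain unproved.
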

\begin{proof}
We start with $\psi_1:=\id_B$ and apply Lemma~\ref{lem:groth} successively to construct the
homomorphisms $\psi_N\colon B\to A/I^N$, $N\in\N$. Together they define a homomorphism
$\psi\colon B\to\hat{A}$, which is clearly injective.
To prove the degree bound, denote by $Y^{(N)}=(Y_0^{(N)},\ldots,Y_n^{(N)})\in A^{n+1}$ representatives
of $\psi_N(\overline{X}_i)=\overline{Y_i^{(N)}}$ in $A/I^N$. Furthermore, let
$p_{\mu\nu}^{(N)}, q_{\mu\nu}^{(N)}\in A$ with
$$
f_0(Y^{(N)}) = \sum_{\mu+\nu=N}p_{\mu\nu}^{(N)}f_0^\mu f_1^\nu,\qquad
f_1(Y^{(N)}) = \sum_{\mu+\nu=N}q_{\mu\nu}^{(N)}f_0^\mu f_1^\nu.
$$
Then, equation~\eqref{eq:check2LGS} shows that
$$
\deg q_{\mu\nu}^{(N+1)}\le\deg q_{\mu-1,\nu}^{(N)}\quad \text{for all}\quad N\ge 1.
$$
Since $q_{01}^{(1)}=1$ and $q_{10}^{(1)}=0$, we conclude inductively that
$$
\deg q_{\mu\nu}^{(N)}\le 0\quad \text{for all}\quad N\ge 1.
$$
Moreover, by~\eqref{eq:check1LGS} we have
$$
\deg p_{\mu\nu}^{(N+1)} \le\max\{\deg p_{\mu-1,\nu}^{(N)},d_0-1+\deg h+1\}.
$$
Using $p_{01}^{(1)}=0$ and $p_{10}^{(1)}=1$, an induction proves
$$
\deg p_{\mu\nu}^{(N)}\le d_0+\deg h= 2d_0-d_1\quad \text{for all}\quad N\ge 1.
$$
The solution~\eqref{eq:solLGS} implies $\deg a_{\mu\nu}^{(n)}=d_0-d_1+1$, and hence for all $i$
\begin{equation*}
\deg a_{\mu\nu}^{(i)}\le 1+\max\{2d_0-d_1,d_0-1+d_0-d_1+1\}=2d_0-d_1+1.\qedhere
\end{equation*}
\end{proof}

For $a\in \hat{A}$ we write $\deg_{\mu\nu} a\le \d_{\mu\nu}$, if there exists a representation
$a=\sum_{\mu,\nu\ge 0} a_{\mu\nu} f_0^\mu f_1^\nu$ with $\deg a_{\mu\nu}\le \d_{\mu\nu}$ for all
$\mu,\nu\in\N$. Denote $\xi_i:=\psi(\overline{X}_i)$ and $\xi:=(\xi_0,\ldots,\xi_n)$. Then the
degree bound of the previous corollary reads in this notation
$\deg_{\mu\nu}(\xi)\le 2d_0-d_1+1=:\g$.

\begin{remark}
A straight-forward induction with respect to the degree of $p$ shows
\begin{equation}\label{eq:degNuPoly}
\deg_{\mu\nu} p(\xi)\le\g\deg p\quad\text{for all}\quad p\in A.
\end{equation}
\end{remark}

\begin{corollary}\label{cor:directSum}
For all $a=\sum_{\mu,\nu\ge 0} a_{\mu\nu} f_0^\mu f_1^\nu\in \hat{A}$ there exist unique $b\in B$ and
$c\in\hat{I}=I\hat{A}$ with $a=\psi(b)+c$. Furthermore, we have
\begin{equation}\label{eq:degBoundB}
\deg b\le\deg a_{00},
\end{equation}
and there exist $d,e\in\hat{A}$ with $c=d f_0+e f_1$ and
\begin{equation}\label{eq:degBoundDE}
\begin{split}
\deg_{\mu\nu} d & \le \max\{\deg a_{\mu+1,\nu},\g\deg a_{00}\}, \\
\deg_{\mu\nu} e & \le \max\{\deg a_{\mu,\nu+1},\g\deg a_{00}\}.
\end{split}
\end{equation}
\end{corollary}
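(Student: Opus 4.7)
The whole content of the statement is Grothendieck's decomposition $\hat{A}\simeq B[[T_0,T_1]]$, and my plan is to unwind the splitting $\hat{A}=\psi(B)+\hat{I}$ explicitly so as to read off both uniqueness and the required degree data. For uniqueness I note that each $\psi_N$ in Lemma~\ref{lem:groth} lifts the identity on $B$, so the composition $\pi\circ\psi$ is $\id_B$, where $\pi\colon\hat{A}\twoheadrightarrow\hat{A}/\hat{I}\cong B$ is the canonical quotient. Applying $\pi$ to $a=\psi(b)+c$ with $c\in\hat{I}=\ker\pi$ forces $b=\pi(a)$ and hence $c=a-\psi(b)$. The same calculation produces $b$ in the first place: from the given representation $a=\sum a_{\mu\nu}f_0^\mu f_1^\nu$ one reads off $\pi(a)=\overline{a_{00}}\in B$, which establishes existence and yields $\deg b\le\deg a_{00}$, i.e.\ \eqref{eq:degBoundB}.

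The central idea for bounding $d$ and $e$ is to choose the specific lift $b':=a_{00}\in A$ of $b$ and to expand $\psi(b)=b'(\xi_0,\ldots,\xi_n)$ by Taylor's formula around $(X_0,\ldots,X_n)$. Since $\xi_i-X_i\in\hat{I}$ by Corollary~\ref{cor:psi}, this expansion yields a representation
\[
\psi(b)=b'+\sum_{(\mu,\nu)\ne(0,0)}\tilde{b}_{\mu\nu}f_0^\mu f_1^\nu
\]
whose constant term is exactly $b'=a_{00}$, with higher-order coefficients satisfying $\deg\tilde{b}_{\mu\nu}\le\g\deg a_{00}$; this last bound comes from a mild refinement of the induction proving \eqref{eq:degNuPoly}, where the identity $(\xi_i)_{00}=X_i$ (readable off the construction in Corollary~\ref{cor:psi}) and multiplicativity allow one to maintain simultaneously $(p(\xi))_{00}=p$ and $\deg(p(\xi))_{\mu\nu}\le\g\deg p$. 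With this the constant terms cancel in $c:=a-\psi(b)=\sum_{(\mu,\nu)\ne(0,0)}c_{\mu\nu}f_0^\mu f_1^\nu$, where $c_{\mu\nu}:=a_{\mu\nu}-\tilde{b}_{\mu\nu}$ obeys $\deg c_{\mu\nu}\le\max\{\deg a_{\mu\nu},\g\deg a_{00}\}$. I then collect all terms of $c$ with $\mu\ge 1$ into $f_0\cdot d$ and the remaining $(\mu=0,\nu\ge 1)$ terms into $f_1\cdot e$ by setting
\[
d:=\sum_{\mu,\nu\ge 0}c_{\mu+1,\nu}f_0^\mu f_1^\nu,\qquad e:=\sum_{\nu\ge 0}c_{0,\nu+1}f_1^\nu,
\]
so that $f_0 d+f_1 e=c$ is immediate while \eqref{eq:degBoundDE} drops out directly from the bound on $c_{\mu\nu}$ (for $e$ at $\mu\ge 1$ no terms appear, making the stated bound trivial).

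The main obstacle — and the whole reason for the clever choice $b'=a_{00}$ — is forcing the cancellation $c_{00}=0$. With an arbitrary lift of $b$ one would only have $c_{00}\in I$, and would then be stuck with an ideal-membership problem $c_{00}=\alpha f_0+\beta f_1$ for which no good degree bound on $\alpha,\beta$ is available through the tools at hand; the refined Taylor observation is precisely what bypasses it.
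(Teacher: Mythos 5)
Your proposal is correct and follows essentially the same route as the paper: take $b=\pi(a)=\pi(a_{00})$, observe $\psi(b)=a_{00}(\xi)$, bound its coefficients via \eqref{eq:degNuPoly}, and read off $d,e$ from the resulting representation of $c=a-a_{00}(\xi)$. Your explicit remark that $(\xi_i)_{00}=X_i$, hence $(a_{00}(\xi))_{00}=a_{00}$, so the constant term of $c$ cancels, is a detail the paper leaves implicit but uses in exactly the same way.
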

\begin{proof}
We have the exact sequence of $\hat{A}$-modules
$$
0\longrightarrow \hat{I}\longrightarrow\hat{A}\stackrel{\pi}{\longrightarrow}
B\longrightarrow 0,
$$
which splits by the homomorphism $\psi$. For this reason, $\hat{A}\simeq B\oplus \hat{I}$, and
the existence and uniqueness of the claimed representation follows. Note that if
$a=\psi(b)+c$, then $b=\pi(a)=\pi(a_{00})$. This implies~\eqref{eq:degBoundB}.

Since $\psi$ and $\pi$ are $k$-algebra homomorphisms and $a_{00}$ is a polynomial, we
have $\psi(b)=\psi(\pi(a_{00}))=a_{00}(\xi)$, thus $c=a-a_{00}(\xi)$,
and using~\eqref{eq:degNuPoly} we conclude
$\deg_{\mu,\nu} c\le\max\{\deg a_{\mu\nu},\g\deg a_{00}\}$, which yields~\eqref{eq:degBoundDE}.
\end{proof}

Now we define the homomorphism
$$
\hat{\psi}\colon B[[T_0,T_1]]\to\hat{A},\quad\sum_{\mu,\nu\ge 0} b_{\mu\nu} T_0^\mu T_1^\nu\mapsto
\sum_{\mu,\nu\ge 0} \psi(b_{\mu\nu}) f_0^\mu f_1^\nu.
$$
\begin{lemma}\label{lem:iso}
The homomorphism $\hat{\psi}$ is an isomorphism. For $a\in A$
we have 
$$
\deg_{\mu\nu} \hat{\psi}^{-1}(a) \le \g^{\mu+\nu}\deg a\quad\text{for}\quad\mu,\nu\ge 0.
$$
\end{lemma}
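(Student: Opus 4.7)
The plan is to construct the inverse $\hat{\psi}^{-1}$ by iterating Corollary~\ref{cor:directSum}, and to propagate the degree bounds \eqref{eq:degBoundB}--\eqref{eq:degBoundDE} through this iteration.

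First, I would build $\hat{\psi}^{-1}$ layer by layer. Given $a\in\hat{A}$, Corollary~\ref{cor:directSum} yields unique $b_{00}\in B$ and $d^{(0)},e^{(0)}\in\hat{A}$ with $a=\psi(b_{00})+d^{(0)}f_0+e^{(0)}f_1$. Applying the same corollary recursively to $d^{(0)}$ and $e^{(0)}$, and then to their own remainders, after $N$ steps one obtains
\[
a=\sum_{\mu+\nu\le N}\psi(b_{\mu\nu})f_0^\mu f_1^\nu+r_N\quad\text{with}\quad r_N\in\hat{I}^{N+1}.
\]
Since $\bigcap_N\hat{I}^N=0$ in the $I$-adic completion, the formal series $\sum_{\mu,\nu\ge 0} b_{\mu\nu}T_0^\mu T_1^\nu\in B[[T_0,T_1]]$ is a well-defined preimage of $a$ under $\hat{\psi}$. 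Injectivity is symmetric: reducing an identity $\hat{\psi}(\sum b_{\mu\nu}T_0^\mu T_1^\nu)=0$ modulo $\hat{I}$ and invoking the direct sum $\hat{A}=\psi(B)\oplus\hat{I}$ from Corollary~\ref{cor:directSum} forces $b_{00}=0$, and the higher-order layers are killed inductively by the same uniqueness.

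For the degree bound on $a\in A$ with $d:=\deg a$, I would induct on $N$ with the invariant that every level-$N$ remainder coefficient $c\in\hat{A}$ (that is, each $c$ appearing as the coefficient of some $f_0^\mu f_1^\nu$ with $\mu+\nu=N$ in the step-$N$ remainder) satisfies $\deg_{\alpha\beta}c\le\g^N d$ for all $\alpha,\beta\ge 0$. The base case $N=0$ is trivial because the only level-$0$ coefficient is $a$ itself, with $\deg a=d=\g^0 d$. For the inductive step, \eqref{eq:degBoundB} gives $\deg b_{\mu\nu}\le\g^N d$, and \eqref{eq:degBoundDE} combined with the inductive hypothesis bounds the new $f_0$- and $f_1$-parts by $\max\{\g^N d,\g\cdot\g^N d\}=\g^{N+1}d$. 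Since each level-$(N+1)$ coefficient is a sum of at most two such parts (one coming from an $f_0$-decomposition at level $N$ and one from an $f_1$-decomposition), the invariant propagates to level $N+1$.

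The main obstacle is the combinatorial bookkeeping when identifying which contributions of $(d^{(\mu,\nu)}f_0+e^{(\mu,\nu)}f_1)f_0^\mu f_1^\nu$ land in which level-$(N+1)$ coefficient of the remainder. This is essentially an accounting exercise, and since the degree of a sum is bounded by the maximum of its summand degrees, the uniform bound $\g^{N+1}d$ survives the recombination. Once this is verified, the conclusion $\deg_{\mu\nu}\hat{\psi}^{-1}(a)\le\g^{\mu+\nu}\deg a$ at level $\mu+\nu$ is immediate from \eqref{eq:degBoundB} applied to the level-$(\mu+\nu)$ coefficients.
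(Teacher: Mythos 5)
Your construction of the preimage by iterating Corollary~\ref{cor:directSum} and your propagation of the degree bound via \eqref{eq:degBoundB} and \eqref{eq:degBoundDE} are exactly the paper's argument for surjectivity and for the estimate $\deg_{\mu\nu}\hat{\psi}^{-1}(a)\le\g^{\mu+\nu}\deg a$; that part is correct as sketched.

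The gap is in injectivity. Corollary~\ref{cor:directSum} gives uniqueness only of the splitting $\hat{A}=\psi(B)\oplus\hat{I}$, i.e.\ of the component $b_{00}$ and of the remainder $c\in\hat I$; it does \emph{not} assert uniqueness of the elements $d,e$ with $c=df_0+ef_1$, and they are genuinely not unique (e.g.\ $f_1\cdot f_0+(-f_0)\cdot f_1=0$). So after you have forced $b_{00}=0$ and are left with $\sum_{\mu+\nu\ge 1}\psi(b_{\mu\nu})f_0^\mu f_1^\nu=0$, ``the same uniqueness'' does not let you conclude $b_{10}=b_{01}=0$, let alone kill the higher layers. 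What is actually needed is that the classes of $f_0^\mu f_1^\nu$ with $\mu+\nu=N$ are linearly independent over $\psi(B)$ in $\hat{I}^N/\hat{I}^{N+1}$ — the complete-intersection property of the pair $(f_0,f_1)$ — and this requires a separate argument. The paper proves precisely this implication in \eqref{eq:injInd}: reduce modulo $(f_0)$, use that $f_1$ is a non-zerodivisor modulo $(f_0)$ and that $f_0$ is a non-zerodivisor in $\hat A$, and invoke the injectivity of $\psi_1\colon B\to\hat A/\hat I$ to peel off one coefficient at a time. You need to supply this step; without it the claim that $\hat\psi$ is injective is unsupported.
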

\begin{proof}
We first prove that $\hat{\psi}$ is injective. It is clear that $\psi_N\colon B\to A/I^N=\hat{A}/\hat{I}^N$
is injective for all $N\ge 1$. Note also, that by construction the diagram
$$
\xymatrix{
B\ \ar@{>->}[r]^\psi \ar@{>->}[rd]^{\psi_N} & \hat{A} \ar@{->>}[d] \\
 & \hat{A}/\hat{I}^N \\
}
$$
commutes.
We show that for all $N\ge 1$ and all $b_{\mu\nu}\in B$, $\mu+\nu=N-1$, we have
\begin{equation}\label{eq:injInd}
\sum_{\mu+\nu= N-1}\psi(b_{\mu\nu})f_0^\mu f_1^\nu\equiv 0\pmod{\hat{I}^{N}}\quad\Rightarrow\quad
b_{\mu\nu}=0\text{ for all } \mu,\nu.
\end{equation}
For $N=1$ we have $0\equiv \psi(b_{00})\equiv\psi_1(b_{00})\pmod{\hat{I}}$, hence
$b_{00}=0$ by the injectivity of $\psi_1$. Now assume that~\eqref{eq:injInd} is true for some
$N\ge 1$,
and assume that $\sum_{\mu+\nu=N}\psi(b_{\mu\nu})f_0^\mu f_1^\nu\equiv 0\pmod{\hat{I}^{N+1}}$.
Reducing mod $(f_0)$ yields $\psi(b_{0,N})f_1^N \equiv 0\pmod{(f_0)}$. Since $f_1$ is a
non-zerodivisor mod $(f_0)$, we conclude $\psi(b_{0,N}) \equiv 0\pmod{(f_0)}$,
thus $0\equiv\psi(b_{0,N}) \equiv\psi_1(b_{0,N})\pmod{\hat{I}}$. Injectivity of $\psi_1$ implies
$b_{0,N}=0$. Now we write 
$$
\psi(b_{N,0})f_0^N+\psi(b_{\N-1,1})f_0^{N-1} f_1+\cdots+\psi(b_{1,N-1})f_0 f_1^{N-1}=f_0\cdot a,
$$
where $a\in\hat{A}$ has the form of the assumption in~\eqref{eq:injInd}. 
Since $f_0 a\equiv 0\pmod{\hat{I}^{N+1}}$, there exist $g_{\la\eta}\in \hat{A}$ with
$f_0 a=\sum_{\la+\eta=N+1} g_{\la\eta}f_0^\la f_1^\eta$.
Since $f_1$ is a non-zerodivisor mod $(f_0)$, we have $g_{0,N+1}\equiv 0\pmod{(f_0)}$, and since $f_0$ is a
non-zerodivisor, we infer
$a\equiv 0\pmod{\hat{I}^{N}}$. The induction hypothesis implies $b_{\mu\nu}=0$ for all
$\mu,\nu$ with $\mu+\nu=N$ and $\mu>0$, which completes the proof of~\eqref{eq:injInd}.

Now let $b=\sum_{\mu,\nu\ge 0}b_{\mu\nu}T_0^\mu T_1^\nu\in B[[T_0,T_1]]$ with $\hat{\psi}(b)=0$. We
apply~\eqref{eq:injInd} inductively to conclude that $b=0$.

To show surjectivity, let $a\in\hat{A}$ and construct a preimage $\sum_{\mu,\nu} b_{\mu\nu} T_0^\mu T_1^\nu\in B[[T_0,T_1]]$
of~$a$ under $\hat{\psi}$.
We find the $b_{\mu\nu}$ successively by
applying Corollary~\ref{cor:directSum}. Let $b_{00}\in B$ and $d_{10},d_{01}\in \hat{A}$ with
$a=\psi(b_{00})+d_{10} f_0+d_{01}f_1$. 
Assume inductively, that for some $N\ge 1$ we have constructed $b_{\mu\nu}\in B$ for $\mu+\nu<N$,
and $d_{\mu\nu}\in\hat{A}$ for $\mu+\nu=N$, such that
\begin{equation}\label{eq:successiveCoeffInd}
a=\sum_{\mu+\nu<N}\psi(b_{\mu\nu})f_0^\mu f_1^\nu+\sum_{\mu+\nu=N} d_{\mu\nu}f_0^\mu f_1^\nu.
\end{equation}

Then, for all $\mu,\nu$ with $\mu+\nu=N$ we obtain from Corollary~\ref{cor:directSum} elements
$b_{\mu\nu}\in B$ and $d_{\mu\nu}^0,d_{\mu\nu}^1\in\hat{A}$ such that
$$
d_{\mu\nu}=\psi(b_{\mu\nu})+d_{\mu\nu}^0f_0+d_{\mu\nu}^1f_1.
$$
Plugging into~\eqref{eq:successiveCoeffInd} yields
$$
a=\sum_{\mu+\nu\le N}\psi(b_{\mu\nu})f_0^\mu f_1^\nu+\sum_{\mu+\nu=N}(d_{\mu\nu}^0f_0+d_{\mu\nu}^1f_1)f_0^\mu f_1^\nu,
$$
which is of the form~\eqref{eq:successiveCoeffInd} for $N+1$ and hence completes the induction.
We have $\sum_{\mu,\nu\ge 0} \psi(b_{\mu\nu}) f_0^\mu f_1^\nu=a$, since this equality holds modulo
$\hat{I}^N$ for all $N\ge 1$.

Now assume that $a\in A$. Then we claim
\begin{equation}\label{eq:degD}
\deg_{\la\eta}(d_{\mu\nu})\le\g^{\mu+\nu}\deg a\quad\text{for all}\quad \mu+\nu=N,
\end{equation}
which for $N=1$ follows directly from~\eqref{eq:degBoundDE}.
Assuming~\eqref{eq:degD} for some $N\ge 1$, \eqref{eq:degBoundDE} implies
$$
\deg_{\la\eta}d_{\mu\nu}^0\le\max\{\deg_{\la+1,\eta} d_{\mu\nu},\g\deg_{00} d_{\mu\nu}\}\le\g^{\mu+\nu+1}\deg a,
$$
and $d_{\mu\nu}^0$ contributes to $d_{\mu+1,\nu}$. A similar estimate holds for $d_{\mu\nu}^1$, which
completes the proof of~\eqref{eq:degD}.

Finally, \eqref{eq:degD} and~\eqref{eq:degBoundB} yield
\begin{equation*}
\deg(b_{\mu\nu})\le\deg_{00} d_{\mu\nu}\le\g^{\mu+\nu}\deg a\quad\text{for all}\quad \mu,\nu\in\N.\qedhere
\end{equation*}
\end{proof}

\begin{proof}[Proof of Theorem~\ref{thm:effGysin}]
We prove the Theorem from scratch by constructing the residue map and checking that it is an
isomorphism.

W.l.o.g.~we can assume $p\ge 2$. As stated in the Theorem, we identify
$$
\Hdr^{p+1}(W)\simeq H^{p}\left(\frac{\Om_{A_{f_0f_1}}^\bullet}{\Om_{A_{f_0}}^\bullet+\Om_{A_{f_1}}^\bullet}\right).
$$
Consider the map
$$
\la\colon \Hdr^{p-2}(Z)\to H^{p}\left(\frac{\Om_{A_{f_0f_1}}^\bullet}{\Om_{A_{f_0}}^\bullet+\Om_{A_{f_1}}^\bullet}\right), 
\quad [\overline{\om}]\mapsto\left[\overline{\frac{\ud f_0}{f_0}\wedge\frac{\ud f_1}{f_1}\wedge\om}\right],
$$
where $\om\in\Om_A^{p-2}$, $\overline{\om}$ denotes its image in $\Om_B^{p-2}$, and 
$[\overline{\om}]$ the cohomology class in $\Hdr^{p-2}(Z)$ it represents. A similar notation is
used on the right hand side.

We first show that the map $\la$ is well-defined.
If $\om\in\Om_A^{p-2}$ represents the zero cohomology class in $\Hdr^{p-2}(Z)$, then
$\overline{\om}=\ud\overline{\eta}$ for some $\eta\in\Om_A^{p-3}$. This means that $\om-\ud\eta$ is
contained in the differential graded ideal generated by $I$ and $\ud I$. Using the formula
$\ud f_i\wedge\a=\ud(f_i\a)-f_i\ud \a$,
we can assume that there exist $\a,\b\in\Om_A^{p-2}$ with $\om-\ud\eta=f_0\a+f_1\b$. Then 
\begin{align*}
\frac{\ud f_0}{f_0}\wedge\frac{\ud f_1}{f_1}\wedge\om & =\ud f_0\wedge\frac{\ud f_1}{f_1}\wedge\a+
\frac{\ud f_0}{f_0}\wedge\ud f_1\wedge\b+
\frac{\ud f_0}{f_0}\wedge\frac{\ud f_1}{f_1}\wedge\ud\eta \\
 & \equiv\ud\left(\frac{\ud f_0}{f_0}\wedge\frac{\ud f_1}{f_1}\wedge\eta\right)
\bmod\left(\Om_{A_{f_0}}^\bullet+\Om_{A_{f_1}}^\bullet\right),
\end{align*}
so it maps to zero in the cohomology on the right hand side.
Furthermore, since $\ud f_i/f_i$ is exact, one easily checks that $\la$ sends closed (exact) forms
to closed (exact) ones.

The residue map will be the inverse of $\la$. To construct it, note that the isomorphism of
Lemma~\ref{lem:iso} induces a homomorphism $\Om_{A_{f_0f_1}}^\bullet\hookrightarrow\hat{\Om}_{\hat{A}_{f_0f_1}}^\bullet
\stackrel{\simeq}{\longrightarrow}\Om_{B[[T_0,T_1]]_{T_0T_1}}^\bullet$
and as a result an isomorphism
$$
\th\colon\frac{\Om_{A_{f_0f_1}}^\bullet}{\Om_{A_{f_0}}^\bullet+\Om_{A_{f_1}}^\bullet}\simeq
\frac{\hat{\Om}_{\hat{A}_{f_0f_1}}^\bullet}{\hat{\Om}_{\hat{A}_{f_0}}^\bullet+\hat{\Om}_{\hat{A}_{f_1}}^\bullet}
\stackrel{\simeq}{\longrightarrow}
\frac{\hat{\Om}_{B[[T_0,T_1]]_{T_0T_1}}^\bullet}{\hat{\Om}_{B[[T_0,T_1]]_{T_0}}^\bullet+\hat{\Om}_{B[[T_0,T_1]]_{T_1}}^\bullet}.
$$
Define
$$
\Res\colon H^p\left(\frac{\Om_{A_{f_0f_1}}^\bullet}{\Om_{A_{f_0}}^\bullet+\Om_{A_{f_1}}^\bullet}\right)
\longrightarrow\Hdr^{p-2}(Z)
$$
as follows. For a form $\om\in\Om_{A_{f_0f_1}}^p$ write
\begin{equation}\label{eq:formExpansion}
\th(\overline{\om})=\sum_{\mu,\nu\ge 1}(\a_{\mu\nu}+\b_{\mu\nu}\wedge\ud T_0+\g_{\mu\nu}\wedge\ud T_1+
\d_{\mu\nu}\wedge\ud T_0\wedge\ud T_1)T_0^{-\mu} T_1^{-\nu}
\end{equation}
with $\a_{\mu\nu},\b_{\mu\nu},\g_{\mu\nu},\d_{\mu\nu}\in\Om_B^\bullet$, only finitely many non-zero, 
and map $\Res([\overline{\om}]):=[\d_{1,1}]$. 
For the proof that this maps closed forms to closed forms,
see below.

To prove that this defines the inverse of $\la$, it suffices to show $\Res\circ\la=\id$ and
$\la\circ\Res=\id$ (in particular, this implies well-definedness). First, for
$[\overline{\om}]\in \Hdr^{p-2}(Z)$ we have
$$
\Res\circ\la([\overline{\om}])=\Res\left(\left[\overline{\frac{\ud f_0}{f_0}\wedge\frac{\ud f_1}{f_1}\wedge\om}\right]\right)=[\overline{\om}],
$$
since
$\th(\overline{\frac{\ud f_0}{f_0}\wedge\frac{\ud f_1}{f_1}\wedge\om})=\overline{\frac{\ud T_0}{T_0}\wedge\frac{\ud T_1}{T_1}\wedge\overline{\om}}$.

On the other hand, let $\om\in\Om_{A_{f_0f_1}}^p$ be a form with $\ud\om\in\Om_{A_{f_0}}^p+\Om_{A_{f_1}}^p$.
Writing $\th(\overline{\om})$ in the form~\eqref{eq:formExpansion} and differentiating yields
\begin{align*}
\ud\th(\overline{\om}) = & \sum_{\mu,\nu\ge 1}\Big((\ud\a_{\mu\nu}+\ud\b_{\mu\nu}\wedge\ud T_0+\ud\g_{\mu\nu}\wedge\ud T_1+
\ud\d_{\mu\nu}\wedge\ud T_0\wedge\ud T_1)T_0^{-\mu} T_1^{-\nu} \\
 & -(-1)^p\mu(\a_{\mu\nu}+\g_{\mu\nu}\wedge\ud T_1)T_0^{-\mu-1} T_1^{-\nu}\ud T_0 \\
 & -(-1)^p\nu(\a_{\mu\nu}+\b_{\mu\nu}\wedge\ud T_0)T_0^{-\mu} T_1^{-\nu-1}\ud T_1\Big) \\
 = & \sum_{\mu,\nu\ge 1}T_0^{-\mu} T_1^{-\nu}\Big(\ud\a_{\mu\nu}+(\ud\b_{\mu\nu}+(-1)^{p+1}(\mu-1)\a_{\mu-1,\nu})\wedge\ud T_0 \\
 & +
(\ud\g_{\mu\nu}+(-1)^{p+1}(\nu-1)\a_{\mu,\nu-1})\wedge\ud T_1 \\
 &  + (\ud\d_{\mu\nu} +(-1)^p(\mu-1)\g_{\mu-1,\nu}+(-1)^{p+1}(\nu-1)\b_{\mu,\nu-1})\wedge\ud T_0\wedge\ud T_1\Big).
\end{align*}
Since in all terms of this expression both $T_0$ and $T_1$ have negative exponents and are
contained in $\hat{\Om}_{B[[T_0,T_1]]_{T_0}}^p+\hat{\Om}_{B[[T_0,T_1]]_{T_1}}^p$, they must be zero in
$\hat{\Om}_{B[[T_0,T_1]]_{T_0T_1}}^p$. Among others, this implies the relations
\begin{gather}
\ud\b_{\mu\nu}+(-1)^{p+1}(\mu-1)\a_{\mu-1,\nu} = 0,\label{eq:closednessCond2} \\
\ud\d_{\mu\nu} +(-1)^p(\mu-1)\g_{\mu-1,\nu}+(-1)^{p+1}(\nu-1)\b_{\mu,\nu-1} = 0\label{eq:closednessCond4}
\end{gather}
for all $\mu,\nu\ge 1$. In particular, \eqref{eq:closednessCond4} shows that $\d_{1,1}$ is closed.
We have to show that all other terms of $\om$ can be integrated. To do so, define
$$
\eta:=\sum_{\mu\ge 2}\sum_{\nu\ge 1}(-1)^{p}\frac{T_0^{1-\mu}}{1-\mu}T_1^{-\nu}(-\b_{\mu\nu}+\d_{\mu\nu}\wedge\ud T_1)
+\sum_{\nu\ge 2}(-1)^{p+1}T_0^{-1}\frac{T_1^{1-\nu}}{1-\nu}\d_{1,\nu}\wedge\ud T_0.
$$
We check
\begin{align*}
\ud\eta = & \sum_{\mu\ge 2}\sum_{\nu\ge 1}(-1)^p\Big((T_0^{-\mu}T_1^{-\nu}\ud T_0-\frac{T_0^{1-\mu}}{1-\mu}\nu T_1^{-\nu-1}\ud T_1)\wedge(-\b_{\mu\nu}+\d_{\mu\nu}\wedge\ud T_1) \\
 & +\frac{T_0^{1-\mu}}{1-\mu}T_1^{-\nu}(-\ud\b_{\mu\nu}+\ud\d_{\mu\nu}\wedge\ud T_1)\Big) \\
 & +\sum_{\nu\ge 2}(-1)^{p+1}\Big((-T_0^{-2}\frac{T_1^{1-\nu}}{1-\nu}\ud T_0+T_0^{-1}T_1^{-\nu}\ud T_1)
\wedge\d_{1,\nu}\wedge\ud T_0 \\
 & +T_0^{-1}\frac{T_1^{1-\nu}}{1-\nu}\ud\d_{1,\nu}\wedge\ud T_0\Big) \\
 = & \sum_{\mu\ge 2}\sum_{\nu\ge 1}\Big((-1)^{p+1}\frac{T_0^{1-\mu}}{1-\mu}T_1^{-\nu}\ud\b_{\mu\nu}
+T_0^{-\mu}T_1^{-\nu}\b_{\mu\nu}\wedge\ud T_0 \\
 & +\frac{T_0^{1-\mu}}{1-\mu}(-\nu T_1^{-\nu-1}\b_{\mu\nu}+(-1)^pT_1^{-\nu}\ud\d_{\mu\nu})\wedge\ud T_1
 +T_0^{-\mu}T_1^{-\nu}\d_{\mu\nu}\ud T_0\wedge \ud T_1\Big) \\
 & +\sum_{\nu\ge 2}T_0^{-1}\Big(T_1^{-\nu}\d_{1,\nu}\wedge\ud T_0\wedge\ud T_1+(-1)^{p+1}\frac{T_1^{1-\nu}}{1-\nu}\ud\d_{1,\nu}\wedge\ud T_0\Big).
\end{align*}
Using \eqref{eq:closednessCond2} and \eqref{eq:closednessCond4} for $\mu=1$ we obtain
\begin{align*}
\ud\eta = & \sum_{\mu \ge 2}\sum_{\nu\ge 1}\Big(T_0^{1-\mu}T_1^{-\nu}\a_{\mu-1,\nu}
+T_0^{-\mu}T_1^{-\nu}\b_{\mu\nu}\wedge\ud T_0 \\
 & +\frac{T_0^{1-\mu}}{1-\mu}T_1^{-\nu}(-(\nu-1)\b_{\mu,\nu-1}+(-1)^p\ud\d_{\mu\nu})\wedge\ud T_1\Big) \\
 &  +\sum_{\substack{\mu,\nu\ge 1 \\ (\mu,\nu)\ne(1,1)}}T_0^{-\mu}T_1^{-\nu}\d_{\mu\nu}\ud T_0\wedge \ud T_1 
+\sum_{\nu\ge 2}T_0^{-1}T_1^{1-\nu}\b_{1,\nu-1}\wedge\ud T_0 \\
\end{align*}
\begin{align*}
 \stackrel{\eqref{eq:closednessCond4}}{=} & \sum_{\mu\ge 1}\sum_{\nu=1}^sT_0^{-\mu}T_1^{-\nu}(\a_{\mu,\nu}
+\b_{\mu\nu}\wedge\ud T_0) +\sum_{\mu\ge 2}\sum_{\nu\ge 1}T_0^{1-\mu}T_1^{-\nu}\g_{\mu-1,\nu}\wedge\ud T_1 \\
 &  +\sum_{\substack{\mu,\nu\ge 1 \\ (\mu,\nu)\ne(1,1)}}T_0^{-\mu}T_1^{-\nu}\d_{\mu\nu}\ud T_0\wedge \ud T_1 \\
 = &\; \th(\overline{\om})-T_0^{-1}T_1^{-1}\d_{1,1}\ud T_0\wedge \ud T_1,
\end{align*}
thus $\la\circ\Res([\overline{\om}])=[\overline{\om}]$.

In order to prove~\eqref{eq:degBoundRes}, we have to bound the degree of $\d_{1,1}$ in~\eqref{eq:formExpansion}.
Note that by linearity it suffices to consider terms of the form
$$
\om=\frac{a}{(f_0f_1)^s}\ud X_{i_1}\wedge\cdots\wedge\ud X_{i_p},\quad a\in A,\ 0\le i_1<\cdots<i_p\le n,\ s\ge 1.
$$
By Lemma~\ref{lem:iso} we have
\begin{align*}
b:= & \hat{\psi}^{-1}(a)=\sum_{\mu,\nu\ge 0} b_{\mu\nu} T_0^\mu T_1^\nu,\\
\Xi_i:= & \hat{\psi}^{-1}(X_i)=\sum_{\mu,\nu\ge 0} b_{\mu\nu}^{(i)} T_0^\mu T_1^\nu\in B[[T_0,T_1]],
\end{align*}
where
\begin{equation}\label{eq:boundCoeff}
\deg b_{\mu\nu}\le \g^{\mu+\nu}\deg a,\quad \deg b_{\mu\nu}^{(i)}\le \g^{\mu+\nu}.
\end{equation}
It follows
$$
\th(\overline{\om})=\frac{b}{(T_0T_1)^s}\ud \Xi_{i_1}\wedge\cdots\wedge\ud \Xi_{i_p}.
$$
Moreover, we have
\begin{align*}
\ud \Xi_i & = \sum_{\mu,\nu\ge 0} (\ud b_{\mu\nu}^{(i)} T_0^\mu T_1^\nu+b_{\mu\nu}^{(i)} \mu T_0^{\mu-1} T_1^\nu\ud T_0
+b_{\mu\nu}^{(i)} \nu T_0^{\mu} T_1^{\nu-1}\ud T_1) \\
 & = \sum_{\mu,\nu\ge 0} (\ud b_{\mu\nu}^{(i)}+(\mu+1)b_{\mu+1,\nu}^{(i)}\ud T_0
+(\nu+1)b_{\mu,\nu+1}^{(i)}\ud T_1)T_0^\mu T_1^\nu.
\end{align*}
The terms of $\th(\overline{\om})$ involving $\ud T_0\wedge\ud T_1$ are of the form
\begin{multline*}
\pm(\mu_1+1)(\nu_2+1) b_{\mu\nu} b_{\mu_1+1,\nu_1}^{(i)} b_{\mu_2,\nu_2+1}^{(j)} T_0^{\mu+\mu_1+\cdots+\mu_p-s}
T_1^{\nu+\nu_1+\cdots+\nu_p-s} \\
\cdot\ud T_0\wedge\ud T_1\wedge\ud_B b_{\mu_3,\nu_3}^{(j_1)}\wedge\cdots\wedge\ud_B b_{\mu_p,\nu_p}^{(j_{p-2})}
\end{multline*}
with some $0\le i,j,j_1,\ldots,j_{p-2}\le n$ and $\mu,\nu,\mu_1,\nu_1,\ldots,\mu_p,\nu_p\ge 0$.
To get the coefficient $\d_{1,1}$ of $\ud T_0/T_0\wedge\ud T_1/T_1$, we have to consider the case
$\mu+\mu_1+\cdots+\mu_p=s-1$ and $\nu+\nu_1+\cdots+\nu_p=s-1$.
Using that $\ud_B$ is of degree $0$ together with the estimate~\eqref{eq:boundCoeff}, it follows
that $\d_{1,1}$ is of degree
\begin{align*}
 & \le \deg b_{\mu\nu} +\deg b_{\mu_1+1,\nu_1}^{(i)} +\deg b_{\mu_2,\nu_2+1}^{(j)}
+\deg b_{\mu_3,\nu_3}^{(j_1)} + \cdots+ \deg b_{\mu_p,\nu_p}^{(j_{p-2})} \\
 & \le \g^{\mu+\nu}\deg a+\g^{\mu_1+\nu_1+1}+\g^{\mu_2+\nu_2+1}+\g^{\mu_3+\nu_3}+\cdots++\g^{\mu_p+\nu_p} \\
 & \le \g^{2s-1}(\deg a+p).\qedhere
\end{align*}
\end{proof}

\section{Proof of the Main Theorem}
The effective Gysin sequence yields degree and order bounds for the de Rham cohomology of a smooth
hypersurface.
\begin{theorem}\label{thm:effDeRhamHyper}
Let $f,g\in R=k[X_1,\ldots,X_n]$ with $d:=\deg f$ and $d':=\deg g$, such that $f$ is irreducible
and $\partder{f}{X_n}|g$, and consider the smooth hypersurface
$V:=\mZ(f)\setminus\mZ(g)\subseteq\A^n$. Then we have
$$
(p+2)(d+d'+2)(2d'-d+3)^{2p+3}\cdot (1,1)\in B(\Hdr^{p}(V)).
$$
for all $p\in\N$.
\end{theorem}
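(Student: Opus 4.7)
The strategy is to realize $V$ as a smooth codimension-$2$ complete intersection and combine Corollary~\ref{cor:ResIso}, Lemma~\ref{lem:deRhamLocal}, Corollary~\ref{cor:boundLocalCoh}, and Theorem~\ref{thm:effGysin}. Set $f_0:=gX_0-1$ and $f_1:=f$ in $A:=k[X_0,\ldots,X_n]$, let $Z:=\mZ(f_0,f_1)\subseteq\A^{n+1}$, $B:=k[Z]$, and $W:=\A^{n+1}\setminus Z$. The hypothesis $\partder{f}{X_n}\mid g$ gives $\partder{f_0}{X_0}=g=h\,\partder{f_1}{X_n}$ for some $h\in R$, which is precisely the divisibility condition used in Theorem~\ref{thm:effGysin}; with $d_0=d'+1$, $d_1=d$, we have $2d_0-d_1+1=2d'-d+3$. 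The projection $Z\to V$, $(x_0,x_1,\ldots,x_n)\mapsto(x_1,\ldots,x_n)$, is an isomorphism, inducing $B\simeq k[V]$ via $X_0\mapsto 1/g$.

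Corollary~\ref{cor:ResIso} provides the residue isomorphism $\Res\colon\Hdr^{p+3}(W)\stackrel{\simeq}{\to}\Hdr^p(Z)\simeq\Hdr^p(V)$, while Lemma~\ref{lem:deRhamLocal} (with $r=1$) identifies
\[
\Hdr^{p+3}(W)\simeq H^{p+2}\!\left(\Om_{A_{f_0f_1}}^\bullet/(\Om_{A_{f_0}}^\bullet+\Om_{A_{f_1}}^\bullet)\right).
\]
Hence every class of $\Hdr^p(V)$ has the form $\Res([\om])$ for some $\om\in\Om_{A_{f_0f_1}}^{p+2}$, and by Corollary~\ref{cor:boundLocalCoh} we may take $\om\in F^{p+2,p+2}\Om_{A_{f_0f_1}}^{p+2}$. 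Unfolding the standard double filtration (with $\deg(f_0f_1)=d+d'+1$), this is the same as writing $\om=\a/(f_0f_1)^{p+2}$ with $\deg\a\le(p+2)(d+d'+2)$.

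Applying Theorem~\ref{thm:effGysin} (with its parameter ``$p$'' equal to $p+2$ and $s=p+2$) gives
\[
\deg\Res(\om)\le(2d'-d+3)^{2(p+2)-1}\deg\a\le M,\qquad M:=(p+2)(d+d'+2)(2d'-d+3)^{2p+3},
\]
so $\Res(\om)\in\Om_B^p$ has total form-degree at most $M$. It remains to transfer $\Res(\om)$ along the isomorphism $B\simeq k[V]$, $X_0\mapsto 1/g$. An element $h\in B$ of degree at most $M-p$ lies, after substitution, in $F^{M-p,M-p}k[V]$: the highest power of $X_0$ is at most $M-p$, and the common denominator $g^{M-p}$ yields numerator degree at most $(M-p)(1+d')$, hence net degree at most $M-p$. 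Differentials $\ud X_i$ for $i\ge 1$ pull back unchanged, while $\ud X_0$ pulls back to $-\ud g/g^2\in F^{2,-d'}\Om_{k[V]}^1$. A term-by-term check shows each summand lies in $F^{M,M}\Om_{k[V]}^p$, yielding $M\cdot(1,1)\in B(\Hdr^p(V))$.

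The hard part is the last transfer step: the substitution $\ud X_0\mapsto -\ud g/g^2$ simultaneously adds two units of order and subtracts $d'+1$ from the net degree, and this must be balanced against the $+p$ contributed by the remaining differentials and the $-(M-p)$ margin already available from the translation of coefficients. The polynomial factor $(p+2)(d+d'+2)$ in $M$ is precisely what absorbs this overhead, so that both the order and the net degree end up bounded by the same value $M$; the exponential factor $(2d'-d+3)^{2p+3}$ arises entirely from Theorem~\ref{thm:effGysin}.
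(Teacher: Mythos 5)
Your proof is correct and follows essentially the same route as the paper: realize $V$ as the codimension-$2$ complete intersection $Z=\mZ(gX_0-1,f)\subseteq\A^{n+1}$, represent classes of $\Hdr^{p+3}(W)$ via Lemma~\ref{lem:deRhamLocal} and Corollary~\ref{cor:boundLocalCoh} by forms $\a/(f_0f_1)^{p+2}$ with $\deg\a\le(p+2)(d+d'+2)$, and feed these into Theorem~\ref{thm:effGysin} to obtain exactly the stated bound. You even spell out the final transfer along $X_0\mapsto 1/g$, a step the paper dispatches in one line as ``the pull-back of differential forms.''
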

\begin{proof}
Putting $f_0:=gX_0-1$, $f_1:=f$, we have the isomorphism
$$
V\stackrel{\simeq}{\longrightarrow} Z:=\mZ(f_0,f_1)\subseteq\A^{n+1},\ x\mapsto (1/g(x),x),
$$
and the pull-back of differential forms shows
$$
\deg(\Hdr^p(Z))\cdot (1,1)\in B(\Hdr^p(V)).
$$
Thus, we have reduced to the setting of Theorem~\ref{thm:effGysin}. Note that $d_0:=\deg f_0=d'+1$
and $d_1:=\deg f_1=d$. By Corollary~\ref{cor:boundLocalCoh} we have
$$
p\cdot (1,1)\in B\left( H^{p}\left(\frac{\Om_{A_{f_0f_1}}^\bullet}{\Om_{A_{f_0}}^\bullet+\Om_{A_{f_1}}^\bullet}\right)\right).
$$
Theorem~\ref{thm:effGysin} implies
$$
\deg(\Hdr^{p-2}(Z))\le (2d_0-d_1+1)^{2p-1}p(d_0+d_1+1)=p(d+d'+2)(2d'-d+3)^{2p-1},
$$
which implies the claim.
\end{proof}

\begin{proof}[Proof of Theorem~\ref{thm:main}]
Let $X\subseteq\A^n$ be a smooth closed variety. 
The zeroth cohomology is treated in \S\ref{se:zerothCoh},
and the case $D=1$ is trivial, so we assume $n>m\ge 1$ and $D\ge 2$.

First assume that $X$ is irreducible. Then, by Corollary~\ref{cor:biratHyperCover}
and Lemma~\ref{lem:biratHyper}
we can write $X=\bigcup_{i=0}^m U_i$, where $U_i=X\setminus\mZ(g_i)$ is isomorphic
to $V_i=Y_i\setminus\mZ(g_i)\subseteq\A^{m+1}$. Furthermore, in suitable
coordinates $X_1,\ldots,X_n$ of $\A^n$, the isomorphism is given by the projection
$\pi\colon U_i\to V_i$ onto the first $m+1$ coordinates, and we have $Y_i=\mZ(f_i)$,
where $f_i\in k[X_1,\ldots,X_{m+1}]$ is irreducible and monic in $X_{m+1}$,
$g_i=\partder{f_i}{X_{m+1}}$,
and $\deg f_i\le D$. 
Now fix $1\le \ell\le m$. For $p,q\in\N$ with $p+q=\ell$ consider a multi-index
$0\le i_0<\cdots<i_q\le m$.
Set $i:=i_0$, $U':=U_{i_0\cdots i_q}$, and $V':=\pi(U')$.
Note that $U'=U_i\setminus\mZ(h)$ with $h:=g_{i_1}\cdots g_{i_q}$. From
Corollary~\ref{cor:transHyperPatch} we obtain $H\in k[Y_i]$ with
$\deg H\le D\deg h$ such that $k[V']=k[Y_i]_{g_iH}$. We can assume $d:=\deg f_i\ge 2$,
and with $g:=g_i H$ we have $d-1\le d':=\deg g\le d-1+\deg H\le (D-1)(qD+1)\le \ell D^2+D-2$.
Theorem~\ref{thm:effDeRhamHyper} implies
$$
(p+2)(d+d'+2)(2d'-d+3)^{2p+3}\cdot (1,1)\in B(\Hdr^{p}(V_i)).
$$
Moreover,
\begin{align*}
(p+2) & (d+d'+2)(2d'-d+3)^{2p+3} \\
  & \le (p+2)\big(D+(D-1)(qD+1)\big)\big(2(D-1)(qD+1)+1\big)^{2p+3} \\
  & \le (p+2)\big(\ell D^2+2D\big)\big(2D(q(D-1)+1)\big)^{2p+3} \\
  & \le 6\ell^2 D^2(2\ell D^2)^{2\ell+3} = 3\cdot 2^{2\ell+4}\ell^{2\ell+5} D^{4\ell+8}.
\end{align*}
Using Corollary~\ref{cor:transHyperPatch} we conclude
\begin{equation}\label{eq:boundSD}
(s,d):=3\cdot 2^{2\ell+4}\ell^{2\ell+5} D^{4\ell+8}\cdot (\ell D,1)\in B(\Hdr^p(U_{i_0\cdots i_q})),
\end{equation}
Since this holds for all $p+q=\ell$ and all $i_0<\cdots <i_q$,
Theorem~\ref{thm:effHyperCoh} shows that
\begin{align*}
\deg \Hdr^\ell(X) & \le d+2D(\ell+1)(s+\ell)^m (D-1)^m \\
 & \le 2D(\ell+1)(\ell D d+\ell)^m D^m \\
 & \le 4\ell^{m+1} (D d+1)^m D^{m+1} \\
 & \stackrel{\eqref{eq:boundSD}}{\le} 4\ell^{m+1} (2^{2\ell+6}\ell^{2\ell+5} D^{4\ell+9})^m D^{m+1} \\
 & = 2^{2\ell m+6m+2}\ell^{2\ell m+6m+1} D^{4\ell m+10m+1}.
\end{align*}
Finally, for reducible $X$, Corollary~\ref{cor:redIrred} implies the claim.
\end{proof}

\subsection*{Acknowledgements}
The author thanks Sergei Yakovenko for asking the question addressed in this
paper and bringing to his attention the solution of the infinitesimal Hilbert
16th problem~\cite{bny:10}.
He also thanks Cornelia Rottner for finding some inaccuracies in a previous version. Example~\ref{ex:degLoc} is due to her.
He is also grateful to the Hausdorff Center for Mathematics, Bonn, as well as the Lucerne University of
Applied Sciences and Arts for their kind support. 


\end{document}